\newcommand{\R}{\mathbb{R}} % reelle Zahlen
\newcommand{\C}{\mathbb{C}} % komplexe zahlen
\newcommand{\N}{\mathbb{N}} % natärliche Zahlen
\newcommand{\Z}{\mathbb{Z}} % ganze Zahlen
\newcommand{\Sp}{\mathbb{S}} % Sphäre
\newcommand{\SPN}{\Sp^{N-1}}
\newcommand{\cC}{{\mathcal C}}
\newcommand{\cF}{{\mathcal F}}
\newcommand{\cR}{{\mathcal R}}   
\newcommand{\cS}{{\mathcal S}}
\newcommand{\wdh}{\widehat}
\newcommand{\wdc}{\check}
\newcommand{\intl}{\int\limits}
\newcommand{\eps}{\varepsilon}
\newcommand{\RN}{\R^{N}}
\newcommand{\ri}{\text{i}}
\newcommand{\ceinftyc}{\mathcal{C}^{\infty}_{c}}   %Cunendlich mit kmp-Träger
\newcommand{\SR}{\mathcal{S}}                      %Schwarzraum
\newcommand{\sQ}{\text{\tiny $Q$}}
\DeclareMathOperator*{\id}{id}
\newtheorem{defi}{Definition}[section]
\newtheorem{theorem}{Theorem}[section] 
\newtheorem{remark}[theorem]{Remark}
\newtheorem{lemma}[theorem]{Lemma}
\newtheorem{proposition}[theorem]{Proposition}
\newtheorem{corr}[theorem]{Corollary}
\numberwithin{equation}{section}
\title[Fourier-extension estimates for symmetric functions]{Fourier-extension estimates for symmetric functions and applications to nonlinear Helmholtz equations}
\author{Tobias Weth and Tolga Ye\c{s}il} 
\date{}
\thanks{Goethe-Universit\"{a}t Frankfurt, Institut f\"{u}r Mathematik, Robert-Mayer-Str. 10,
   D-60629 Frankfurt, Germany\\
Email addresses: weth@math.uni-frankfurt.de, yesil@math.uni-frankfurt.de}
 \address{Goethe-Universit\"{a}t Frankfurt, Institut f\"{u}r Mathematik, Robert-Mayer-Str. 10, D-60629 Frankfurt, Germany }
 \email{weth@math.uni-frankfurt.de, yesil@math.uni-frankfurt.de}
\begin{document}
\maketitle

\begin{abstract}
  We establish weighted $L^p$-Fourier-extension estimates for $O(N-k) \times O(k)$-invariant functions defined on the unit sphere $\Sp^{N-1}$, allowing for exponents $p$ below the Stein-Tomas critical exponent $\frac{2(N+1)}{N-1}$. Moreover, in the more general setting of an arbitrary closed subgroup $G \subset O(N)$ and $G$-invariant functions, we study the implications of weighted Fourier-extension estimates with regard to boundedness and nonvanishing properties of the corresponding weighted Helmholtz resolvent operator. Finally, we use these properties to derive new existence results for $G$-invariant solutions to the nonlinear Helmholtz equation
  $$
- \Delta u - u = Q(x)|u|^{p-2}u, \quad u \in W^{2,p}(\RN),  
$$
where $Q$ is a nonnegative bounded and $G$-invariant weight function. 
\end{abstract}

\section{Introduction}
Starting with the pioneering work of Stein (cf. \cite{Stein}), Tomas \cite{Tomas1975} and Strichartz \cite{Strichartz}, Fourier restriction and extension estimates have been receiving extensive attention due to their various applications, especially to partial differential equations. For an overview on classical results and recent progress, we refer the reader to e.g. \cite{Foschi2017,Stein-K,Tao2004}. In its classical form, the famous Fourier extension theorem of Stein and Tomas (see e.g. \cite[\S8: Corollary 5.4]{Stein-K}) states that 
the inverse Fourier transform $\wdc{F}_{\sigma}$ of $F \in L^{2}(\Sp^{N-1})$, given by
\begin{equation*}\label{eqn:Intro}
\wdc{F}_{\sigma} (x)  =(2\pi)^{-\frac{N}{2}}\int\limits_{\Sp^{N-1}} e^{i\omega\cdot x}F(\omega)~d\sigma(\omega) 
\end{equation*}
belongs to $L^{q}(\RN)$ for $N \geq 2$ if $q \geq \frac{2(N+1)}{N-1}$, and that
\begin{equation}\label{eqn:intro}
\| \wdc{F}_{\sigma} \|_{L^{q}(\RN)} \leq C \left\|F\right\|_{L^{2}(\Sp^{N-1})}
\end{equation}
with a constant $C>0$ depending only on $q$ and $N$. Here $\Sp^{N-1}$ denotes the $(N-1)-$dimensional sphere in $\RN$ and $d\sigma$ the induced Lebesgue measure on  $\Sp^{N-1}$. Due to the Knapp example given by a characteristic function of a small spherical cap in $\Sp^{N-1}$, this range of exponents is known to be sharp for arbitrary functions, see e.g \cite[Chapter 4]{Tao2004}. On the other hand, it is a natural question whether the range of exponents can be improved both by considering weighted $L^q$-norms and by restricting to functions having additional symmetries. A well known and classical observation in this context 
yields that case \eqref{eqn:intro} holds for $q > \frac{2N}{N-1}$ and {\em radial} (and thus constant) functions $F \in L^{2}(\Sp^{N-1}) $, see e.g. \cite[\S8: Proposition 5.1]{Stein-K}.

In the present paper, we analyze this question for more general symmetries with respect to closed subgroups of $O(N)$.

For this we introduce the following definition. 

\begin{defi}
  \label{sec:introduction-definition}
Let $q \ge 1$, let $G \subset O(N)$ be  closed subgroup, and let $Q: \R^N \to \C$ be a measurable function. We call $(G,q,Q)$ an admissible extension triple if there exists a constant $C>0$ with 
\begin{equation}\label{eqn:intro-defi}
\|Q \wdc{F}_{\sigma} \|_{L^{q}(\RN)} \leq C \left\|F\right\|_{L^{2}(\Sp^{N-1})} \qquad \text{for every $G$-invariant function $F \in L^2(\Sp^{N-1})$.}
\end{equation} 
\end{defi}
Here and in the following, a function $F \in L^2(\Sp^{N-1})$ is called $G$-invariant if $F(A \theta)= F(\theta)$ for every $\theta \in \Sp^{N-1}$, $A \in G$. By the remarks above, $(\{\id\},q,1)$ is an admissible extension triple if $q \ge \frac{2(N+1)}{N-1}$ and 
$(O(N),q,1)$ is an admissible extension triple if $q> \frac{2N}{N-1}$. As a further specific example, we mention the subgroup $O(N-1) \cong O(N-1) \times \{\id_\R\} \subset O(N)$ which corresponds to axial symmetry with respect to a fixed axis in $\R^N$. Since a characteristic function of a small spherical cap in $\Sp^{N-1}$ -- as considered in Knapp's example mentioned above -- is axially symmetric, the range for $q$ with $(O(N-1),q,1)$ being an admissible extension triple cannot be extended beyond the value $\frac{2(N+1)}{N-1}$.

If, on the other hand, we consider weight functions $Q \in L^s(\R^N)$ for suitable $s < \infty$, then the range of exponents giving rise to admissible extension triples can be readily extended by applying Hölder's inequality to the LHS of (\ref{eqn:intro-defi}). In particular, this yields that $(\{\id\},q,Q)$ is an 
admissible extension triple if $Q \in L^s(\R^N)$ for some $s \in [1,\infty)$ and $q \ge \max\bigl\{\frac{2s(N+1)}{2(N+1)+s(N-1)},1\bigr\}$. Moreover, $(O(N),q,Q)$ is an admissible extension triple if $Q \in L^s(\R^N)$ for some $s \in [1,\infty)$ and $q \ge \max\bigl\{\frac{2sN}{2N+s(N-1)},1\}$.

%
%, which implies that
%\begin{equation}
%  \label{eq:hoelder-intro}
%\|Q h\|_{q}\le \|Q\|_s \|h\|_t \qquad \text{for $t \in [\frac{s}{s-1},\infty)$ %and $h \in L^t(\R^N)$ with $q= \frac{ts}{t+s}$}.
%\end{equation}
%Here and in the following, we simply write $\|\cdot\|_r$ to denote the $L^r$-no%rm on $\R^N$. Using (\ref{eq:hoelder-intro}) for $t = \frac{2(N+1)}{N-1}$, 

In the present paper, we are interested in weight functions $Q \in L^\infty(\R^N)$, where Hölder's inequality does not yield an extended range of admissible exponents. The main aims of the paper are the following. First, we wish to detect a class of admissible extension triples corresponding to nontrivial subgroups of $O(N)$ and corresponding to functions $Q \in L^\infty(\R^N)$ which are not $s$-integrable for any $s < \infty$. Second, starting from a range of admissible extension triples $(G,q,Q)$, we wish to derive selfdual $(L^{p'},L^p)$-estimates for the restriction of mappings of the form 
$$
f \mapsto \cR_\sQ f := Q \cR (Q f)
$$
to $G$-invariant functions in the Schwartz space $\cS$ of rapidly decreasing functions in $\R^N$. Here $\cR$ denotes the standard Helmholtz resolvent defined by $\cR f = \Phi \ast f$, where 
\begin{equation}\label{fundsol}
\Phi(x):= \frac{\ri}{4}(2\pi|x|)^{\frac{2-N}{2}}H^{(1)}_{\frac{N-2}{2}}(|x|), \quad\text{for }x\in\RN\backslash\{0\},
\end{equation}
is the fundamental solution of the Helmholtz operator associated with Sommerfeld's 
outgoing radiation condition $\partial_ru(x)-\ri u(x)=o(|x|^{\frac{1-N}{2}})$, as $|x|\to\infty$.
Here $H^{(1)}_{\frac{N-2}{2}}$ denotes the Hankel function of the first kind of order $\frac{N-2}{2}$. Moreover, we wish to derive corresponding nonvanishing results in the spirit of \cite[Theorem 3.1]{Evequoz2015}. Finally, we wish to deduce existence results for real-valued $G$-invariant solutions of nonlinear Helmholtz equations of the form 
\begin{equation}\label{eqn:intro_helmholtz}
- \Delta u - u = Q(x)|u|^{p-2}u, \quad u \in W^{2,p}(\RN).
\end{equation}
With regard to our first aim, we focus our attention to the subgroups 
\begin{equation}
  \label{eq:def-G-k}
G_k:= O(N-k) \times O(k)\; \subset \;O(N) \qquad \text{for $k = 1,\dots,N-1$.}
\end{equation}
Moreover, we consider weight functions of the form $Q_\alpha = \mathds{1}_{L_\alpha}$ for the set
\begin{equation}
\label{eqn:L}
L_\alpha:= \{x = (x^{(N-k)},x^{(k)}) \in \R^{N-k} \times \R^{k} : |x^{(N-k)}| \leq a |x^{(k)}|^{-\alpha} \}, 
\end{equation}
where $a>0$ is an arbitrary fixed number and $\alpha>0$. Since $|L_\alpha|= \infty$, we have $Q_\alpha \not \in L^s(\R^N)$ for any $\alpha>0$, $s <\infty$. 

\begin{theorem}\label{theo:intro1-alpha-equals-beta}
  Let $N \ge 3$, $k \in \{1,\dots,N-1\}$, let $\alpha>0$, and let $Q_\alpha=\mathds{1}_{L_\alpha}$ with $L_\alpha$ given in \eqref{eqn:L}. Moreover, suppose that
  \begin{equation}
    \label{eq:extra-assumption-1-N-1}
\alpha> \frac{1}{N-1} \quad \text{if $k=1$,}\qquad \qquad \qquad 
\alpha< N-1 \quad \text{if $k=N-1$},
  \end{equation}
and let
  \begin{equation}
    \label{eq:def-lambda-n-k-alpha}
    \lambda_{N,k,\alpha}:= \left\{
      \begin{aligned}
      &\frac{2(N-1) - \frac{2}{\alpha}}{N-2},&&\qquad \text{if $k=1$;}\\  
      &\max \left\lbrace \frac{2(N-k)- \frac{2k}{\alpha}}{N-k-1},\frac{2k-2\alpha(N-k)}{k-1}\right\rbrace &&\qquad \text{if $2 \le k \le N-2$;}\\
     &\frac{2(N-1) - 2\alpha}{N-2},&& \qquad \text{if $k = N-1$.}
    \end{aligned}
  \right.
\end{equation}
Then $(G_k,q,Q_\alpha)$ is an admissible extension triple for every $q > \lambda_{N,k,\alpha}$.
      \end{theorem}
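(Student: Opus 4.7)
The plan is to exploit the $G_k$-invariance to reduce the $(N-1)$-dimensional Fourier extension problem to a family of one-dimensional oscillatory integrals over $[0,\pi/2]$, and then combine a two-dimensional Stein--Tomas estimate for the circle with a weighted integration against $\mathds{1}_{L_\alpha}$.

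\textbf{Reduction.} Writing $\omega = (\sin\theta\,\eta,\cos\theta\,\xi) \in \Sp^{N-1}$ with $\theta\in[0,\pi/2]$, $\eta\in\Sp^{N-k-1}$, $\xi\in\Sp^{k-1}$, a $G_k$-invariant $F \in L^2(\Sp^{N-1})$ corresponds to a function $g(\theta)$ with
\[
\|F\|_{L^2(\Sp^{N-1})}^2 \sim \int_0^{\pi/2}|g(\theta)|^2(\sin\theta)^{N-k-1}(\cos\theta)^{k-1}\,d\theta.
\]
Setting $y=|x^{(N-k)}|$ and $z=|x^{(k)}|$ and integrating over $\eta,\xi$ first yields
\[
\wdc{F}_\sigma(x) = c\int_0^{\pi/2} g(\theta)\,\Phi_{N-k}(y\sin\theta)\,\Phi_k(z\cos\theta)\,(\sin\theta)^{N-k-1}(\cos\theta)^{k-1}\,d\theta,
\]
where $\Phi_m(t)=c_m t^{-(m-2)/2}J_{(m-2)/2}(t)$ is the Fourier transform of the normalized surface measure on $\Sp^{m-1}$, with $|\Phi_m(t)|\lesssim \min\{1,t^{-(m-1)/2}\}$ and the oscillatory expansion $\Phi_m(t) = t^{-(m-1)/2}\sum_{\pm} c_m^{\pm}e^{\pm it}+O(t^{-(m+1)/2})$ as $t\to\infty$.

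\textbf{Main term.} On the bi-large region $\{y\sin\theta\gtrsim 1 \text{ and } z\cos\theta\gtrsim 1\}$, inserting these asymptotics produces, as leading contribution, four expressions of the form
\[
I_{\pm\pm}(y,z) = y^{-(N-k-1)/2}z^{-(k-1)/2}\int_0^{\pi/2}h(\theta)\,e^{i(\pm y\sin\theta\pm z\cos\theta)}\,d\theta,
\]
with $h(\theta)= g(\theta)(\sin\theta)^{(N-k-1)/2}(\cos\theta)^{(k-1)/2}$ and $\|h\|_{L^2(d\theta)}\lesssim \|F\|_{L^2(\Sp^{N-1})}$. The oscillatory integral in $I_{\pm\pm}$ is precisely the Fourier transform of the finite measure $h\,d\sigma_{\text{arc}}$ on the arc $\{(\sin\theta,\cos\theta):\theta\in[0,\pi/2]\}\subset\R^2$, so the classical Stein--Tomas theorem in $\R^2$ applied to this arc yields a uniform $L^6(\R^2_{y,z})$ bound of order $\|h\|_{L^2}$. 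Interpolating with the trivial $L^\infty$ bound produces $L^\gamma$ estimates for every $\gamma\in[6,\infty]$. H\"older's inequality then reduces the weighted $L^q$-bound of $I_{\pm\pm}$ integrated against the Jacobian $y^{N-k-1}z^{k-1}$ over $L_\alpha$ to an integrability condition on a pure power weight over $L_\alpha\cap\{y,z\gtrsim 1\}$; a direct computation in $(y,z)$-coordinates shows that this condition is satisfied precisely when $q>\lambda_{N,k,\alpha}$. For $2\leq k\leq N-2$ the two competing expressions in the maximum correspond respectively to the asymptotic tails $z\to\infty$, $y\lesssim z^{-\alpha}$ and $y\to\infty$, $z\lesssim y^{-1/\alpha}$ of $L_\alpha$; in the boundary cases $k\in\{1,N-1\}$ only one nontrivial tail is present, and \eqref{eq:extra-assumption-1-N-1} is exactly the condition needed for integrability up to the degenerate axis.

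\textbf{Remainders and main difficulty.} The complementary low-frequency regions $\{y\sin\theta\lesssim 1\}$ and $\{z\cos\theta\lesssim 1\}$ are treated using the uniform bound $|\Phi_m|\lesssim 1$ together with Cauchy--Schwarz in $\theta$ and the thin geometry of $L_\alpha$ near the corresponding coordinate axis. The central difficulty lies in arranging the region decomposition and the H\"older split between the 2D Stein--Tomas factor and the power-weight factor so that (i) no remainder contribution dominates at the critical exponent, and (ii) both competing thresholds in \eqref{eq:def-lambda-n-k-alpha} emerge sharply. Careful tracking of the Jacobian $y^{N-k-1}z^{k-1}$, the Bessel decay $(yz)^{-\cdot}$ and the precise shape of $L_\alpha$ is what ultimately produces the piecewise definition of $\lambda_{N,k,\alpha}$.
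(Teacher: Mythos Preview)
There is a structural gap rooted in the geometry of $L_\alpha$. With $y=|x^{(N-k)}|$ and $z=|x^{(k)}|$, the constraint $y\le a z^{-\alpha}$ forces $z\le a^{1/\alpha}$ whenever $y\ge 1$, and $y\le a$ whenever $z\ge 1$; hence $L_\alpha\cap\{y\gtrsim 1,\ z\gtrsim 1\}$ is \emph{bounded}. Your bi-large region $\{y\sin\theta\gtrsim 1\text{ and }z\cos\theta\gtrsim 1\}$ --- the only place where both Bessel asymptotics are valid and where the representation $I_{\pm\pm}(y,z)=y^{-(N-k-1)/2}z^{-(k-1)/2}\cdot(\text{arc extension})$ holds --- therefore projects into a compact subset of $L_\alpha$. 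Any weighted $L^q$-norm over a compact set is finite for every $q$, so your claim that the integrability of a pure power weight over $L_\alpha\cap\{y,z\gtrsim 1\}$ ``is satisfied precisely when $q>\lambda_{N,k,\alpha}$'' cannot be correct: no threshold can emerge from a bounded region. You effectively concede this two sentences later, when you (correctly) attribute the two branches of the maximum to the tails $z\to\infty,\ y\lesssim z^{-\alpha}\to 0$ and $y\to\infty,\ z\lesssim y^{-1/\alpha}\to 0$. But both tails lie entirely in your \emph{remainder} regions (one Bessel argument is small there), and the 2D Stein--Tomas estimate never touches them.

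Consequently the entire content of the theorem lives in what you handle in a single line (``$|\Phi_m|\lesssim 1$, Cauchy--Schwarz in $\theta$, thin geometry''). This is precisely the paper's argument, and it is not a side remark but the whole proof: one applies Cauchy--Schwarz in the arc parameter \emph{immediately}, using no oscillatory cancellation beyond the Bessel decay, to obtain the pointwise bound
\[
|\wdc{F}_\sigma(x)|\ \lesssim\ \|F\|_{L^2(\Sp^{N-1})}\Bigl(\int_0^1 r^{N-k-1}(1-r^2)^{\frac{k-2}{2}}\,|\wdc{d\sigma}_{N-k}(ry)|^2\,|\wdc{d\sigma}_k(\sqrt{1-r^2}\,z)|^2\,dr\Bigr)^{1/2},
\]
and then integrates its $q$-th power over $L_\alpha$ using only $|\wdc{d\sigma}_m(t)|\lesssim(1+|t|)^{(1-m)/2}$. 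The exponent $\lambda_{N,k,\alpha}$ and the extra hypotheses \eqref{eq:extra-assumption-1-N-1} emerge from a case analysis of the resulting iterated integral (splitting at $s=\sqrt{1-r^2}$ and according to the sign of $q(N-k-1)-2(N-k)$), which is where the real work lies. Your 2D Stein--Tomas step is a detour that never reaches the part of $L_\alpha$ on which the threshold is decided.
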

      We note that, in Theorem~\ref{theo:alpha-diff-beta-section} below, we shall in fact prove a generalization of this result for characteristic functions of sets of the form $L_{\alpha,\beta}:= \{x \in \R^N : |x^{(N-k)}| \leq a \max \{|x^{(k)}|^{-\alpha},|x^{(k)}|^{-\beta} \} \}$ with $\alpha \ge \beta>0$. Regarding Theorem~\ref{theo:intro1-alpha-equals-beta}, we note in particular that $\lambda_{N,k,\alpha}=0$ for $\alpha = \frac{k}{N-k}$, so $(G_k,q,Q_\alpha)$ is an admissible extension triple for every $q \ge 1$ in this case if also (\ref{eq:extra-assumption-1-N-1}) is satisfied. More generally, the latter property holds if $\alpha \in (\frac{k+1}{2(N-k)}, \frac{2k}{N-k+1})$, since then we have $\lambda_{N,k,\alpha}<1$. Comparing with the classical Stein-Tomas exponent, we have $\lambda_{N,k,\alpha} < \frac{2(N+1)}{N-1}$ if
     $$
     \begin{aligned}
       &k\le \frac{N-1}{2},&&\qquad \alpha \in \bigl( \frac{N+1-2k}{(N-k)(N-1)},\infty \bigr)\qquad \text{or}\\
       &\frac{N-1}{2} < k < \frac{N+1}{2},&&\qquad \alpha \in \bigl( \frac{N+1-2k}{(N-k)(N-1)},\frac{k(N-1)}{2k-(N-1)} \bigr)\qquad \text{or}\\
      &k \ge \frac{N+1}{2},&&\qquad \alpha \in \bigl(0,\frac{k(N-1)}{2k-(N-1)}\bigr).
     \end{aligned}
$$

\begin{figure}[h]
   \centering
     	\includegraphics[width=0.57\textwidth]{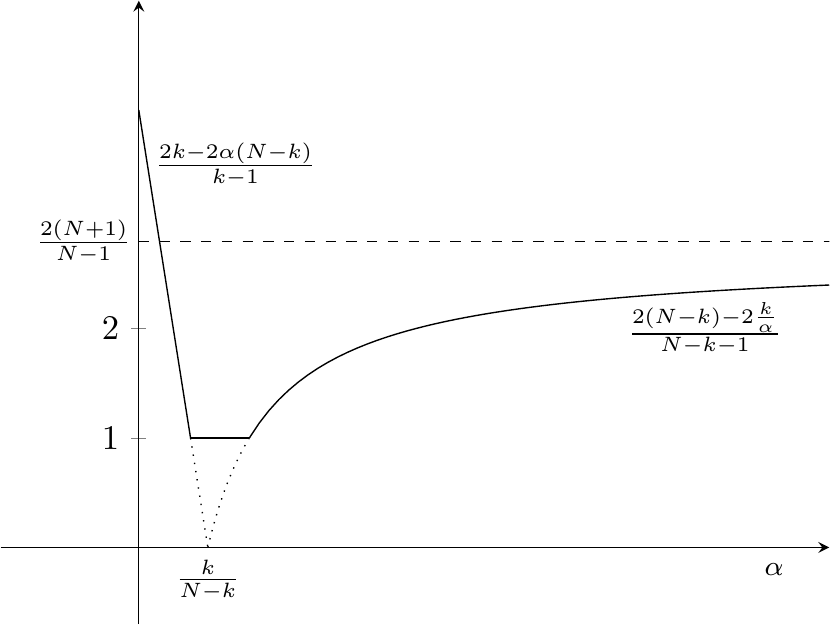}
     	\caption{Possible values of $q$  for $N=6$ and $k=2$ depending on $\alpha$.}
     \end{figure} 
     
%We want to point out that
%      $$ ~~~~~~\text{if } k=1 \quad~~~~~ \text {and } \alpha < \frac{1}{N-1}, \%text{ then } ~\lambda_{N,k,\alpha} = \frac{2(N-k)- \frac{2k}{\alpha}}{N-k-1}>0>% \frac{2k-2\alpha(N-k)}{k-1}$$ 
%      and
%      $$~~~~\text{if } k=N-1 \quad \text{and } \alpha < N-1, \text{ then } \lam%bda_{N,k,\alpha} = \frac{2k-2\alpha(N-k)}{k-1} > 0 > \frac{2(N-k)- \frac{2k}{\a%lpha}}{N-k-1}.$$

       The main part of the proof of this Theorem consists in a detailed asymptotic study of one-dimensional integrals which arise after integrating along the orbits of $G_k$. Here, the well-known bound 
$$
|\wdc{d\sigma_k}(x)| \leq C (1+|x|)^{\frac{1-k}{2}}, \qquad x \in \R^k
$$
for the Fourier transform of the standard measure $d\sigma_k$ on $\Sp^{k-1}$ will play a key role (see e.g. \cite[\S8: Theorem 3.1]{Stein-K}). 

We also remark that, if $(G,q,Q)$ is an admissible extension triple and $Q':\R^N \to \C$ is a measurable function with $|Q'| \le |Q|$ in $\R^N$, then, by definition, $(G,q,Q')$ is also an admissible extension triple. Consequently, the statement of Theorem~\ref{theo:intro1-alpha-equals-beta} extends to functions $Q \in L^\infty(\R^N)$ with $|Q| \le c \mathds{1}_{L_\alpha}$ in $\R^N$ for some $c>0$. 

Next we state our main result on $(L^{p'},L^p)$-Helmholtz resolvent estimates for $G$-invariant functions. Here and in the following, for $r \in [1,\infty]$, we let $L^r_G(\R^N)$ denotes the closed subspace of $G$-invariant functions in $L^r(\R^N)$.

\begin{theorem}
\label{resolvent-estimates-intro}
Let $N \ge 3$, let $G \subset O(N)$ be a closed subgroup, let $Q \in L^\infty_G(\R^N)$, and let $q \in [1,\frac{2(N+1)}{N-1}]$ be such that $(G,q,Q)$ is an admissible extension triple. Then for every $p \in \bigl(\frac{2N}{N-1} \frac{2 q}{q+ 2}\,,\, \frac{2N}{N-2}\bigr]$ there exists a constant $C>0$ such that
\begin{equation}\label{eqn:resolventestimate-intro}
\left\| \cR_{\sQ}(f) \right\|_{L^{p}} \leq C \| f \|_{L^{p'}} \qquad \text{for every $f \in \cS_{G}$.}
\end{equation}
Here and in the following, $\cS_G \subset \cS$ denotes the subspace of $G$-invariant functions in the Schwartz space $\cS$.
\end{theorem}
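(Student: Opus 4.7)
The plan combines a $TT^{*}$-duality argument based on admissibility with the classical Kenig-Ruiz-Sogge Sobolev endpoint, tying them together by complex interpolation. First, I would split the resolvent via the Plemelj-Sokhotski identity
\[
\frac{1}{|\xi|^{2}-1-\ri 0}=\mathrm{PV}\,\frac{1}{|\xi|^{2}-1}+\ri\pi\,\delta(|\xi|^{2}-1),
\]
writing $\cR=\cR_{1}+\ri\cR_{2}$. Up to a normalizing constant, $\cR_{2}$ is exactly the spherical Fourier extension of the restriction to $\SPN$: $\cR_{2}f(x)=c_{N}(\wdh f|_{\SPN})^{\vee}_{\sigma}(x)$. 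Because $G\subset O(N)$ commutes with the Fourier transform, $\wdh f|_{\SPN}$ lies in the $G$-invariant subspace of $\L2s$ whenever $f\in\cS_{G}$, so admissibility is applicable.

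Next, a $TT^{*}$-duality argument supplies the admissibility endpoint. The adjoint of the operator $F\mapsto Q\wdc F_{\sigma}$ from $L^{2}_{G}(\SPN)$ to $L^{q}(\RN)$ is, up to a constant, the map $g\mapsto \widehat{\bar{Q} g}|_{\SPN}$ post-composed with $L^{2}$-projection onto $L^{2}_{G}(\SPN)$, and composing yields
\[
\|Q\cR_{2}(Qf)\|_{L^{q}(\RN)}\leq C\|f\|_{L^{q'}(\RN)}\qquad\text{for every }f\in\cS_{G}.
\]
The principal-value piece $\cR_{1}$ is not a pure extension operator, but it can be handled by a dyadic Littlewood-Paley decomposition in frequency near the characteristic sphere $|\xi|=1$. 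Writing $\cR=\cR_{\mathrm{far}}+\sum_{j\geq 0}T_{j}$ with $T_{j}$ having multiplier supported in $\{\,||\xi|-1|\sim 2^{-j}\,\}$, each $T_{j}$ is -- after rescaling in the normal direction -- a thickened spherical extension/restriction operator, to which a quantitative version of admissibility applies with sharp dependence on the shell thickness. Carrying the multiplier size $\sim 2^{j}$ through and summing the resulting geometric series in $j$ produces an $(L^{p'},L^{p})$-bound for the full weighted operator $\cR_{Q}=Q\cR(Q\cdot)$ precisely when the series converges, i.e.\ when $p>\frac{2N}{N-1}\,\frac{2q}{q+2}$. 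This is the origin of the lower endpoint in the theorem.

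The upper endpoint is supplied by the classical Kenig-Ruiz-Sogge uniform resolvent estimate, which gives $\|\cR f\|_{L^{2N/(N-2)}}\leq C\|f\|_{L^{2N/(N+2)}}$ and hence, because $Q\in L^{\infty}$, the same bound for $\cR_{Q}$. Riesz-Thorin complex interpolation between this Sobolev endpoint and the admissibility endpoint (at exponents $(L^{p'},L^{p})$ with $p$ just above $\frac{2N}{N-1}\frac{2q}{q+2}$) then covers the full interval $\bigl(\frac{2N}{N-1}\frac{2q}{q+2},\frac{2N}{N-2}\bigr]$ announced in the theorem.

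The hard part will be the dyadic shell analysis for the principal-value piece: one must transfer the admissibility estimate -- originally formulated on $\SPN$ itself -- to functions whose Fourier transform is concentrated in a shell of thickness $2^{-j}$ around $\SPN$, with a sharp dependence on $j$, and then sum against the $\sim 2^{j}$ multiplier factor tightly enough to recover exactly the threshold $\frac{2N}{N-1}\cdot\frac{2q}{q+2}$ rather than a strictly larger exponent. Once this step is in place, the Sobolev-endpoint bound and the final complex interpolation are routine.
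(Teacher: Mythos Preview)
Your overall strategy—a dyadic decomposition near the unit sphere, the admissibility estimate at one endpoint, a kernel bound at another, and interpolation—is the right framework, but the \emph{frequency}-shell decomposition you propose does not produce a summable series at the claimed threshold. Consider the piece $T_j$ with multiplier $m_j\sim 2^{j}$ supported in $\{\,||\xi|-1|\sim 2^{-j}\}$. Foliating the shell into spheres and using the dual of admissibility plus Plancherel gives $\|Q\,T_j(Qf)\|_{2}\lesssim 2^{j/2}\|f\|_{q'}$, exactly as one wants. The problem is the other endpoint: the $(L^1,L^\infty)$-bound is controlled by $\|K_j\|_\infty$ with $K_j=(m_j)^{\vee}$, and since $|m_j|\sim 2^{j}$ on a region of volume $\sim 2^{-j}$ one only gets $\|K_j\|_\infty\lesssim\|m_j\|_1\sim 1$, \emph{uniformly in $j$}. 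Interpolating a $2^{j/2}$ bound against a constant bound never produces a $j$-decaying $(L^{p'},L^p)$-constant, so the series $\sum_j$ diverges for every finite $p$. Your ``rescaling in the normal direction'' does not rescue this: it distorts both the extension operator and the weight $Q$ anisotropically, and admissibility---a statement about a fixed $Q$ on $\R^N$---is not stable under such a map.

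The paper avoids this by performing the dyadic decomposition in \emph{physical} space. One first writes $\Phi=\Phi_1+\Phi_2$ with $\widehat{\Phi_1}$ supported near the sphere, so that $|\Phi_1(x)|\le C(1+|x|)^{(1-N)/2}$, and then decomposes $\Phi_1=\sum_j \psi_j\Phi_1$ with $\psi_j$ supported in the spatial annulus $|x|\sim 2^{j}$. Now the $(L^1,L^\infty)$-bound for the $j$-th piece is $\|\psi_j\Phi_1\|_\infty\lesssim 2^{-j(N-1)/2}$, which \emph{does} decay, while the $(L^{q'},L^2)$-bound (via Plancherel and the dual restriction estimate from admissibility) is $\sim\|\psi_j\Phi_1\|_2\sim 2^{j/2}$. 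These two scales are in exactly the ratio $D$ versus $D^{1-N}$ needed for a three-line interpolation, yielding an $(L^{p'},L^p)$-constant $\sim 2^{\frac{j}{2}A_{p,q}}$ with $A_{p,q}=\tfrac{4qN}{p(q+2)}-(N-1)$, summable precisely when $p>\tfrac{2N}{N-1}\cdot\tfrac{2q}{q+2}$. The $\Phi_2$ piece (frequency away from the sphere) satisfies $|\Phi_2(x)|\lesssim\min\{|x|^{2-N},|x|^{-N}\}$ and is handled by Hardy--Littlewood--Sobolev, which supplies the upper restriction $p\le\tfrac{2N}{N-2}$; a final interpolation with the Kenig--Ruiz--Sogge line then covers the full trapezoid of exponents. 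In short, the missing idea in your plan is that the decay needed to sum the dyadic pieces comes from the spatial decay of $\Phi_1$, not from any smallness of the frequency shells.
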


Our proof of Theorem~\ref{resolvent-estimates-intro} is based on the strategy used in \cite{Gutierrez2004} and \cite{kenig1987}, see also \cite{Evequoz2015}. We recall that a selfdual estimate of the form (\ref{eqn:resolventestimate-intro}) has been proved in \cite{kenig1987} for the Helmholtz resolvent $\cR$ in place of $\cR_\sQ$ in the range of exponents $p \in [\frac{2(N+1)}{N-1}, \frac{2N}{N-2}\bigr]$, while corresponding non-selfdual estimates were obtained in \cite{Gutierrez2004}. Clearly, these already available $(L^{r},L^s)$-estimates for $\cR$ extend, by approximation, to the weighted resolvent $\cR_\sQ$ in the case where $Q \in L^\infty(\R^N)$. Theorem~\ref{resolvent-estimates-intro} complements
the selfdual estimate in \cite{kenig1987}, for $\cR_\sQ$ and $G$-invariant functions, in the case where $(G,q,Q)$ is an admissible extension triple for some $q< \frac{2(N+1)}{N-1}$, which is equivalent to the inequality $\frac{2N}{N-1} \frac{2 q}{q+ 2}< \frac{2(N+1)}{N-1}$. In fact, we will prove a non-selfdual generalization of Theorem~\ref{resolvent-estimates-intro} in Theorem~\ref{resolvent-estimates-non-self-dual} below.
 
Under the assumptions of Theorem~\ref{resolvent-estimates-intro}, it follows, 
by density, that the weighted resolvent $\cR_\sQ$ extends to a bounded linear operator $L^{p'}_G(\R^N) \to L^p_G(\R^N)$. In our next result we state that, under the same assumptions, a nonvanishing property in the spirit of \cite[Theorem 3.1]{Evequoz2015} holds. 
\begin{theorem}
\label{nonvanishing-intro}
Let $N \ge 3$, let $G \subset O(N)$ be a closed subgroup, let $Q \in L^\infty_G(\R^N)$, and let $q \in \bigl[1,\frac{2(N+1)}{N-1}\bigr]$ be
such that $(G,q,Q)$ is an admissible extension triple. Moreover, let $p \in \bigl(  \frac{2N}{N-1} \frac{2 q}{q+ 2}, \frac{2N}{N-2}\bigr]$. 
Then for every bounded sequence $(v_{n})_{n} \subset L^{p'}_G(\RN)$ satisfying $\Bigl| \limsup\limits_{n \to\infty} \int\limits_{\RN}v_{n} \cR_{\sQ}(v_{n})~dx\Bigr|>0$, there exist -- after passing to a subsequence -- numbers $R,\zeta>0$ and a sequence of points $(x_{n})_{n \in \N} \subset \RN$ with
\begin{equation*}
\int\limits_{B_{{R}(x_{n})}} |Q v_{n}(x)|^{p'}~dx \geq\zeta, \quad \text{ for all } n.
\end{equation*}
\end{theorem}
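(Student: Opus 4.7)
We argue by contradiction. Set $w_n := Qv_n$; since $Q \in L^\infty_G$, the sequence $(w_n)$ is bounded in $L^{p'}_G(\R^N)$ and
$$
\int_{\R^N} v_n \cR_{\sQ} v_n \,dx = \int_{\R^N} w_n \cR w_n\,dx.
$$
Assume the non-vanishing conclusion fails, so that
$$
\sup_{x \in \R^N} \int_{B_R(x)} |w_n(y)|^{p'}\,dy \to 0 \quad \text{as $n \to \infty$, for every $R > 0$.} \qquad (\ast)
$$
The objective is to show $\int w_n \cR w_n\,dx \to 0$, yielding a contradiction.

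Fix a smooth radial cutoff $\chi \in C_c^\infty(\R^N)$ equal to $1$ in a thin annular neighborhood of $\SPN$ and supported in a slightly wider one. Split $\cR = \cR_1 + \cR_2$ through the Fourier multipliers
$$
m_1(\xi) = \frac{\chi(\xi)}{|\xi|^2-1-i0}, \qquad m_2(\xi) = \frac{1-\chi(\xi)}{|\xi|^2-1}.
$$
The symbol $m_2$ is smooth, bounded, and obeys $|\partial^\alpha m_2(\xi)| \lesssim (1+|\xi|)^{-2-|\alpha|}$, so the convolution kernel of $\cR_2$ splits into a Schwartz piece plus a Bessel-type piece, and $\cR_2$ exhibits two-derivative smoothing. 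A local-compactness argument (truncate $w_n$ to a ball, apply a Rellich-type compact embedding, and use $(\ast)$ to control the far field) then yields $\int w_n \cR_2 w_n\,dx \to 0$.

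For the resonant part, Plancherel gives
$$
\int w_n \cR_1 w_n \,dx = \int_{\R^N}|\hat{w_n}(\xi)|^2\,\frac{\chi(\xi)}{|\xi|^2-1-i0}\,d\xi.
$$
Its imaginary part equals $c_N \|\hat{w_n}|_{\SPN}\|_{L^2(\SPN)}^2$, while dualizing the admissibility inequality $\|Q\wdc F_\sigma\|_{L^q}\le C\|F\|_{L^2(\SPN)}$ for $(G,q,Q)$ yields
$$
\|\hat{w_n}|_{\SPN}\|_{L^2(\SPN)} \le C\|v_n\|_{L^{q'}(\R^N)}.
$$
The real part is a principal-value distribution supported near $\SPN$ and is dominated, via the Stein--Kenig--Ruiz--Sogge complex-interpolation scheme underlying Theorem~\ref{resolvent-estimates-intro}, by an analogous $L^{q'}$-type term together with a $\cR_2$-like contribution already controlled.

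\emph{Main obstacle.} The remaining task is to upgrade the $L^{p'}$-vanishing $(\ast)$ of $w_n = Qv_n$ to an $L^{q'}$-vanishing of $v_n$ itself. We accomplish this by combining $(\ast)$ with the uniform bound $\|\cR_{\sQ}v_n\|_{L^p}\le C$ supplied by Theorem~\ref{resolvent-estimates-intro} in a Lions-type concentration-compactness argument: the strict inequality $p > \frac{2N}{N-1}\cdot\frac{2q}{q+2}$ is exactly what places $q'$ in the interpolation range accessible to the Lions lemma. Once $\|v_n\|_{L^{q'}}\to 0$ is established, the estimates in the previous paragraphs give $\int w_n \cR w_n\,dx \to 0$, contradicting the assumption $\bigl|\limsup_n \int v_n \cR_{\sQ} v_n\,dx\bigr|>0$.
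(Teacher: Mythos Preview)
Your proposal has a genuine gap in the step you flag as the ``main obstacle.'' You claim that the $L^{p'}$-vanishing $(\ast)$ of $w_n=Qv_n$ can be upgraded, via a Lions-type argument and the bound $\|\cR_\sQ v_n\|_{L^p}\le C$, to $\|v_n\|_{L^{q'}}\to 0$. This cannot work. First, $v_n$ is only assumed bounded in $L^{p'}$, and since $Q$ may vanish on sets of infinite measure, the vanishing of $Qv_n$ carries no information about $v_n$ off the support of $Q$; there is no reason $v_n$ even lies in $L^{q'}$. Second, Lions' lemma requires additional compactness (typically a Sobolev-type bound) to pass from local vanishing in one Lebesgue space to global vanishing in another; mere $L^{p'}$-boundedness together with $(\ast)$ yields nothing of the sort. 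The resolvent bound $\|\cR_\sQ v_n\|_{L^p}\le C$ does not supply the missing compactness for $v_n$ itself.

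The paper avoids this issue entirely: it never invokes $\|v_n\|_{q'}$. After the same splitting $\Phi=\Phi_1+\Phi_2$, the treatment of the resonant piece $\Phi_1$ proceeds by a further dyadic decomposition $\Phi_1=\sum_j \psi_j\Phi_1$. The $L^{q'}$-input enters only inside the complex-interpolation step (Lemma~\ref{lem:intpol}), whose \emph{output} is an $L^{p'}\to L^p$ bound with an explicit constant $2^{\frac{j}{2}A_{p,q}}$, $A_{p,q}<0$. Summing over $j>k$ gives a tail bound $O(2^{kA_{p,q}/2})\|v_n\|_{p'}^2$ that is small uniformly in $n$. The remaining compactly supported piece $\eta_k\Phi_1$ is handled by a cube-tiling argument using only $\|\Phi_1\|_\infty<\infty$ and the vanishing $(\ast)$ of $Qv_n$ in $L^{p'}$. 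Thus the whole proof stays in $L^{p'}$; your route through $L^{q'}$ of $v_n$ is both unnecessary and unattainable.
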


In the special (non-symmetric) case $G= \{\id\}$, $Q \equiv 1$, $q= \frac{2(N+1)}{N-1}$, this theorem reduces to \cite[Theorem 3.1]{Evequoz2015}. Here we note that $\frac{2N}{N-1} \frac{2 q}{q+ 2}= \frac{2(N+1)}{N-1}$ if $q= \frac{2(N+1)}{N-1}$. The general strategy of the proof of Theorem~\ref{nonvanishing-intro} is inspired by \cite[Theorem 3.1]{Evequoz2015}. However, additional difficulties, related to the fact that the multiplication with $Q \in L^\infty(\R^N)$ does not map $\cS$ into itself, lead to a somewhat more involved argument.

Theorems~\ref{resolvent-estimates-intro} and \ref{nonvanishing-intro} are useful in the study of real-valued $G$-invariant solutions of the nonlinear Helmholtz equation (\ref{eqn:intro_helmholtz})
with a real-valued weight function $Q \in L^\infty_G(\R^N)$, where $G \subset O(N)$ is a given closed subgroup. In the following, we focus on {\em dual bound state solutions}, which arise
as
solutions $u \in L^p(\R^N)$ of the integral equation $u = R\bigl(Q|u|^{p-2}u\bigr)$, where $R$ is the real part of the resolvent operator $\cR$, see Section~\ref{sec:dual-vari-fram} for details.
Our first main result in this context is the following.

\begin{theorem}
\label{dual-ground-state-intro}
Let $N \ge 3$, let $G \subset O(N)$ be a closed subgroup, and let $Q \in L^\infty_G(\R^N)$ be a real-valued nonnegative function with $Q \not \equiv 0$ and with the property that 
\begin{equation}
  \label{eq:asymptotic-Q-condition}
\|Q\|_{L^1(B_R(x))} \to 0 \qquad \text{as $|x| \to \infty$ for some $R>0$.}   
\end{equation}
Moreover, let $q \in \bigl[1,\frac{2(N+1)}{N-1}\bigr]$, and let $p \in \bigl(\max \bigl\{\frac{2N}{N-1} \frac{2 q}{q+ 2} , 2\bigr \}\,,\, \frac{2N}{N-2}\bigr)$ be such that $(G,q,Q^{\frac{1}{p}})$ is an admissible extension triple. Then (\ref{eqn:intro_helmholtz}) admits a nontrivial $G$-invariant dual bound state solution. 
\end{theorem}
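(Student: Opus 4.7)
The plan is to employ a dual variational principle in the spirit of \cite{Evequoz2015}. The substitution $v := Q^{1/p'}|u|^{p-2}u$ formally converts the integral equation $u = R(Q|u|^{p-2}u)$ characterizing dual bound states (where $R := \mathrm{Re}\,\cR$) into the Euler--Lagrange equation
\[
|v|^{p'-2}v = Q^{1/p}R\bigl(Q^{1/p}v\bigr)
\]
of the functional
\[
J(v) = \frac{1}{p'}\|v\|_{L^{p'}}^{p'} - \frac{1}{2}\int_{\RN} v\,Q^{1/p}\,R(Q^{1/p}v)\,dx
\]
on $L^{p'}_G(\RN)$. Because $(G,q,Q^{1/p})$ is an admissible extension triple and $p$ lies in the admissible range, Theorem~\ref{resolvent-estimates-intro}, applied with $Q^{1/p}$ in place of $Q$, shows that the operator $T:= M_{Q^{1/p}}\,R\,M_{Q^{1/p}}$ extends to a bounded map $L^{p'}_G \to L^p_G$, so $J \in C^1(L^{p'}_G, \R)$. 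Since $J$ is $G$-invariant and $R$ commutes with $O(N)$, the principle of symmetric criticality guarantees that critical points of $J|_{L^{p'}_G}$ are dual bound state solutions.

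Next I would verify the mountain pass geometry of $J$. Since $p'<2$, the estimate $J(v)\ge \frac{1}{p'}\|v\|_{p'}^{p'} - \frac{C}{2}\|v\|_{p'}^2$ gives a strict local minimum at the origin, hence $J \ge \delta > 0$ on a small sphere. For the descent branch, the hypothesis $Q \not\equiv 0$ together with the Fourier representation of $R$ (whose principal-value symbol is strictly positive on $\{|\xi|>1\}$) allows construction of a $G$-symmetrised test function $v_0 \in L^{p'}_G$ with $\int v_0\,T v_0\,dx > 0$, whence $J(tv_0) \to -\infty$. The mountain pass theorem then furnishes a Palais--Smale sequence $(v_n)\subset L^{p'}_G$ at a level $c > 0$. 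Boundedness of $(v_n)$ in $L^{p'}_G$ follows from the identity
\[
J(v_n) - \tfrac{1}{2} J'(v_n) v_n \;=\; \bigl(\tfrac{1}{p'}-\tfrac{1}{2}\bigr)\|v_n\|_{p'}^{p'}
\]
combined with $p' < 2$.

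The core step is extraction of a nontrivial limit. From $J'(v_n) v_n \to 0$ and $J(v_n) \to c$, the quadratic form $\int v_n T v_n\,dx$ stays bounded away from $0$, and Theorem~\ref{nonvanishing-intro} (applied with $Q^{1/p}$ in place of $Q$) yields $R,\zeta > 0$ and $x_n \in \RN$ with $\int_{B_R(x_n)}|Q^{1/p} v_n|^{p'}\,dx \ge \zeta$ for every $n$. Hypothesis \eqref{eq:asymptotic-Q-condition} combined with $Q \in L^\infty$ implies $\|Q^{1/p}\|_{L^s(B_R(x))} \to 0$ as $|x|\to\infty$ for every $s \ge 1$; using Hölder's inequality with the $L^{p'}$-bound on $v_n$, and an a~posteriori $L^p$-bound on $v_n$ derived from the dual equation via Theorem~\ref{resolvent-estimates-intro} and interpolation, one sees that $|x_n|\to\infty$ would force the left-hand side to vanish, contradicting the lower bound $\zeta$. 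Hence $(x_n)$ is bounded, and, extracting a further subsequence, $v_n \rightharpoonup v$ in $L^{p'}_G$ with $v \not\equiv 0$.

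It remains to show that $v$ is a critical point. The weight condition \eqref{eq:asymptotic-Q-condition} -- which upgrades to $\|Q^{1/p}\|_{L^s(B_R(x))} \to 0$ for every $s$ -- allows one to approximate $M_{Q^{1/p}}$ in operator norm by multiplication with compactly supported truncations, and combined with local compactness inherited from $R$ on bounded domains this makes $T: L^{p'}_G \to L^p_G$ a compact operator. Consequently $T v_n \to T v$ strongly in $L^p$, and from $|v_n|^{p'-2} v_n = T v_n + o(1)$ together with the fact that $w \mapsto |w|^{p'-2} w$ is a homeomorphism $L^{p'} \to L^p$, we deduce $v_n \to v$ strongly in $L^{p'}_G$; passing to the limit yields $J'(v) = 0$. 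Setting $u := R(Q^{1/p} v)$ produces the desired nontrivial $G$-invariant dual bound state solution of \eqref{eqn:intro_helmholtz}. The principal obstacle I foresee is the rigorous justification of the compactness of $T$ and, more subtly, the use of the purely $L^1$-vanishing hypothesis \eqref{eq:asymptotic-Q-condition} to rule out concentration escape detected through the weighted norm $\|Q^{1/p} v_n\|_{L^{p'}}$ appearing in Theorem~\ref{nonvanishing-intro}.
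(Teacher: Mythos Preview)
Your overall strategy---dual functional $J$ on $L^{p'}_G$, mountain pass geometry, nonvanishing via Theorem~\ref{nonvanishing-intro}, then passage to a nontrivial critical point---matches the paper's. However, the two steps you yourself flag as obstacles are genuine gaps, and the resolutions you sketch do not work.

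\textbf{Ruling out $|x_n|\to\infty$.} A direct H\"older argument on $\int_{B_R(x_n)} Q^{p'/p}|v_n|^{p'}$ requires a bound on $v_n$ in some $L^r$ with $r>p'$, and the ``a~posteriori $L^p$-bound'' you invoke does not follow: the Palais--Smale relation $|v_n|^{p'-2}v_n = K_{\sQ} v_n + o(1)_{L^p}$ only gives back $\| |v_n|^{p'-1}\|_p = \|v_n\|_{p'}^{p'-1}$. The paper instead tests the PS condition against the specific function $\varphi_n := Q^{p-1}v_n\,\mathds{1}_{B_R(x_n)}$, obtaining
\[
\int_{B_R(x_n)}|Q^{1/p}v_n|^{p'}\,dx \;=\; J'(v_n)\varphi_n + \int_{\R^N} v_n\,K_{\sQ}\varphi_n\,dx,
\]
and then uses the \emph{non-selfdual} bound $\|K_{\sQ}\varphi_n\|_p \le C\|\varphi_n\|_{\sigma'}$ for some $\sigma'<p'$ (Corollary~\ref{non-selfdual}). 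It is precisely this gain $\sigma'<p'$ that allows H\"older to absorb a factor $\|Q\|_{L^1(B_R(x_n))}^{\theta}$, which vanishes by~\eqref{eq:asymptotic-Q-condition}.

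\textbf{Global compactness of $T=K_{\sQ}$.} Your claim that $M_{Q^{1/p}}$ is approximable in operator norm by compactly supported truncations is false: the $L^p\to L^p$ norm of multiplication by $f$ is $\|f\|_\infty$, and condition~\eqref{eq:asymptotic-Q-condition} says nothing about $\|Q^{1/p}\|_{L^\infty(\R^N\setminus B_k)}$ (for $Q=\mathds{1}_{L_\alpha}$ this is identically~$1$). The paper never asserts global compactness of $K_{\sQ}$ nor strong convergence of $(v_n)$. It establishes only \emph{local} compactness (Lemma~\ref{K-locally-compact}), uses it to show that $(|v_n|^{p'-2}v_n)$ is Cauchy in $L^p(B_R)$ for fixed $R$, and then verifies $J'(v)\varphi=0$ against compactly supported $G$-invariant test functions, which suffices.
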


We recall that, by the Stein-Tomas inequality, $(G,q,Q')$ is an admissible extension triple for $q= \frac{2(N+1)}{N-1}$ and every closed subgroup $G \subset O(N)$ and every $Q' \in L^\infty(\R^N)$. Recalling moreover that $\frac{2N}{N-1} \frac{2 q}{q+ 2}= \frac{2(N+1)}{N-1}$ in this case, we readily deduce the following corollary of Theorem~\ref{dual-ground-state-intro}. 

\begin{corr}
\label{dual-ground-state-intro-cor-2}
Let $N \ge 3$, let $G \subset O(N)$ be a closed subgroup, and let $Q \in L^\infty_G(\R^N)$ be a nonnegative function satisfying $Q \not \equiv 0$ and ~(\ref{eq:asymptotic-Q-condition}). Then (\ref{eqn:intro_helmholtz}) admits a nontrivial $G$-invariant dual bound state solution for every $p \in (\frac{2(N+1)}{N-1}, \frac{2N}{N-2}).$
\end{corr}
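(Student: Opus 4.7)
The plan is to specialize Theorem~\ref{dual-ground-state-intro} at the Stein--Tomas critical exponent $q = \frac{2(N+1)}{N-1}$ and verify that all its hypotheses reduce to the assumptions of the corollary. No new analytical input is needed beyond the classical Stein--Tomas inequality, so the task is essentially to check three algebraic facts and then invoke the theorem.

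First I would observe that, with $q = \frac{2(N+1)}{N-1}$, the unweighted Stein--Tomas estimate \eqref{eqn:intro} applied to any $G$-invariant $F \in L^2(\Sp^{N-1})$ gives $\|\wdc F_\sigma\|_{L^q(\R^N)} \le C\|F\|_{L^2(\Sp^{N-1})}$. Since $Q \in L^\infty_G(\R^N)$ is nonnegative and bounded, the function $Q^{1/p}$ is likewise $G$-invariant and in $L^\infty(\R^N)$, and therefore
\begin{equation*}
\|Q^{1/p}\wdc F_\sigma\|_{L^q(\R^N)} \le \|Q^{1/p}\|_{L^\infty} \|\wdc F_\sigma\|_{L^q(\R^N)} \le C'\|F\|_{L^2(\Sp^{N-1})}.
\end{equation*}
Thus $(G,q,Q^{1/p})$ is an admissible extension triple in the sense of Definition~\ref{sec:introduction-definition}, for any closed subgroup $G \subset O(N)$.

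Second I would carry out the short computation
\begin{equation*}
\frac{2N}{N-1}\cdot\frac{2q}{q+2} \;=\; \frac{2N}{N-1}\cdot\frac{\tfrac{4(N+1)}{N-1}}{\tfrac{2(N+1)+2(N-1)}{N-1}} \;=\; \frac{2N}{N-1}\cdot\frac{N+1}{N} \;=\; \frac{2(N+1)}{N-1},
\end{equation*}
which also shows (since $N \ge 3$ implies $\frac{2(N+1)}{N-1}\ge 2$) that $\max\bigl\{\frac{2N}{N-1}\frac{2q}{q+2},\,2\bigr\} = \frac{2(N+1)}{N-1}$. Hence the admissible range of $p$ in Theorem~\ref{dual-ground-state-intro} becomes exactly $\bigl(\frac{2(N+1)}{N-1}, \frac{2N}{N-2}\bigr)$, matching the range stated in the corollary. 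A quick check $(N+1)(N-2) < N(N-1)$ confirms that this interval is nonempty.

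Finally, with these verifications in place, the hypotheses of Theorem~\ref{dual-ground-state-intro} are met for every $p$ in the stated range, and its conclusion yields a nontrivial $G$-invariant dual bound state solution of \eqref{eqn:intro_helmholtz}. There is no real obstacle in this argument beyond the arithmetic identification of the critical exponent; all the substantive work is contained in Theorem~\ref{dual-ground-state-intro} itself, together with the classical Stein--Tomas bound.
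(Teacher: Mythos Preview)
Your proof is correct and follows essentially the same route as the paper: choose $q=\frac{2(N+1)}{N-1}$, use the classical Stein--Tomas inequality together with $Q^{1/p}\in L^\infty_G(\R^N)$ to verify that $(G,q,Q^{1/p})$ is an admissible extension triple, compute $\frac{2N}{N-1}\cdot\frac{2q}{q+2}=\frac{2(N+1)}{N-1}\ge 2$, and apply Theorem~\ref{dual-ground-state-intro}. The only difference is that you spell out the arithmetic in more detail than the paper does.
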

This corollary applies in particular in the non-symmetric case $G= \{\id\}$, but it requires the asymptotic condition (\ref{eq:asymptotic-Q-condition}). On the other hand, in the case of special symmetries corresponding to the subgroups $G_k$ defined in (\ref{eq:def-G-k}), we may drop assumption (\ref{eq:asymptotic-Q-condition}), as the following result shows.

\begin{theorem}
\label{dual-ground-state-intro-cor-3}
Let $N \ge 4$, let $k \in \{2,\dots,N-2\}$, and let $Q \in L^\infty_{G_k}(\R^N)$ be a nonnegative function with $Q \not \equiv 0$. Then (\ref{eqn:intro_helmholtz}) admits a nontrivial $G_{k}$-invariant dual bound state solution for every $p \in (  \frac{2(N+1)}{N-1}, \frac{2N}{N-2})$.
\end{theorem}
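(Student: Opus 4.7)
My plan is to run the dual variational argument underlying Theorem~\ref{dual-ground-state-intro} inside the smaller space $L^{p'}_{G_k}(\R^N)$, replacing the role of the decay hypothesis~(\ref{eq:asymptotic-Q-condition}) by a compactness argument that exploits the fact that both factors of $G_k = O(N-k)\times O(k)$ act nontrivially on their respective components, which is precisely what the assumption $2 \le k \le N-2$ guarantees.

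The first step is to record that, since $Q\in L^\infty_{G_k}(\R^N)$, the classical Stein--Tomas theorem shows that $(G_k,\tfrac{2(N+1)}{N-1},Q^{1/p})$ is an admissible extension triple. With $q = \tfrac{2(N+1)}{N-1}$ one has $\tfrac{2N}{N-1}\cdot\tfrac{2q}{q+2} = \tfrac{2(N+1)}{N-1} < p$, so Theorems~\ref{resolvent-estimates-intro} and~\ref{nonvanishing-intro} both apply and give, respectively, a bounded extension of $\cR_{Q^{1/p}}$ to $L^{p'}_{G_k}(\R^N) \to L^p_{G_k}(\R^N)$ and a nonvanishing property for $G_k$-invariant sequences. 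With these tools in hand I would set up the dual functional of Section~\ref{sec:dual-vari-fram} on $L^{p'}_{G_k}(\R^N)$, verify its mountain pass geometry using $Q\not\equiv 0$, and extract by min-max a bounded Palais--Smale sequence $(v_n)\subset L^{p'}_{G_k}(\R^N)$ at a positive level $c$ satisfying $\int v_n\cR_{Q^{1/p}}(v_n)\,dx \to 2c \neq 0$. Theorem~\ref{nonvanishing-intro} then produces, along a subsequence, constants $R,\zeta>0$ and points $(x_n)\subset\R^N$ with $\int_{B_R(x_n)}|Q^{1/p}v_n|^{p'}\,dx \ge \zeta$ for every $n$.

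The decisive step is to show that $(x_n)$ remains bounded. Write $x_n=(x'_n,x''_n)\in\R^{N-k}\times\R^k$ and suppose for contradiction that $|x_n|\to\infty$ along a subsequence; then $\max(|x'_n|,|x''_n|)\to\infty$. The $G_k$-orbit of $x_n$ equals the product of spheres $S^{N-k-1}_{|x'_n|}\times S^{k-1}_{|x''_n|}$, and since $N-k-1\ge 1$ and $k-1\ge 1$ it contains a factor sphere of dimension at least one with radius tending to infinity. A standard packing argument on that factor then yields elements $A_{n,1},\dots,A_{n,m_n}\in G_k$ with $m_n\to\infty$ such that the balls $B_R(A_{n,j}x_n)$ are pairwise disjoint. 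By $G_k$-invariance of both $v_n$ and $Q$, each such ball carries the same mass, so
$$
\|Q^{1/p}v_n\|_{L^{p'}(\R^N)}^{p'} \;\ge\; \sum_{j=1}^{m_n}\int_{B_R(A_{n,j}x_n)}|Q^{1/p}v_n|^{p'}\,dx \;\ge\; m_n\zeta \;\longrightarrow\;\infty,
$$
contradicting the boundedness of $(v_n)$ in $L^{p'}$ together with $Q\in L^\infty$. This is precisely where the range $k\in\{2,\dots,N-2\}$ enters: for $k=1$ or $k=N-1$ one factor of $G_k$ acts on a line, its orbit contains only two antipodal copies of the original ball, and the packing count cannot blow up.

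With $(x_n)$ bounded I would conclude as in the proof of Theorem~\ref{dual-ground-state-intro}: pass to a further subsequence with $x_n\to x_0$, extract a weak limit $v\in L^{p'}_{G_k}(\R^N)$ of $(v_n)$, use the boundedness of $\cR_{Q^{1/p}}$ together with the localized mass concentration inside the fixed ball $B_{2R}(x_0)$ to show that $v\not\equiv 0$, and identify $v$ as a critical point of the dual functional via weak continuity of its gradient. The corresponding $G_k$-invariant dual bound state solution of~(\ref{eqn:intro_helmholtz}) is then recovered in the standard way from $v$. The main obstacle I anticipate is the symmetry-compactness step itself, in particular verifying uniformly in the two possible regimes (``$|x'_n|$ large'' versus ``$|x''_n|$ large'') that the packing count $m_n$ genuinely diverges; this is the only place in the argument where the bound on $k$ is essential, and the rest is a faithful transcription of the proof of Theorem~\ref{dual-ground-state-intro}.
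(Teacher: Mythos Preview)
Your proposal is correct and follows essentially the same route as the paper: the paper packages the packing argument abstractly as condition (A2) in Proposition~\ref{nontrivial-weak-limit-PS} (via the counting function $N_{G_k}(x,R)\to\infty$, proved from the fact that the minimal $G_k$-orbit dimension is $\min\{k-1,N-k-1\}\ge 1$), but the geometric content is exactly your disjoint-balls argument on the large-radius sphere factor. One minor slip: the limit of $\int v_n K_\sQ v_n\,dx$ along the Palais--Smale sequence is $\frac{2p'}{2-p'}c$, not $2c$, though only its positivity matters for invoking Theorem~\ref{nonvanishing-intro}.
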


Finally, we point out that assumption (\ref{eq:asymptotic-Q-condition}) holds in particular for functions $Q \in L^\infty(\R^N)$ satisfying $|Q| \le c \mathds{1}_{L_\alpha}$ for some $c,\alpha>0$, where $L_\alpha$ is given in (\ref{eqn:L}). Using this fact, the following corollary can be deduced from Theorems~\ref{theo:intro1-alpha-equals-beta} and~\ref{dual-ground-state-intro}.

\begin{corr}
\label{dual-ground-state-intro-cor-1}
Let $N \ge 3$, let $k \in \{1,\dots,N-1\}$, and let $\alpha>0$. Moreover, let $Q \in L^\infty_{G_k}(\R^N)$ be a nonnegative function with $Q \not \equiv 0$ and satisfying $|Q| \le c \mathds{1}_{L_\alpha}$ for some $c>0$ with $L_\alpha$ given in \eqref{eqn:L}. 
Then (\ref{eqn:intro_helmholtz}) admits a nontrivial $G_{k}$-invariant dual bound state solution for all $p \in (\mu_{N,k,\alpha},\frac{2N}{N-2})  $ if one of the following holds:
\begin{itemize}
\item[(i)]  $k=1$  and 
\begin{equation}\label{eqn:explicit_k=1}
\mu_{N,1,\alpha} :=\text{\footnotesize $
\left\{
  \begin{aligned}
    &\text{\normalsize $2$}, &&\qquad \qquad \frac{1}{N-1} \text{\normalsize $\,<\, \alpha \leq$}\: \frac{N+1}{3(N-1)},\\
    &\frac{4N (\alpha(N-1)- 1)}{(N-1) (2\alpha N -3\alpha-1)},&&\qquad \qquad \text{\normalsize $\alpha \,>$}\:\frac{N+1}{3(N-1)}.
  \end{aligned}
\right.  
$}
\end{equation}
\item[(ii)] $k=N-1$ and 
  \begin{equation}
    \label{eqn:explicit_k=N-1}
\mu_{N,N-1,\alpha} :=\text{\footnotesize $
\left\{
  \begin{aligned}
    &\frac{4N(N-1-\alpha)}{(N-1)(2N-\alpha-3)}, &&\qquad \qquad \text{\normalsize $ 0\,<\alpha \leq\:$} \frac{3(N-1)}{N+1},\\
    &\text{\normalsize $2$}, &&\qquad \qquad \frac{3(N-1)}{N+1} \text{\normalsize $\,<\, \alpha \,<$}\: N-1.
  \end{aligned}
\right.  
$}
\end{equation}
\item[(iii)] $2 \le k \le N-2$ and 
  \begin{equation}
    \label{eq:def-mu-N-k-alpha}
\mu_{N,k,\alpha} :=\text{\footnotesize $
\left\{
  \begin{aligned}
    &\frac{4N(k-\alpha(N-k))}{(N-1)(2k-1-\alpha(N-k))}, &&\qquad \qquad \text{\normalsize $\alpha \leq\:$} \frac{N+2k-1}{(N+1)(N-k)},\\
    &\text{\normalsize $2$}, &&\qquad \qquad \frac{N+2k-1}{(N+1)(N-k)} \text{\normalsize $\,<\, \alpha \leq$}\: \frac{(N+1)k}{N-1 + 2(N-k)},\\
    &\frac{4N (\alpha(N-k)- k)}{(N-1) (2\alpha (N-k)-\alpha- k)},&&\qquad \qquad \text{\normalsize $\alpha \,>$}\: \frac{(N+1)k}{N-1 + 2(N-k)}.
  \end{aligned}
\right.  
$}
\end{equation}
\end{itemize}
\end{corr}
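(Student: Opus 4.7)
The plan is to apply Theorem~\ref{dual-ground-state-intro} with the given $p$ and a suitably chosen $q$. For this I must verify the decay condition~(\ref{eq:asymptotic-Q-condition}) on $Q$, produce an admissible triple $(G_k,q,Q^{1/p})$ with $q\in[1,\tfrac{2(N+1)}{N-1}]$, and ensure that $p>\max\bigl\{\tfrac{2N}{N-1}\tfrac{2q}{q+2},2\bigr\}$. The explicit expression for $\mu_{N,k,\alpha}$ will then arise by optimising $q$ over the admissible range.

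\emph{Decay and admissibility.} Since $0\le Q\le c\mathds{1}_{L_\alpha}$, it suffices to show that $|L_\alpha\cap B_R(x)|\to 0$ as $|x|\to\infty$ in order to obtain~(\ref{eq:asymptotic-Q-condition}). Writing $x=(x^{(N-k)},x^{(k)})$: if $|x^{(k)}|$ stays bounded while $|x^{(N-k)}|\to\infty$, the defining inequality of $L_\alpha$ forces $|y^{(k)}|\lesssim|x^{(N-k)}|^{-1/\alpha}$ on $L_\alpha\cap B_R(x)$; if $|x^{(k)}|\to\infty$, then $L_\alpha\cap B_R(x)$ lies in the slab $\{|y^{(N-k)}|\lesssim|x^{(k)}|^{-\alpha}\}$. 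In both cases the Lebesgue measure vanishes, giving~(\ref{eq:asymptotic-Q-condition}). Next, $Q^{1/p}$ is $G_k$-invariant with $|Q^{1/p}|\le c^{1/p}\mathds{1}_{L_\alpha}$, and the $\alpha$-ranges stipulated in (i)--(iii) imply~(\ref{eq:extra-assumption-1-N-1}); hence Theorem~\ref{theo:intro1-alpha-equals-beta}, together with the monotonicity remark following it, shows that $(G_k,q,Q^{1/p})$ is an admissible extension triple for every $q>\lambda_{N,k,\alpha}$.

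\emph{Optimisation and case analysis.} The function $q\mapsto\tfrac{2N}{N-1}\tfrac{2q}{q+2}$ is strictly increasing, so I pick $q$ just above $\tilde q:=\max\{1,\lambda_{N,k,\alpha}\}$, capped at the Stein--Tomas value $\tfrac{2(N+1)}{N-1}$ where admissibility is automatic. A short calculation gives $\tfrac{2N}{N-1}\tfrac{2q}{q+2}>2\iff q>\tfrac{2(N-1)}{N+1}$, and $\tfrac{4N}{3(N-1)}\le 2$ for $N\ge 3$, so with $\lambda:=\lambda_{N,k,\alpha}$,
\[
\mu_{N,k,\alpha}=\begin{cases}2,&\lambda\le\tfrac{2(N-1)}{N+1},\\[3pt]\dfrac{4N\lambda}{(N-1)(\lambda+2)},&\lambda>\tfrac{2(N-1)}{N+1}.\end{cases}
\]
Substituting the branches of~(\ref{eq:def-lambda-n-k-alpha}) and clearing denominators then reproduces~(\ref{eqn:explicit_k=1}), (\ref{eqn:explicit_k=N-1}) and~(\ref{eq:def-mu-N-k-alpha}). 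For $k=1$ and $k=N-1$ only a single branch of $\lambda_{N,k,\alpha}$ is active, so solving $\lambda=\tfrac{2(N-1)}{N+1}$ gives the unique cut-offs $\alpha=\tfrac{N+1}{3(N-1)}$ and $\alpha=\tfrac{3(N-1)}{N+1}$, respectively. For $2\le k\le N-2$, $\lambda_{N,k,\alpha}$ is the maximum of the two branches, equal to $0$ at $\alpha=\tfrac{k}{N-k}$; solving each branch equation against $\tfrac{2(N-1)}{N+1}$ produces the two cut-offs $\alpha_1=\tfrac{N+2k-1}{(N+1)(N-k)}$ and $\alpha_2=\tfrac{(N+1)k}{N-1+2(N-k)}$ that delimit the middle region where $\mu=2$. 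The only genuine obstacle is this piecewise algebraic simplification; conceptually the corollary is an assembly of Theorems~\ref{theo:intro1-alpha-equals-beta} and~\ref{dual-ground-state-intro}.
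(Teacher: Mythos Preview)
Your proposal is correct and follows essentially the same route as the paper: verify the decay condition~(\ref{eq:asymptotic-Q-condition}) for $\mathds{1}_{L_\alpha}$, invoke Theorem~\ref{theo:intro1-alpha-equals-beta} (plus the monotonicity remark) to obtain admissibility of $(G_k,q,Q^{1/p})$ for $q>\lambda_{N,k,\alpha}$, and then apply Theorem~\ref{dual-ground-state-intro}, optimising in $q$ to arrive at $\mu_{N,k,\alpha}=\max\bigl\{\tfrac{2N}{N-1}\tfrac{2\lambda}{\lambda+2},2\bigr\}$. Your treatment of the decay step is in fact slightly more complete than the paper's, which only spells out the case $|x^{(k)}|\to\infty$; the remaining piecewise algebra is identical.
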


We point out that, in contrast to Corollary~\ref{dual-ground-state-intro-cor-2} and Theorem~\ref{dual-ground-state-intro-cor-3}, Corollary~\ref{dual-ground-state-intro-cor-1} allows to consider exponents $p< \frac{2(N+1)}{N-1}$.

To put our existence results for (\ref{eqn:intro_helmholtz}) into perspective, we recall some previous results. In \cite{Gutierrez2004}, the existence of small complex solutions has been proved via the use of contraction mappings in dimensions $N=3,4$, $p=3$ and $Q=\pm 1$. A variant of this technique is developed in \cite{Mandel1}, where continua of small real-valued solutions of (\ref{eqn:intro_helmholtz}) are detected for a larger class of nonlinearities. The dual variational approach to (\ref{eqn:intro_helmholtz}) was introduced in \cite{Evequoz2015}, where the existence of nontrivial dual bound state solutions was proved for $p \in \left(\frac{2(N+1)}{N-1},\frac{2N}{N-2}\right)$ and for nonnegative weight functions $Q \in L^{\infty}(\RN)\setminus\{0\}$ which are either $\Z^N$-periodic or satisfy the uniform decay assumptions $Q(x) \to 0$ as $|x| \to \infty$. Under additional restrictions on $Q$, this approach was extended to the Sobolev critical case $p = \frac{2N}{N-2}$ in \cite{Evequoz2019}. Moreover, a dual approach in Orlicz spaces was developed in \cite{Evequoz-Orlicz} to treat more general nonlinearities in (\ref{eqn:intro_helmholtz}). The defocusing case $Q \le 0$ in (\ref{eqn:intro_helmholtz}) and radial solutions are considered in \cite{Mandel}. We are not aware of any previous work where symmetries different from radial symmetry are used to extend the range of admissible exponents to values below the Stein-Tomas exponent $\frac{2(N+1)}{N-1}$ and to overcome lack of compactness issues in the context of (\ref{eqn:intro_helmholtz}). .

The paper is organized as follows. In Section~\ref{sec:fourier-estimates-g}, we derive a Fourier extension estimate for $G_k$-invariant functions, where $G_k$ is defined in (\ref{eq:def-G-k}). In particular, we prove a generalization of Theorem~\ref{theo:intro1-alpha-equals-beta}. In Section~\ref{sec:resolv-estim-g}, we provide weighted Helmholtz resolvent estimates relative to a given admissible extension triple, thereby giving the proof of Theorem~\ref{resolvent-estimates-intro}. In Section~\ref{sec:nonv-g-invar} we study related nonvanishing properties, and we give the proof of Theorem~\ref{nonvanishing-intro}.
Finally, Section~\ref{sec:dual-vari-fram} is devoted to our main existence results for dual bound state solutions of (\ref{eqn:intro_helmholtz}).

We close this introduction by fixing some notation. Throughout the paper we denote by $B_{r}(x)$ the open ball in $\RN$ with radius $r>0$ and center at $x$. 
Moreover, we set $B_{r} = B_{r}(0)$ and $\Sp^{N-1}$ for the boundary of $B_{1}=:B$. The constant $\alpha_{N}$ represents the volume of the unit ball $B_1$ in $\RN$. For any element $x \in \R^{N}$ we write $x = (x^{(N-k)},x^{(k)}) := ((x_{1},\dots, x_{N-k}) , (x_{N_k+1}, \dots , x_{N})) \in \R^{N-k} \times \R^{k}$. Moreover by $B^{(k)}$ we denote the unit ball in $\R^{k}$.
By $\mathds{1}_{L}$ we denote the characteristic function of a measurable set $L \subset \RN$. 
Furthermore, we shall indifferently denote by $\widehat{f}$ or ${\mathcal{F}}(f)$ the Fourier transform of a function in $\RN$ given by
$$ \cF{f}(\xi) = \wdh{f}(\xi) = (2\pi)^{-\frac{N}{2}}\int\limits_{\RN}e^{-ix \cdot \xi}f(x)~dx $$ and by $\wdc{F}_{\sigma}$ the inverse Fourier transform of an admissible functions $F$ defined on $\Sp^{N-1}$ via
$$ \wdc{F}_{\sigma}(x) = (2\pi)^{-\frac{N}{2}} \int\limits_{\SPN}e^{ix\cdot \omega} F(\omega)~d\sigma(\omega). $$
For $1\leq s\leq \infty$, we abbreviate the norm on $L^{s}(\RN)$ by $\left\|\cdot\right\|_{s}$. The Schwartz-class of rapidly decreasing functions on $\RN$ is denoted by $\SR$. For any $p \in (1,\infty)$ we always denote by $p':=\frac{p}{p-1}$ the Hölder conjugate of $p$.\\

\noindent{\bf Acknowledgement:}  T. Weth and T. Yesil are supported by the German Science Foundation (DFG) within the project WE-2821/5-2. The authors wish to thank the referee for his/her valuable comments and corrections.

\section{Fourier extension estimates for $G_k$-invariant functions} 
\label{sec:fourier-estimates-g}

We recall that, for a function $F \in L^2(\SPN)$, we define the (inverse) Fourier transform of $Fd\sigma$ by
$$ 
\wdc{F}_{\sigma}(x) = (2\pi)^{-\frac{N}{2}}\intl_{\SPN}e^{i\omega\cdot x}F(\omega)~d\sigma(\omega). 
$$
For $F\equiv 1$ we use the notation
$$ \wdc{d\sigma}_{N}(x) = (2\pi)^{-\frac{N}{2}}\intl_{\SPN}e^{i\omega\cdot x}~d\sigma(\omega) $$
and will often omit the dimensional index if no confusion is possible. We point out that this function satisfies the key uniform 
bound 
\begin{equation}\label{eqn:asymp_fouriermeasure}
|\wdc{d\sigma}_{N}(x) | \leq C (1+|x|)^{\frac{1-N}{2}}, \qquad x \in \RN.
\end{equation}
with a constant $C=C(N)>0$, see e.g. \cite[\S8: Theorem 3.1]{Stein-K}.

For $k \in \{1,\dots,N-1\}$, we consider the closed subgroup $G_k= O(N-k) \times O(k) \subset O(N)$. We first derive a useful expression for $\wdc{F}_{\sigma}$ in the case where $F \in \cC(\SPN)$ is $G_k$-invariant. Note that in this case $F$ only depends on one variable $r \in [0,1]$ via the function 
\begin{equation}
  \label{eq:def-h-F}
h_F: [0,1] \to \R, \qquad h_F(r):= F(r\eta,\sqrt{1-r^2}\mu)\qquad \text{for $\eta \in \Sp^{N-k-1}$, $\mu \in \Sp^{k-1}$.}
\end{equation}
 \begin{lemma}\label{lem: Fourierextensionlemma}
 	Let $k \in \{1,\dots,N-1\}$ and $F \in \cC(\SPN)$ be $G_k$-invariant. Then we have
 	\begin{equation}\label{eqn: fourierextension}
 	\wdc{F}_{\sigma}(x) = (2\pi)^{\frac{N}{2}}\frac{k\alpha_{k}}{N\alpha_{N}}\int\limits^{1}_{0}r^{N-k-1}(1-r^{2})^{\frac{k-2}{2}}h_F(r)\wdc{d\sigma}_{N-k}(rx^{(N-k)})\wdc{d\sigma}_{k}(\sqrt{1-r^2}x^{(k)})~dr
 	\end{equation}
 	with $h_F$ given in~(\ref{eq:def-h-F}). Moreover, 
 	\begin{equation*}\label{eqn:fourierextensionmodulus}
 	|\wdc{F}_{\sigma}(x)| \leq (2\pi)^{\frac{N}{2}}\frac{k\alpha_{k}}{N\alpha_{N}}\frac{\left\|F\right\|_{L^{2}(\Sp^{N-1})}}{\sqrt{|\Sp^{N-k-1}||\Sp^{k-1}|}}\left(\int\limits^{1}_{0}r^{N-k-1}(1-r^{2})^{\frac{k-2}{2}}|\wdc{d\sigma}_{N-k}(rx^{(N-k)})|^2~|\wdc{d\sigma}_{k}(\sqrt{1-r^2}x^{(k)})|^2\right)^{\frac{1}{2}}. 
      \end{equation*}
 	for all $x \in \RN.$ 
 \end{lemma}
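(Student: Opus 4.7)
The plan is to reduce $\wdc{F}_\sigma(x)$ to a one-dimensional integral by disintegrating $d\sigma$ along the orbits of $G_k$ on $\Sp^{N-1}$, and then to deduce the pointwise estimate by a single application of the Cauchy--Schwarz inequality. First, I would parametrize $\Sp^{N-1}\subset\R^{N-k}\times\R^{k}$ by
\begin{equation*}
\omega=(r\eta,\sqrt{1-r^{2}}\mu),\qquad r\in[0,1],\ \eta\in\Sp^{N-k-1},\ \mu\in\Sp^{k-1},
\end{equation*}
and verify the corresponding disintegration
\begin{equation*}
d\sigma(\omega)=c_{N,k}\,r^{N-k-1}(1-r^{2})^{\frac{k-2}{2}}\,dr\,d\sigma_{N-k}(\eta)\,d\sigma_{k}(\mu),
\end{equation*}
where $c_{N,k}$ is the constant that will eventually produce the factor $\frac{k\alpha_{k}}{N\alpha_{N}}$ in~\eqref{eqn: fourierextension}. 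This decomposition is obtained by expressing the Lebesgue measure on $\RN \cong \R^{N-k}\times\R^{k}$ in product polar coordinates, namely $u=\rho\cos\theta\,\eta$ in $\R^{N-k}$ and $v=\rho\sin\theta\,\mu$ in $\R^{k}$, and then substituting $r=\cos\theta$ for the angular variable between the two factors.

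Next, I would exploit the $G_{k}$-invariance of $F$: by definition of $h_{F}$ in~\eqref{eq:def-h-F}, we have $F(r\eta,\sqrt{1-r^{2}}\mu)=h_{F}(r)$ independently of $\eta,\mu$. Substituting this into the definition of $\wdc{F}_{\sigma}(x)$ together with the factorization $e^{i\omega\cdot x}=e^{ir\eta\cdot x^{(N-k)}}\,e^{i\sqrt{1-r^{2}}\mu\cdot x^{(k)}}$, the two inner integrals over $\Sp^{N-k-1}$ and $\Sp^{k-1}$ reproduce $(2\pi)^{(N-k)/2}\wdc{d\sigma}_{N-k}(rx^{(N-k)})$ and $(2\pi)^{k/2}\wdc{d\sigma}_{k}(\sqrt{1-r^{2}}x^{(k)})$ respectively. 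Combining these powers of $2\pi$ with the prefactor $(2\pi)^{-N/2}$ built into $\wdc{F}_{\sigma}$ and inserting the constant $c_{N,k}$ from the disintegration yields exactly~\eqref{eqn: fourierextension}.

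For the pointwise estimate, I would apply Cauchy--Schwarz to the $r$-integral in~\eqref{eqn: fourierextension} with respect to the weight $r^{N-k-1}(1-r^{2})^{(k-2)/2}\,dr$, separating the factor $h_{F}(r)$ from the product $\wdc{d\sigma}_{N-k}(rx^{(N-k)})\,\wdc{d\sigma}_{k}(\sqrt{1-r^{2}}x^{(k)})$. The weighted $L^{2}$-norm of $h_{F}$ that appears satisfies
\begin{equation*}
\Bigl(\int_{0}^{1}r^{N-k-1}(1-r^{2})^{\frac{k-2}{2}}|h_{F}(r)|^{2}\,dr\Bigr)^{\!1/2}=\frac{\|F\|_{L^{2}(\Sp^{N-1})}}{\sqrt{|\Sp^{N-k-1}||\Sp^{k-1}|}},
\end{equation*}
as follows by applying the same disintegration in reverse to the function $|F(r\eta,\sqrt{1-r^{2}}\mu)|^{2}=|h_{F}(r)|^{2}$, which is constant in $(\eta,\mu)$; the remaining factor is exactly the integral appearing on the right-hand side of the claimed bound.

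The main (and really the only) delicate step in the whole argument is the careful bookkeeping of normalization constants --- the $(2\pi)^{\pm m/2}$ factors from the Fourier convention, the identity $|\Sp^{m-1}|=m\alpha_{m}$ relating $k\alpha_{k}$ and $N\alpha_{N}$ to the spherical surface areas, and the precise Jacobian in the product polar decomposition --- which must collapse exactly to the constant $(2\pi)^{N/2}\frac{k\alpha_{k}}{N\alpha_{N}}$ displayed in~\eqref{eqn: fourierextension}.
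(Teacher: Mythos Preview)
Your proposal is correct and follows essentially the same route as the paper. The paper obtains the disintegration of $d\sigma$ by invoking the slice integration formula from Axler--Bourdon--Ramey's \emph{Harmonic Function Theory} (first integrating over the ball $B^{(N-k)}$ and then passing to polar coordinates there), whereas you derive it directly via product polar coordinates on $\R^{N-k}\times\R^{k}$; after that, both arguments proceed identically --- using the $G_k$-invariance to replace $F$ by $h_F(r)$, recognizing the inner spherical integrals as $\wdc{d\sigma}_{N-k}$ and $\wdc{d\sigma}_k$, and applying Cauchy--Schwarz in the $r$-variable with the same weight for the pointwise bound.
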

 \begin{proof}
 	By using slice integration (see e.g. \cite[A.5]{axler2001}) we have
 	\begin{align*}
 	&\wdc{F}_{\sigma}(x) = (2\pi)^{\frac{N}{2}}\int\limits_{\SPN}e^{ix\cdot\omega}F(\omega)~d\sigma(\omega) \notag\\
 	&=
 	(2\pi)^{\frac{N}{2}}\frac{k\alpha_{k}}{N\alpha_{N}}\int\limits_{B^{(N-k)} }(1-|y|^2)^{\frac{k-2}{2}}e^{ix^{(N-k)}y}\int\limits_{\Sp^{k-1}}e^{ix^{(k)}\sqrt{1-|y|^2}\mu}F\left(y,\sqrt{1-|y|^2}\mu\right)~d\sigma_{k}(\mu)~d_{N-k}(y) \\
 	&=
 	(2\pi)^{\frac{N}{2}}\frac{k\alpha_{k}}{N\alpha_{N}}\int\limits^{1}_{0}r^{N-k-1}(1-r^{2})^{\frac{k-2}{2}}\int\limits_{\Sp^{N-k-1}}e^{ix^{(N-k)}r\eta}\int\limits_{\Sp^{k-1}}e^{ix^{(k)}\sqrt{1-r^2}\mu}F\left(r\eta,\sqrt{1-r^2}\mu\right)~d\sigma_{k}(\mu)~d\sigma_{N-k}(\eta)~dr\\
 	&= 
 	(2\pi)^{\frac{N}{2}}\frac{k\alpha_{k}}{N\alpha_{N}}\int\limits^{1}_{0}r^{N-k-1}(1-r^{2})^{\frac{k-2}{2}}h_F(r)\wdc{d\sigma}_{N-k}(rx^{(N-k)})\wdc{d\sigma}_{k}(\sqrt{1-r^2}x^{(k)})~dr
 	\end{align*}
 	for all $x \in \RN$ with $h_F$ given in (\ref{eq:def-h-F}), as claimed in (\ref{eqn: fourierextension}). In particular, we get
 	\begin{align*}
 	&|\wdc{F}_{\sigma}(x)| \leq 
 	(2\pi)^{\frac{N}{2}}\frac{k\alpha_{k}}{N\alpha_{N}}\int\limits^{1}_{0}r^{N-k-1}(1-r^{2})^{\frac{k-2}{2}}|h_F(r)|~|\wdc{d\sigma}_{N-k}(rx^{(N-k)})|~|\wdc{d\sigma}_{k}(\sqrt{1-r^2}x^{(k)})|~dr.\\
 	&\leq
 	(2\pi)^{\frac{N}{2}}\frac{k\alpha_{k}}{N\alpha_{N}}\left(\int\limits^{1}_{0}r^{N-k-1}(1-r^{2})^{\frac{k-2}{2}}|h_F(r)|^{2}dr \right)^{\frac{1}{2}} \times\\ 
          &\;\qquad \left(\int\limits^{1}_{0}r^{N-k-1}(1-r^{2})^{\frac{k-2}{2}}|\wdc{d\sigma}_{N-k}(rx^{(N-k)})|^2~|\wdc{d\sigma}_{k}(\sqrt{1-r^2}x^{(k)})|^2dr\right)^{\frac{1}{2}} \end{align*}
        \begin{align*}
 	&=
 	(2\pi)^{\frac{N}{2}}\frac{k\alpha_{k}}{N\alpha_{N}}\frac{\left\|F\right\|_{L^{2}(\Sp^{N-1})}}{\sqrt{|\Sp^{N-k-1}||\Sp^{k-1}|}}\left(\int\limits^{1}_{0}r^{N-k-1}(1-r^{2})^{\frac{k-2}{2}}|\wdc{d\sigma}_{N-k}(rx^{(N-k)})|^2~|\wdc{d\sigma}_{k}(\sqrt{1-r^2}x^{(k)})|^2dr\right)^{\frac{1}{2}}. 
 	\end{align*}
 \end{proof}

For $\alpha \ge \beta>0$ and fixed $a>0$, we now consider the subset 
\begin{equation}
  \label{eq:def-L-alpha-beta}
L_{\alpha,\beta}:= \{x = (x^{(N-k)},x^{(k)}) : |x^{(N-k)}| \leq a \max\left\lbrace|x^{(k)}|^{-\alpha},|x^{(k)}|^{-\beta} \right\rbrace \} \subset \R^N.
\end{equation}

We shall prove the following generalization of Theorem~\ref{theo:intro1-alpha-equals-beta}. 

\begin{theorem}\label{theo:alpha-diff-beta-section}
	Let $N \geq 3.$ Suppose we have $\alpha \ge \beta >0$, $k \in \{1,\dots,N-1\}$, $Q=\mathds{1}_{L_{\alpha,\beta}}$ with $L_{\alpha,\beta}$ given as in \eqref{eqn:L}, and
        \begin{equation}
          \label{eq:def-lambda-N-k-alpha-beta}
\lambda_{N,k,\alpha,\beta}:= \max \left\lbrace\frac{2k-2\beta(N-k)}{k-1},\frac{2(N-k)- \frac{2k}{\alpha}}{N-k-1}
\right\rbrace.
        \end{equation}
Suppose furthermore that $q \ge 1$ and $k \in \{1,\dots,N-1\}$ satisfy
\begin{equation}
  \label{eq:assumption-alpha-diff-beta}
\left \{
  \begin{aligned}
  &k = 1,&&\qquad \beta>\frac{1}{N-1}&&\text{ and }~ q > \frac{2(N-1) - \frac{2}{\alpha}}{N-2} \text{ or}\\  
  &2 \le k \le N-2, &&\qquad q> \lambda_{N,k,\alpha,\beta} &&\text{or}\\
  &k=N-1,&&\qquad \alpha < N-1,&&\text{and }q > \frac{2(N-1)- 2\beta}{N-2}.
  \end{aligned}
\right.
\end{equation}
Then there exists a constant $C=C(N,k,\alpha,\beta,a)$ with the property that 
	\begin{equation}\label{eqn:weightedfouriermodifiedlayerintro}
	\left\|Q \wdc{F}_{\sigma} \right\|_{q} \leq C \left\|F\right\|_{L^{2}(\SPN)} \qquad \text{for every $G_k$-invariant function $F \in \cC(\SPN)$.} 
	\end{equation}
	\end{theorem}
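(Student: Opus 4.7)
The plan is to use $G_k$-invariance to reduce the estimate to a two-dimensional integral in the radial variables $s := |x^{(N-k)}|$ and $t := |x^{(k)}|$, and then to control that 2D integral by a careful exponent analysis. Combining Lemma~\ref{lem: Fourierextensionlemma} with the universal decay bound~\eqref{eqn:asymp_fouriermeasure} and setting
$$
I(s,t) := \int_0^1 r^{N-k-1}(1-r^2)^{\frac{k-2}{2}}(1+rs)^{1-(N-k)}(1+\sqrt{1-r^2}\,t)^{1-k}\,dr,
$$
I obtain the pointwise bound $|\wdc{F}_\sigma(x)|^2 \leq C\|F\|_{L^2(\SPN)}^2\, I(s,t)$.

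The first main step is to show $I(s,t) \leq C(1+s)^{1-(N-k)}(1+t)^{1-k}$. This I would do by splitting $[0,1] = [0,\tfrac12]\cup[\tfrac12,1]$. On $[0,\tfrac12]$, $\sqrt{1-r^2}$ stays bounded below by $\tfrac{\sqrt 3}{2}$, so $(1-r^2)^{(k-2)/2}(1+\sqrt{1-r^2}t)^{1-k}$ is dominated by a constant multiple of $(1+t)^{1-k}$, and a scaling $v=rs$ gives $\int_0^{1/2}r^{N-k-1}(1+rs)^{1-(N-k)}dr\le C(1+s)^{1-(N-k)}$. On $[\tfrac12,1]$, the substitution $u=\sqrt{1-r^2}$ interchanges the roles of the two weight factors and yields the complementary bound $C(1+s)^{1-(N-k)}(1+t)^{1-k}$.

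Raising to the power $q/2$ and using that both $|\wdc{F}_\sigma|$ and $\mathds{1}_{L_{\alpha,\beta}}$ depend only on $(s,t)$ by $G_k$-invariance, I pass to polar coordinates in $\R^{N-k}$ and $\R^k$ separately to reduce \eqref{eqn:weightedfouriermodifiedlayerintro} to proving finiteness of
$$
\cJ := \iint_{L'} (1+s)^{(1-(N-k))q/2}(1+t)^{(1-k)q/2}\, s^{N-k-1}\, t^{k-1}\, ds\, dt,
$$
with $L' := \{(s,t)\in[0,\infty)^2 : s\le a\max(t^{-\alpha},t^{-\beta})\}$. Since $\alpha\ge\beta$, I partition $L'=L_1\cup L_2$ into the small-$t$ region $L_1 := \{0<t\le 1,\ 0\le s\le at^{-\alpha}\}$ and the large-$t$ region $L_2 := \{t\ge 1,\ 0\le s\le at^{-\beta}\}$; this cleanly separates the $\alpha$- and $\beta$-contributions.

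On $L_2$, $s\le a$ stays bounded, so $(1+s)^{(1-(N-k))q/2}$ is bounded, the $s$-integral produces a factor $t^{-\beta(N-k)}$, and the remaining $t$-integral converges at infinity exactly under $q>\tfrac{2(k-\beta(N-k))}{k-1}$ when $k\ge 2$; for $k=1$ the integrand collapses to $t^{-\beta(N-1)}$ and integrability forces the standing hypothesis $\beta>\tfrac{1}{N-1}$. On $L_1$, $t$ is bounded and so is $(1+t)^{(1-k)q/2}$; the $s$-integral $\int_0^{at^{-\alpha}}s^{N-k-1}(1+s)^{(1-(N-k))q/2}ds$ either converges as $t\to 0$, in which case the outer $t$-integral is trivially finite, or it grows like $t^{-\alpha((N-k)-(N-k-1)q/2)}$; integrability at $t=0$ in the latter case yields the condition $q>\tfrac{2(N-k)-2k/\alpha}{N-k-1}$ for $k\le N-2$, while in the degenerate case $k=N-1$ the $s$-integral collapses to $at^{-\alpha}$ and integrability forces the standing hypothesis $\alpha<N-1$. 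Combining the two regimes gives precisely $q>\lambda_{N,k,\alpha,\beta}$ under~\eqref{eq:assumption-alpha-diff-beta}. The main obstacle is this final exponent bookkeeping, particularly at the boundary cases $k=1$ and $k=N-1$, where $k-1$ or $N-k-1$ vanishes, the generic max-formula degenerates, and the binding constraint shifts onto the parameters $\beta$ or $\alpha$ alone, as reflected in~\eqref{eq:assumption-alpha-diff-beta}.
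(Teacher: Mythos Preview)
Your argument is correct and takes a genuinely different route from the paper's. The key difference is the order in which you dispose of the $r$-integral. The paper keeps the $r$-integral to the end: it raises the Cauchy--Schwarz bound from Lemma~\ref{lem: Fourierextensionlemma} to the power $q$, pushes the exponent $q/2$ inside via Jensen's inequality on $[0,1]$ (this is the step that produces $r^{(N-k-1)q/2}(1-r^2)^{(k-2)q/4}$ in \eqref{first-est-involv-H}), swaps the $x$- and $r$-integrations, and then splits the resulting function $H_k(r)$ according to $s\lessgtr\sqrt{1-r^2}$. This forces the auxiliary restriction $q<4$ (needed for $\int_0^1(1-r^2)^{-q/4}\,dr<\infty$), which is then removed by interpolation with the trivial $L^\infty$ bound.

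You instead eliminate the $r$-variable \emph{first}, by proving the $q$-independent pointwise estimate $I(s,t)\le C(1+s)^{1-(N-k)}(1+t)^{1-k}$ via the split $[0,\tfrac12]\cup[\tfrac12,1]$ and the symmetry under $u=\sqrt{1-r^2}$. Only afterwards do you raise to the power $q/2$ and integrate over $L_{\alpha,\beta}$, splitting at $t=1$. This buys you two things: no Jensen step is needed (so no implicit restriction to $q\ge 2$ enters), and no interpolation with $L^\infty$ is required for large $q$. The exponent bookkeeping in your $L_1$/$L_2$ decomposition reproduces exactly the conditions in \eqref{eq:assumption-alpha-diff-beta}; the one borderline case you glossed over is when the inner $s$-integral on $L_1$ grows logarithmically (i.e.\ $q=\tfrac{2(N-k)}{N-k-1}$), but then $\int_0^1 t^{k-1}\log(C/t)\,dt<\infty$ since $k\ge 1$, so this is harmless.
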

        \begin{proof}
          We shall prove
          (\ref{eqn:weightedfouriermodifiedlayerintro}) under the additional assumption
          \begin{equation}
            \label{eq:q-less-four-add-assumption}
            q<4,
          \end{equation}
noting that for any $\alpha \ge \beta >0$ and $k \in \{1,\dots,N-1\}$, the set of values $q$ satisfying (\ref{eq:assumption-alpha-diff-beta}) and (\ref{eq:q-less-four-add-assumption}) is non-empty. Moreover, by interpolating with the trivial estimate
$$
\left\| Q \wdc{F}_{\sigma} \right\|_{\infty} \leq (2\pi)^{-N}\sqrt{|\Sp^{N-1}|} \left\|F\right\|_{L^{2}(\Sp^{N-1})},
$$
we can remove the extra assumption (\ref{eq:q-less-four-add-assumption}) a posteriori.

In the following, the letter $C$ stands for positive and possibly different constants depending only on $N,k$, $\alpha,\beta$ and $a$. Let $F \in \cC(\SPN)$ be a $G_k$-invariant function. Without loss of generality we assume that $\|F\|_{L^{2}(\SPN)} = 1$.  Using Lemma \ref{lem: Fourierextensionlemma}, we can write
	\begin{align}
	&\left\| Q \wdc{F}_{\sigma} \right\|_{q}^q= \int\limits_{\RN} |[Q \wdc{F}_{\sigma}](x)|^{q}~dx =\int\limits_{\RN}Q(x)|\wdc{F}_{\sigma}(x)|^{q}~dx \nonumber \\
		&\leq
		C\int\limits_{\RN}Q(x) \int\limits^{1}_{0}r^{(N-k-1)\frac{q}{2}}(1-r^2)^{\frac{k-2}{2}\frac{q}{2}}|\wdc{d\sigma}_{N-k}(rx^{(N-k)})|^{q}|\wdc{d\sigma}_{k}(\sqrt{1-r^2}x^{(k)})|^{q}~dr~dx \nonumber\\
		&=
		C\int\limits^{1}_{0}r^{(N-k-1)\frac{q}{2}}(1-r^2)^{\frac{k-2}{2}\frac{q}{2}}\int\limits_{\RN}Q(x)|\wdc{d\sigma}_{N-k}(rx^{(N-k)})|^{q}|\wdc{d\sigma}_{k}(\sqrt{1-r^2}x^{(k)})|^{q}~dx~dr \nonumber\\
		&=
		C\int\limits^{1}_{0}r^{(N-k-1)\frac{q}{2} -(N-k)}(1-r^2)^{\frac{q}{4}(k-2) - \frac{k}{2}}H_k(r)dr, \label{first-est-involv-H} 
	\end{align}
where we have used that $|Q|^{q} = Q$ due to the special choice of $Q$ as a characteristic function. In the last line we have set
$$
H_k(r):= \int\limits_{\R^{N}}Q\left(\frac{x^{(N-k)}}{r},\frac{x^{(k)}}{\sqrt{1-r^2}}\right)|\wdc{d\sigma}_{N-k}(x^{(N-k)})|^{q}~|\wdc{d\sigma}_{k}(x^{(k)})|^{q}~dx.
$$
Using the definition of $Q$ and the estimate (\ref{eqn:asymp_fouriermeasure}), we deduce that
	\begin{align}
H_k(r) = \int\limits_{\R^{k}}& |\wdc{d\sigma}_{k}(x^{(k)})|^{q} \!\!\!\! \int\limits_{|x^{(N-k)}| \,\leq \,a r \max \bigl\{ \left(\frac{|x^{(k)}|}{\sqrt{1-r^2}}\right)^{-\alpha}, \left(\frac{|x^{(k)}|}{\sqrt{1-r^2}}\right)^{-\beta} \bigr\} } |\wdc{d\sigma}_{N-k}(x^{(N-k)})|^{q}~dx^{(N-k)}~dx^{(k)} \nonumber \\
		\leq C \int\limits_{\R^{k}}& (1+|x^{(k)}|)^{q\frac{1-k}{2}} \!\!\!\! \int\limits_{|x^{(N-k)}| \,\leq \,a r \max \bigl\{ \left(\frac{|x^{(k)}|}{\sqrt{1-r^2}}\right)^{-\alpha}, \left(\frac{|x^{(k)}|}{\sqrt{1-r^2}}\right)^{-\beta} \bigr\} } (1+|x^{(N-k)}|)^{q\frac{1-(N-k)}{2}}~dx^{(N-k)}~dx^{(k)} \nonumber \\
		=
           C \int\limits^{\infty}_{0} &s^{k-1}(1+s)^{-\frac{q}{2}(k-1)} \int\limits^{ar \max \bigl\{\bigl(\frac{\sqrt{1-r^2}}{s}\bigr)^{\alpha},\bigl(\frac{\sqrt{1-r^2}}{s}\bigr)^{\beta} \bigr\}}_{0} t^{N-k-1}(1+t)^{-\frac{q}{2}(N-k-1)}~dt~ds \nonumber\\
           =
             C &\int\limits^{\sqrt{1-r^2}}_{0} s^{k-1}(1+s)^{-\frac{q}{2}(k-1)} \int\limits^{ar (1-r^2)^{\frac{\alpha}{2}}s^{-\alpha} }_{0} t^{N-k-1}(1+t)^{-\frac{q}{2}(N-k-1)}~dt~ds \nonumber\\
          &+\; C \int\limits^{\infty}_{\sqrt{1-r^2}}s^{k-1}(1+s)^{-\frac{q}{2}(k-1)} \int\limits^{ar (1-r^2)^{\frac{\beta}{2}}s^{-\beta}}_{0} t^{N-k-1}(1+t)^{-\frac{q}{2}(N-k-1)}~dt~ds, \label{scnd-est-involv-H}
	\end{align}
	where the last equality follows since $\alpha \ge \beta>0$ by assumption and therefore
	$$ 
\max \bigl\{ \left(\frac{s}{\sqrt{1-r^2}}\right)^{-\alpha}, \left(\frac{s}{\sqrt{1-r^2}}\right)^{-\beta} \bigr\}
= \begin{cases} 	(1-r^2)^{\frac{\alpha}{2}}s^{-\alpha} \qquad &  0< s <\sqrt{1-r^2};\\
  (1-r^2)^{\frac{\beta}{2}}s^{-\beta}, \qquad &s \geq \sqrt{1-r^2}.
\end{cases}
$$
Combining \eqref{first-est-involv-H} and (\ref{scnd-est-involv-H}), we can estimate     
\begin{equation}
  \label{eq:Q-I-i-est}
\left\| Q \wdc{F}_{\sigma} \right\|_{q}^q \leq C \Bigl(I^{(1)}_k+I^{(2)}_k\Bigr)
\end{equation}
with
\begin{equation}
  \label{eq:def-I-k-i}
I^{(i)}_k:= \int\limits^{1}_{0} r^{(N-k-1)\frac{q}{2} -(N-k)}(1-r^2)^{\frac{q}{4}(k-2) - \frac{k}{2}}H^{(i)}_k(r)~dr \qquad\text{for $i=1,2$}
\end{equation}
and
\begin{align*}
	H^{(1)}_{k}(r)&:=\int\limits^{\infty}_{\sqrt{1-r^2}}s^{k-1}(1+s)^{-\frac{q}{2}(k-1)} \int\limits^{ar (1-r^2)^{\frac{\beta}{2}}s^{-\beta}}_{0} t^{N-k-1}(1+t)^{-\frac{q}{2}(N-k-1)}~dt~ds,\\
	H^{(2)}_{k}(r)&:=\int\limits^{\sqrt{1-r^2}}_{0}s^{k-1}(1+s)^{-\frac{q}{2}(k-1)} \int\limits^{ar (1-r^2)^{\frac{\alpha}{2}}s^{-\alpha}}_{0} t^{N-k-1}(1+t)^{-\frac{q}{2}(N-k-1)}~dt~ds.	
	\end{align*}
        We first estimate $I^1_{k}$, and we note that
\begin{align}
  H_k^{(1)}(r)& \le \int \limits^{\infty}_{\sqrt{1-r^2}}s^{(1-\frac{q}{2})(k-1)} \int\limits^{ar (1-r^2)^{\frac{\beta}{2}}s^{-\beta}}_{0} t^{N-k-1}~dt~ds \nonumber \\
  & \le C r^{N-k}(1-r^2)^{\frac{\beta}{2}(N-k)} \int \limits^{\infty}_{\sqrt{1-r^2}}s^{(1-\frac{q}{2})(k-1)-\beta(N-k)}~ds \le C r^{N-k}(1-r^2)^{(1-\frac{q}{2})\frac{k-1}{2}+\frac{1}{2}} \label{H-k-1-r-est}
\end{align}
for $0<r<1$. Here we used in the last step that
$(1-\frac{q}{2})(k-1)-\beta(N-k)<-1$ since by (\ref{eq:assumption-alpha-diff-beta}) we have
$$
\beta >\frac{1}{N-1}\quad \text{in case $k=1$}\qquad \text{and}\qquad  q>\frac{2k-2\beta(N-k)}{k-1}\quad \text{if $2 \le k \le N-1$.}
$$
Combining (\ref{eq:def-I-k-i}) and (\ref{H-k-1-r-est}), we conclude that 
$$
I^{(1)}_k \le C \int\limits^{1}_{0} r^{(N-k-1)\frac{q}{2}}(1-r^2)^{\frac{q}{4}(k-2)- \frac{k}{2} + (1-\frac{q}{2})\frac{k-1}{2}+\frac{1}{2}}dr  = 
C \int\limits^{1}_{0} r^{(N-k-1)\frac{q}{2}}(1-r^2)^{-\frac{q}{4}}dr,
$$
where the integral on the RHS is finite due to (\ref{eq:q-less-four-add-assumption}). As a consequence, $I^{(1)}_{k} \le C_{1}$ with a constant $C_{1}=C_{1}(N,k,\beta,a)>0$.\\
Next we consider $I^{(2)}_k$, and we distinguish the following cases.\\[0.1cm]
{ \bf Case 1:} $q(N-k-1) < 2(N-k)$.\\
In this case we estimate as follows:
\begin{align*}
	H^{(2)}_{k}(r)&\le \int\limits^{\sqrt{1-r^2}}_{0}s^{k-1} \int\limits^{ar (1-r^2)^{\frac{\alpha}{2}}s^{-\alpha}}_{0} t^{N-k-1-\frac{q}{2}(N-k-1)}~dt~ds\\ 	
&\leq  C r^{N-k-\frac{q}{2}(N-k-1)}(1-r^2)^{\frac{\alpha}{2}(N-k) - \frac{\alpha}{2}\frac{q}{2}(N-k-1)}\int\limits^{\sqrt{1-r^2}}_{0}s^{k-1-\alpha(N-k) + \alpha\frac{q}{2}(N-k-1)}~ds \\
&=
Cr^{N-k-\frac{q}{2}(N-k-1)}(1-r^2)^{\frac{k}{2}} \qquad \text{for $r \in (0,1)$.}
\end{align*}
Here we used in the last step that $k-1-\alpha(N-k) + \alpha\frac{q}{2}(N-k-1) >-1$ since by (\ref{eq:assumption-alpha-diff-beta}) we have
$$
q>\frac{2(N-k)- \frac{2k}{\alpha}}{N-k-1}\quad \text{in case $k<N-1$}\qquad \text{and}\qquad \alpha < N-1 \quad \text{if $k=N-1$.}
$$
We thus conclude that 
\begin{equation}
  \label{eq:I-2-k-first-est}
I^{(2)}_k = \int\limits^{1}_{0}r^{(N-k-1)\frac{q}{2} - (N-k)}(1-r^2)^{\frac{q}{4}(k-2) - \frac{k}{2}}H^{(2)}_{k}(r)~dr \leq
C\int\limits^{1}_{0}(1-r^2)^{\frac{q}{4}(k-2)}~dr < \infty.
\end{equation}
We now consider\\[0.1cm]
{\bf Case 2:} $q(N-k-1) \ge 2(N-k)$.\\
In this case we choose $\delta \in \bigl(\,\frac{q}{2}(N-k-1)-(N-k)\:,\: \min \bigl\{ \frac{q}{2}(N-k-1), \frac{q}{2}(N-k-1)+\frac{k}{\alpha}-(N-k)\bigr \}\,\bigr)$, and we estimate as follows: 
\begin{align*}
  &H^{(2)}_{k}(r)\le
\int\limits^{\sqrt{1-r^2}}_{0}s^{k-1} \int\limits^{ar (1-r^2)^{\frac{\alpha}{2}}s^{-\alpha}}_{0} t^{N-k-1}(1+t)^{\delta-\frac{q}{2}(N-k-1)}~dt~ds\\	
&\le              \int\limits^{\sqrt{1-r^2}}_{0}s^{k-1} \int\limits^{ar (1-r^2)^{\frac{\alpha}{2}}s^{-\alpha}}_{0} t^{N-k-1+\delta-\frac{q}{2}(N-k-1)}~dt~ds\\ 	
&\leq  C r^{N-k+\delta -\frac{q}{2}(N-k-1)}(1-r^2)^{\frac{\alpha}{2}(N-k+\delta) - \frac{\alpha}{2}\frac{q}{2}(N-k-1)}\int\limits^{\sqrt{1-r^2}}_{0}s^{k-1-\alpha(N-k+\delta) + \alpha\frac{q}{2}(N-k-1)}~ds \\
&=
Cr^{N-k+\delta-\frac{q}{2}(N-k-1)}(1-r^2)^{\frac{k}{2}} \qquad \text{for $r \in (0,1)$.}
\end{align*}
We thus conclude that
\begin{equation}
  \label{eq:I-2-k-second-est}
I^{(2)}_k = \int\limits^{1}_{0}r^{(N-k-1)\frac{q}{2} - (N-k)}(1-r^2)^{\frac{q}{4}(k-2) - \frac{k}{2}}H^{(2)}_{k}(r)~dr \leq
C\int\limits^{1}_{0}r^\delta (1-r^2)^{\frac{q}{4}(k-2)}~dr < \infty.
\end{equation}
Combining (\ref{eq:I-2-k-first-est}) and (\ref{eq:I-2-k-second-est}), we conclude that  
$I^{(2)}_{k} \le C_{2}$ with a constant $C_{2}=C_{2}(N,k,\alpha,a)>0$.\\
Going back to \eqref{eq:Q-I-i-est}, we deduce that
$$
\left\| Q \wdc{F}_{\sigma} \right\|_{q}^q \leq C(C_1 + C_2), 
$$
where the constant on the RHS only depends on $N,k,\alpha,\beta$ and $a$. The proof is thus finished.
\end{proof}

We note that Theorem~\ref{theo:intro1-alpha-equals-beta} is a direct consequence of Theorem~\ref{theo:alpha-diff-beta-section}, since the assumptions of Theorem~\ref{theo:intro1-alpha-equals-beta} imply those of Theorem~\ref{theo:alpha-diff-beta-section} in the case $\alpha= \beta$. 

Moreover, we have the following duality property. 

\begin{lemma}
\label{dual-restriction-est}
Suppose that a closed subgroup $G \subset O(N)$, $q \ge 1$, and $Q \in L^\infty_G(\R^N)$ are given with the property that $(G,q,Q)$ is an admissible extension pair in the sense of Definition~\ref{sec:introduction-definition}. Then there exists a constant $C>0$ with 
$$
\bigl\|\wdh{Qf}\big|_{\SPN}\bigr\|_{L^2(\SPN)} \le C \|f\|_{q'} \qquad \text{for every $f \in \cS_G$.}
$$
In particular, this holds if $G=G_k$ and $N,k,\alpha,\beta,q$ and $Q$ satisfy the assumptions of Theorem~\ref{theo:alpha-diff-beta-section}.
\end{lemma}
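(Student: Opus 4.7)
The plan is to prove the inequality by a straightforward duality argument, paired with the observation that Fourier transform respects the action of any subgroup $G \subset O(N)$. For the final assertion, we note that Theorem~\ref{theo:alpha-diff-beta-section} directly supplies the admissible extension triple hypothesis.

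First I would verify that for every $f \in \cS_G$, the function $g := \wdh{Qf}\big|_{\SPN}$ is well defined and lies in $L^2(\SPN)$: since $Q \in L^\infty$ and $f$ is Schwartz, $Qf \in L^1(\RN)$, so $\wdh{Qf}$ is continuous and bounded, and therefore its restriction to $\SPN$ is a continuous, hence $L^2$, function on the compact sphere. More importantly, since both $Q$ and $f$ are $G$-invariant and the Fourier transform commutes with the action of $O(N)$, the function $g$ is itself $G$-invariant; consequently
\[
\|g\|_{L^2(\SPN)} = \sup\Bigl\{\bigl|\langle F,g\rangle_{L^2(\SPN)}\bigr| \,:\, F \in L^2(\SPN)\ \text{is $G$-invariant with}\ \|F\|_{L^2(\SPN)} \le 1\Bigr\},
\]
the supremum being attained at $F = g/\|g\|_{L^2(\SPN)}$ (or being zero).

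Next, for an arbitrary $G$-invariant $F \in L^2(\SPN)$, I would compute the pairing $\langle F,g\rangle_{L^2(\SPN)}$ by writing out the Fourier transform defining $g$ and applying Fubini: since $\|F\|_{L^1(\SPN)} \,\|Qf\|_{L^1(\RN)} < \infty$, the switch is justified, and one obtains
\[
\langle F,g\rangle_{L^2(\SPN)} \;=\; \int_{\RN}\overline{Q(x)}\,\overline{f(x)}\,\wdc{F}_\sigma(x)\,dx.
\]
Applying Hölder's inequality in the form $L^{q'}\cdot L^q$ and using $|\overline{Q}\,\wdc{F}_\sigma| = |Q\wdc{F}_\sigma|$ pointwise, together with the admissible extension triple inequality~(\ref{eqn:intro-defi}), gives
\[
\bigl|\langle F,g\rangle_{L^2(\SPN)}\bigr| \;\le\; \|f\|_{q'}\,\|Q\wdc{F}_\sigma\|_{q} \;\le\; C\,\|f\|_{q'}\,\|F\|_{L^2(\SPN)}.
\]
Taking the supremum over $G$-invariant $F$ with $\|F\|_{L^2(\SPN)}\le 1$ and invoking the $G$-invariance of $g$ noted above yields the claimed bound.

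There is no serious obstacle here; the only points requiring a small amount of care are the justification of Fubini (handled by the $L^1$ integrability of $Qf$ and finiteness of $\sigma(\SPN)$) and the reduction of the $L^2(\SPN)$-norm of $g$ to a supremum over $G$-invariant test functions, which is exactly where $G$-invariance of $Q$ is used. The final sentence of the lemma is then immediate by applying Theorem~\ref{theo:alpha-diff-beta-section} to produce the admissible extension triple $(G_k,q,Q)$ under the stated hypotheses.
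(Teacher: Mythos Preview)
Your proof is correct and follows essentially the same duality argument as the paper. The only cosmetic difference is that the paper takes $F := \wdh{Qf}\big|_{\SPN}$ directly and computes $\|F\|_{L^2(\SPN)}^2 = \langle F,F\rangle_{L^2(\SPN)}$, arriving at $\|F\|_{L^2}^2 \le C\|f\|_{q'}\|F\|_{L^2}$ and dividing through, whereas you phrase the same step as a supremum over $G$-invariant test functions; your explicit remarks on Fubini and on the $G$-invariance of $g$ (which the paper uses tacitly when applying the extension bound to $F$) are welcome clarifications.
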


\begin{proof}
Let $f \in \cS_G$ and $F:= \wdh{Qf}\big|_{\SPN} \in L^2(\SPN)$. Then we have 
\begin{align*}
&\|F\|_{L^2(\SPN)}^2 = \int\limits_{\SPN}\wdh{Qf}\overline{F}~d\sigma
	=
	(2\pi)^{-\frac{N}{2}}\int\limits_{\SPN} \int\limits_{\RN}e^{-ix\theta}f(x)Q(x)~dx~\overline{F(\theta)}~d\sigma(\theta) \\
	&=(2\pi)^{-\frac{N}{2}} \int\limits_{\RN}f(x) \overline{\overline{Q(x)}\int\limits_{\SPN}e^{ix\theta}F(\theta)~d\sigma(\theta)}~dx \le \|f\|_{q'} \|Q \check{F_\sigma}\|_{q} \le C \|f\|_{q'} \|F\|_{L^2(\SPN)}
\end{align*}
and therefore $\|F\|_{L^2(\SPN)}\le C \|f\|_{q'}$, as claimed.
\end{proof}

\section{Resolvent estimates for $G$-invariant functions}
\label{sec:resolv-estim-g}

For $N \geq 3$, the radial outgoing fundamental solution of the Helmholtz equation $-\Delta u-u=\delta_0$ in $\RN$ is given by
\begin{equation}\label{fundsol-section}
\Phi(x):= \frac{\ri}{4}(2\pi|x|)^{\frac{2-N}{2}}H^{(1)}_{\frac{N-2}{2}}(|x|), \quad\text{for }x\in\RN\backslash\{0\},
\end{equation}
where $H^{(1)}_{\frac{N-2}{2}}$ denotes the Hankel function of the first kind of order $\frac{N-2}{2}$.
For a function $f \in \cS(\RN)$ the convolution $u := \Phi\ast f \in \mathcal{C}^{\infty}(\RN)$ is a solution 
of the inhomogeneous Helmholtz equation $-\Delta u - u = f$ which satisfies the Sommerfeld 
outgoing radiation condition $\partial_ru(x)-\ri u(x)=o(|x|^{\frac{1-N}{2}})$, as $|x|\to\infty$.
Moreover, it is known (see \cite{Gelfand1964}) that, in the sense of tempered distributions, the Fourier transform of $\Phi$
is given by
\begin{equation}\label{eqn:Fourier_Phi}
\wdh{\Phi}(\xi)=(2\pi)^{-\frac{N}2}\frac{1}{|\xi|^2-(1+\ri 0)}:=(2\pi)^{-\frac{N}2}\lim_{\eps\to 0^+}\frac{1}{|\xi|^2-(1+\ri \eps)}.
\end{equation}
As a consequence of a classical estimate of Kenig, Ruiz and Sogge (see \cite[Theorem 2.3]{kenig1987}), the mapping $f\mapsto  \Phi \ast f$ for $f \in \SR(\RN)$ extends as a continuous linear operator
$$ \cR : L^{p'}(\RN) \to L^{p}(\RN) $$
for $\frac{2(N+1)}{N-1} \leq p \leq \frac{2N}{N-2}$. Moreover, non-selfdual $(L^{r},L^{p'})$-estimates for $\cR$ were established by Guti\'errez in \cite[Theorem 6]{Gutierrez2004}. The aim of this section is to establish a similar estimate for the operator $\cR_{\sQ}$ defined by $\cR_{\sQ}(f):= f \mapsto Q [\Phi \ast (Qf)]$, where $G$ is a closed subgroup of $O(N)$ and $Q \in L^\infty_G(\R^N)$ is a weight function. The main result of this section is the following generalization of Theorem~\ref{resolvent-estimates-intro}. 
\begin{theorem}
\label{resolvent-estimates-non-self-dual}
Let $N \ge 3$, let $G \subset O(N)$ be a closed subgroup, let $Q \in L^\infty_G(\R^N)$, and let $q \in \bigl[1,\frac{2(N+1)}{N-1}\bigr]$ be such that $(G,q,Q)$ is an admissible extension triple. Moreover, let $p,r \in (1,\infty)$ satisfy
\begin{equation}
  \label{eq:p-r-condition-non-self-dual-general}
  \frac{N-2}{N} \: \le \: \frac{1}{p}+\frac{1}{r} \: <  \: \frac{q+2}{2q}\,\frac{N-1}{N}.
\end{equation}  
Suppose moreover that 
\begin{equation}
  \label{eq:p-r-condition-non-self-dual-q-greater-2}
  \max\left\lbrace\frac{1}{r}, \frac{1}{p}\right\rbrace < \frac{N-1}{2N} \qquad \text{if $q \ge 2$,}
\end{equation}
and that 
\begin{equation}
  \label{eq:p-r-condition-non-self-dual-q-smaller-2}
  \frac{2q}{(N-1)(2-q)}\frac{1}{p}- \frac{(N-1)q-2(N-3)}{2N(2-q)}  < \frac{1}{r}< \frac{(N-1)(2-q)}{2q} \frac{1}{p}+ \frac{(N-1)[(N-1)q-2(N-3)]}{4qN}
  \end{equation}
if $q<2$. Then there exists  $C>0$ such that 
\begin{equation}
\label{resolvent-estimates-non-self-dual-conclusion}
        \left \|\cR_\sQ f \right\|_{r} \leq C \left\|f\right\|_{p'} \qquad \text{for all functions $f \in \cS_{G}(\RN)$.}
\end{equation}
\end{theorem}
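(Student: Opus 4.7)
The plan is to extend the classical Kenig--Ruiz--Sogge/Gutiérrez argument (see \cite{kenig1987,Gutierrez2004,Evequoz2015}) to the weighted, $G$-invariant setting, with the admissible extension triple hypothesis on $(G,q,Q)$ and its dual version in Lemma~\ref{dual-restriction-est} taking the place of the Stein--Tomas bound.

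The first step is to split the Fourier symbol $\wdh\Phi(\xi)=(2\pi)^{-\frac{N}{2}}(|\xi|^2-1-\ri 0)^{-1}$ into a bulk part and a sphere-concentrated part. Fixing a cutoff $\chi\in\ceinftyc(\R)$ equal to $1$ on $[-1/2,1/2]$ and supported in $[-1,1]$, set $m_1(\xi)=\chi(|\xi|^2-1)\wdh\Phi(\xi)$ and $m_0=\wdh\Phi-m_1$. The bulk multiplier $m_0$ is smooth and decays like $|\xi|^{-2}$, so the associated operator is a bounded multiple of a Bessel potential of order $2$. Together with $\|Q\|_\infty<\infty$, the Hardy--Littlewood--Sobolev inequality yields $L^{p'}\to L^r$ bounds for every pair with $\tfrac{1}{p}+\tfrac{1}{r}\ge \tfrac{N-2}{N}$; this precisely accounts for the left endpoint in (\ref{eq:p-r-condition-non-self-dual-general}).

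The second step handles the sphere-localized piece $m_1$ by a dyadic decomposition in the distance to $\SPN$: write $m_1=\sum_{j\ge 0}\psi_j$ with $\spt\psi_j\subset A_j:=\{\bigl||\xi|^2-1\bigr|\sim 2^{-j}\}$ and $\|\psi_j\|_\infty\lesssim 2^j$, and let $T_j$ be the convolution operator with Fourier multiplier $\psi_j$. Plancherel gives
$$
\|Q T_j(Qf)\|_2\le C\,2^j\,\|f\|_2 .
$$
For the extension-side endpoint, the coarea formula parametrises $A_j$ by dilated spheres $\sqrt{1+\tau}\,\SPN$ with $|\tau|\sim 2^{-j}$; on each such sphere the extension-triple hypothesis and its dual (Lemma~\ref{dual-restriction-est}) combined in a $TT^*$ fashion, together with the $G$-invariance of $Q$ (which survives the rescaling $\xi\mapsto\xi/\sqrt{1+\tau}$ and is uniform for $|\tau|\le 1/2$), should yield
$$
\|Q T_j(Qf)\|_q\le C\,2^{-j}\|f\|_{q'} .
$$
Complex interpolation between these two endpoints produces, for each admissible pair $(p,r)$, a layer bound $\|Q T_j(Qf)\|_r\le C\,2^{j\gamma(p,r,q)}\|f\|_{p'}$ with $\gamma(p,r,q)$ an explicit affine function of $1/p$, $1/r$ and $1/q$. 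Summing the geometric series in $j$ is possible precisely when $\gamma(p,r,q)<0$, which translates into the upper constraint in (\ref{eq:p-r-condition-non-self-dual-general}) together with (\ref{eq:p-r-condition-non-self-dual-q-greater-2}) or (\ref{eq:p-r-condition-non-self-dual-q-smaller-2}).

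The main obstacle is the precise identification of $\gamma(p,r,q)$ and the separation into the two cases $q\ge 2$ and $q<2$. When $q\ge 2$ the extension triple is only mildly better than Stein--Tomas, and one can factor each side of the $TT^*$ interpolation through $L^2$ separately; this produces \emph{individual} constraints on $1/p$ and $1/r$ and hence (\ref{eq:p-r-condition-non-self-dual-q-greater-2}). When $q<2$ the extension triple strictly improves on Stein--Tomas and the trivial $L^2$ endpoint becomes the bottleneck on each layer; one must instead interpolate directly between two off-diagonal extension/dual-extension endpoints, yielding the \emph{coupled} constraint (\ref{eq:p-r-condition-non-self-dual-q-smaller-2}) that mixes $1/p$ and $1/r$. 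A secondary technical point is verifying that the weighted extension estimate transfers uniformly from $\SPN$ to each nearby dilate $\sqrt{1+\tau}\,\SPN$, which requires a careful bookkeeping of the Jacobian and of the $L^\infty$-norm of $Q(\cdot/\sqrt{1+\tau})$ but no new inequality beyond the assumption that $(G,q,Q)$ is an admissible extension triple.
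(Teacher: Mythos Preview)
Your overall strategy---split off a smooth bulk multiplier (handled by Hardy--Littlewood--Sobolev) and treat the sphere-concentrated piece via a dyadic decomposition plus interpolation---is indeed the one used in the paper. But your implementation has a genuine gap at the key step, and it is not the same decomposition the paper uses.

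You decompose $m_1$ \emph{in frequency}, into shells $A_j=\{||\xi|^2-1|\sim 2^{-j}\}$, and claim the layer bound
\[
\|Q\,T_j(Qf)\|_q \le C\,2^{-j}\|f\|_{q'}.
\]
This is off by a factor $2^{j}$. Writing $T_j(Qf)(x)=\int_{|\rho^2-1|\sim 2^{-j}}\tilde\psi_j(\rho)\,\rho^{N-1}\,E_\rho\bigl[\widehat{Qf}\big|_{\rho\SPN}\bigr](x)\,d\rho$ and applying Minkowski together with the extension estimate and its dual on each sphere (your ``$TT^*$ fashion''), you obtain
\[
\|Q\,T_j(Qf)\|_q \;\lesssim\; \int_{|\rho^2-1|\sim 2^{-j}} |\tilde\psi_j(\rho)|\,\|f\|_{q'}\,d\rho \;\lesssim\; 2^{j}\cdot 2^{-j}\,\|f\|_{q'} \;=\;C\|f\|_{q'},
\]
since the multiplier amplitude $\sim 2^{j}$ exactly cancels the shell width $\sim 2^{-j}$. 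There is no further mechanism in your argument to produce decay in $j$. With the correct $O(1)$ bound at $(q',q)$ and your Plancherel bound $2^{j}$ at $(2,2)$, every interpolated layer estimate satisfies $\gamma(p,r,q)\ge 0$, and the geometric series in $j$ diverges. The same problem persists if you add the $L^1\to L^\infty$ endpoint: the kernel of $T_j$ satisfies $|K_j(x)|\lesssim (1+|x|)^{-(N-1)/2}$ uniformly in $j$ (again amplitude $2^j$ cancels width $2^{-j}$), so $\|T_j\|_{L^1\to L^\infty}\lesssim 1$ as well.

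The paper avoids this by decomposing \emph{in physical space}: with $\Phi_1=\psi*\Phi$ one has the pointwise bound $|\Phi_1(x)|\le C(1+|x|)^{(1-N)/2}$, and one sets $\Phi_1^j=\psi_j\Phi_1$ with $\psi_j$ supported on the dyadic annulus $|x|\sim 2^{j}$. This yields the two endpoints
\[
\|Q[(\Phi_1^j*\varphi)*(Qf)]\|_\infty \le C\,2^{-j(N-1)/2}\|f\|_1,
\qquad
\|Q[(\Phi_1^j*\varphi)*(Qf)]\|_2 \le C\,2^{j/2}\|f\|_{q'},
\]
the second coming from Plancherel, the radiality of $\widehat{\Phi_1^j}$, the localisation of $\widehat\varphi$ near $\SPN$, and the dual restriction bound of Lemma~\ref{dual-restriction-est} applied on each sphere $\rho\SPN$ with $\rho\in[\tfrac14,\tfrac74]$. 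Complex interpolation (Lemma~\ref{lem:intpol}, which also uses the dualised estimate $L^2\to L^q$) then gives a layer bound $C\,2^{jA_{r,p,q}/2}$ with $A_{r,p,q}=\tfrac{2qN}{q+2}(\tfrac1r+\tfrac1p)-(N-1)$, which is negative precisely under the upper bound in (\ref{eq:p-r-condition-non-self-dual-general}). The point is that the \emph{physical-space} cutoff makes the kernel decay $2^{-j(N-1)/2}$ visible as an $L^1\to L^\infty$ gain; your frequency-space layers lose this information.

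A second, smaller gap: Proposition~\ref{prop:resolventestu3} only covers the cone $\tfrac{d_{q,1}}{p}\le \tfrac1r\le \tfrac{d_{q,2}}{p}$ intersected with (\ref{eq:p-r-condition-non-self-dual-general}). The full trapezoid in the statement---and in particular the distinction between the cases (\ref{eq:p-r-condition-non-self-dual-q-greater-2}) and (\ref{eq:p-r-condition-non-self-dual-q-smaller-2})---is obtained in the paper by a further interpolation with the Kenig--Ruiz--Sogge endpoint segment between $(\tfrac{N-3}{2N},\tfrac{N-1}{2N})$ and $(\tfrac{N-1}{2N},\tfrac{N-3}{2N})$ (cf.\ \cite[Lemma~2.2(b)]{kenig1987}). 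Your last two paragraphs gesture at this but do not identify the extra ingredient.
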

\begin{remark}
  \label{non-self-dual}
  (i) In the special case $r = p$, the assumptions of Theorem~\ref{resolvent-estimates-non-self-dual} reduce to \mbox{$p \in \bigl(\frac{2N}{N-1}\,\frac{2q}{q+2}\,,\, \frac{2N}{N-2}\bigr]$.} Hence Theorem~\ref{resolvent-estimates-intro} follows directly from Theorem~\ref{resolvent-estimates-non-self-dual}.\\
  (ii) The assumption $q \in \bigl[1,\frac{2(N+1)}{N-1}\bigr]$ implies that condition (\ref{eq:p-r-condition-non-self-dual-general}) covers the (nonempty) condition $\frac{N-2}{N} \le  \frac{1}{p}+\frac{1}{r} <  \frac{N-1}{N+1}$ considered in \cite[Theorem 6]{Gutierrez2004}.\\
  (iii) Geometrically, the conditions (\ref{eq:p-r-condition-non-self-dual-general}), (\ref{eq:p-r-condition-non-self-dual-q-greater-2}) and (\ref{eq:p-r-condition-non-self-dual-q-smaller-2}) can be formulated for the point $(\frac{1}{p
  },\frac{1}{r})$ to be contained in the trapezoid in $(0,1) \times (0,1)$ spanned by the points
  $$
  (\frac{N-3}{2N},\frac{N-1}{2N}), \quad (\frac{N-1}{qN},\frac{N-1}{2N}),\quad (\frac{N-1}{2N},\frac{N-3}{2N})\quad \text{and}\quad (\frac{N-1}{2N},\frac{N-1}{qN}),
  $$
  with part of the boundary being excluded due to the fact that some of these inequalities are strict. In the case $q= 2$, this trapezoid degenerates to a triangle.
\end{remark}

\begin{figure}[H]
  \centering
 \captionsetup[subfloat]{labelformat=empty}
  \subfloat[][]{\includegraphics[width=0.35\linewidth]{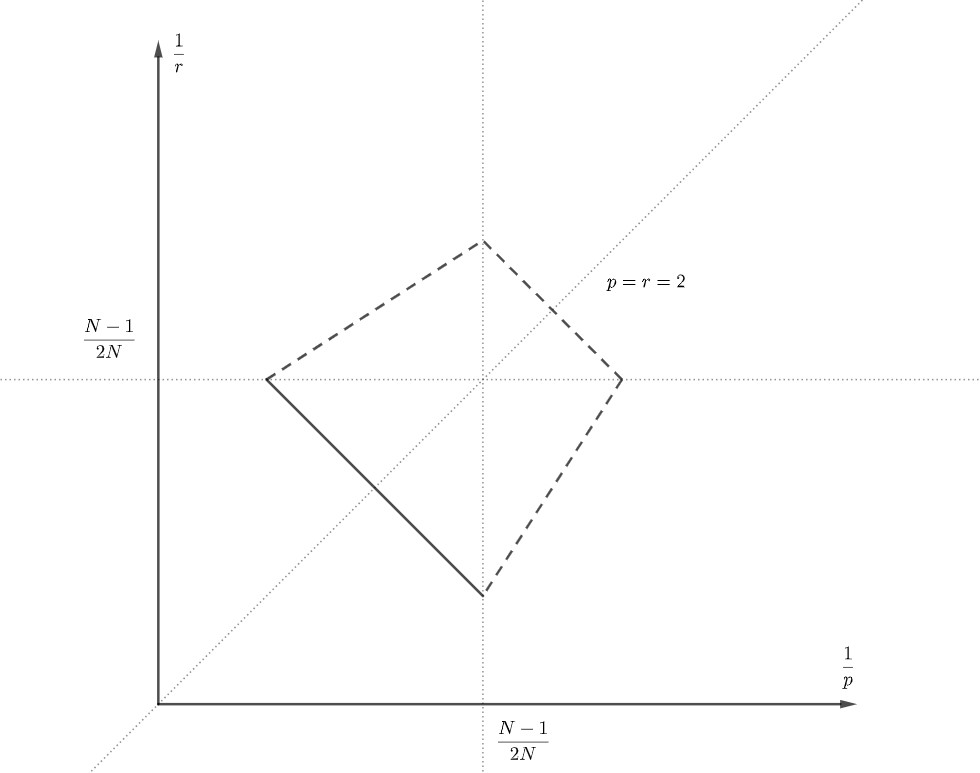}}
  \qquad
  \subfloat[][]{\includegraphics[width=0.35\linewidth]{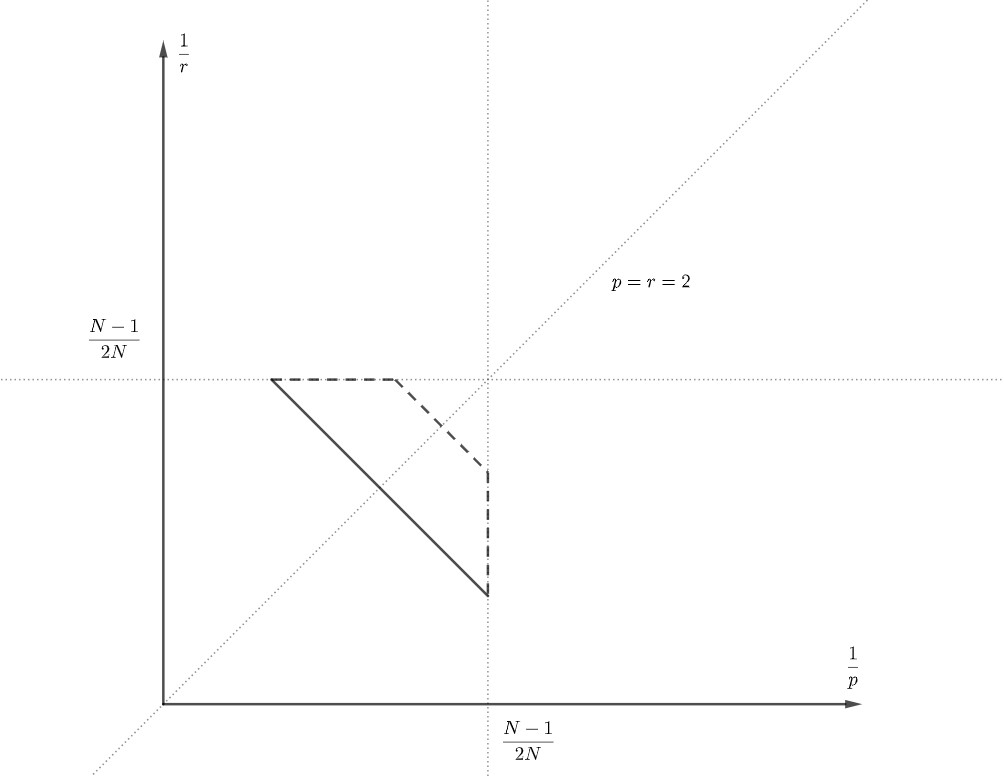}}
  \caption{Reflected Riesz diagrams for $1\leq q < 2$ (left) and $q>2$ (right)}
\end{figure}
%\begin{figure}[H]
%\centering
%  	\includegraphics[width=0.57\textwidth]{diagram.jpg}
%  	\caption{Riesz diagram according to Theorem \ref{resolvent-estimates-non%-self-dual} }
%  \end{figure} 

In order to prove Theorem~\ref{resolvent-estimates-non-self-dual}, we adapt the strategy of \cite{Gutierrez2004} and \cite{kenig1987}, see also \cite{Evequoz2015}. Throughout the remainder of this section, we fix a closed subgroup $G \subset O(N)$ and $Q \in L^\infty_G(\R^N)$. We first note the following lemma which is a basic consequence of complex interpolation.

\begin{lemma}\label{lem:intpol}
 Let $1 \leq  q< \infty$ and let $\rho \in \cS$ be a radial function. Suppose furthermore that 
\begin{align}
\|Q [\rho *  (Qu)] \|_{2} &\leq C D \left\|u \right\|_{q'}&& \text{and} \label{interpol-first}\\
\|Q [\rho *  (Qu)] \|_{\infty} &\leq C D^{1-N} \left\|u\right\|_{1}  \label{interpol-second}
&& \text{for all $u \in \cS_{G}(\RN)$}
\end{align}
with constants $C,D>0$. Suppose furthermore that $p,r \in (1,\infty)$ satisfy
\begin{equation}
  \label{eq:complex-interpolation-non-selfdual-assumption}
\frac{d_{q,1}}{p}  \le \frac{1}{r} \le \min \left\lbrace\frac{1}{2}+ \frac{1}{q}-\frac{1}{p}, \frac{d_{q,2}}{p}  \right\rbrace,
\end{equation}
where
\begin{equation}
  \label{eq:def-d-q-i}
d_{q,1} = \min\left\lbrace\frac{q}{2},\frac{2}{q}\right\rbrace\qquad \text{and} \qquad d_{q,2} = \max\left\lbrace\frac{q}{2},\frac{2}{q}\right\rbrace.
\end{equation}
Then we have 
\begin{equation}
  \label{eq:lem-intpol-claim}
\left\|Q \bigl[\rho *  (Qu)\bigr]  \right\|_{r} \leq C D^{A_{r,p,q}} \left\|u\right\|_{p'} \qquad \text{for all $u \in \cS_{G}(\RN)$,} 
\end{equation}
with
\begin{equation}
  \label{eq:def-A-q-r-p}
A_{r,p,q}:= \frac{2 q N}{q+ 2}\Bigl(\frac{1}{r}+\frac{1}{p}\Bigr) -(N-1).
\end{equation}
\end{lemma}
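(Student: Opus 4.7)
The plan is to derive (\ref{eq:lem-intpol-claim}) via Riesz--Thorin complex interpolation applied to three endpoint estimates: the two hypotheses (\ref{interpol-first})--(\ref{interpol-second}) together with a third estimate obtained from (\ref{interpol-first}) by duality. Writing $Tu := Q[\rho \ast (Qu)]$, the radial symmetry of $\rho$ together with Fubini's theorem yields the self-duality identity $\int (Tu)v\,dx = \int u(Tv)\,dx$ for all $u, v \in \cS$. Combined with the observation that, by $G$-averaging over the compact group $G$, the $L^q$-norm of a $G$-invariant function may be computed by pairing against $G$-invariant test functions, this produces from (\ref{interpol-first}) the dual endpoint
\[
\|Tu\|_q \le CD\|u\|_2 \qquad \text{for every } u \in \cS_G.
\]
For $q > 1$ this follows by a routine density argument; for $q = 1$ it instead requires a mollification-and-truncation approximation of the $L^\infty_G$ test function by a weakly-$*$ convergent sequence in $\cS_G$ with uniformly bounded $L^\infty$-norm.

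The three endpoint bounds now correspond to the points
\[
V_1 = (1/q, 1/2), \qquad V_2 = (1/2, 1/q), \qquad V_3 = (0,0)
\]
in the $(1/p, 1/r)$-plane, carrying the norm bounds $CD$, $CD$, and $CD^{1-N}$ respectively. A direct verification shows that $\mathrm{conv}(V_1, V_2, V_3)$ coincides with the region cut out by (\ref{eq:complex-interpolation-non-selfdual-assumption}): the edges $V_1V_3$ and $V_2V_3$ are the two lines through the origin of slopes $q/2$ and $2/q$ (which accounts for the factors $d_{q,1}$ and $d_{q,2}$, whose roles swap according to whether $q \le 2$ or $q \ge 2$), while the edge $V_1V_2$ lies on the line $1/p + 1/r = (q+2)/(2q)$. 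For $(1/p, 1/r)$ inside this triangle, two successive applications of Riesz--Thorin (first along the edges $V_3V_i$, $i = 1,2$, then between the two resulting edge points) yield
\[
\|Tu\|_r \le CD^{M}\|u\|_{p'}, \qquad M = \theta_1 + \theta_2 + (1-N)\theta_3 = 1 - N\theta_3,
\]
where $(\theta_1, \theta_2, \theta_3)$ are the barycentric coordinates of $(1/p, 1/r)$.

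To conclude, summing both coordinates in $\theta_1V_1 + \theta_2V_2 + \theta_3V_3 = (1/p, 1/r)$ gives $\tfrac{1}{p} + \tfrac{1}{r} = (\theta_1 + \theta_2)(\tfrac{1}{q} + \tfrac{1}{2}) = (1-\theta_3)\tfrac{q+2}{2q}$, so $1 - \theta_3 = \tfrac{2q}{q+2}\bigl(\tfrac{1}{p} + \tfrac{1}{r}\bigr)$ and therefore $M = 1 - N + N(1 - \theta_3) = A_{r,p,q}$, proving (\ref{eq:lem-intpol-claim}). The main technical obstacle is the derivation of the dual endpoint in the case $q = 1$, since density of $\cS_G$ in $L^\infty_G$ fails and a careful weak-$*$ approximation is needed; the remainder of the argument is essentially a barycentric computation combined with the standard Riesz--Thorin theorem.
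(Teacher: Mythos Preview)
Your proposal is correct and follows essentially the same route as the paper. The paper likewise obtains the third endpoint $\|Tu\|_q \le CD\|u\|_2$ from the self-duality identity $\int (Tu)v\,dx = \int u(Tv)\,dx$, identifies the admissible region \eqref{eq:complex-interpolation-non-selfdual-assumption} with the closed triangle with vertices $(1/q,1/2)$, $(1/2,1/q)$, $(0,0)$, writes $(1/p,1/r)$ as a convex combination $\lambda V_1 + \mu V_2 + \gamma V_3$, and reads off the exponent $1-\gamma N = A_{r,p,q}$ directly; your barycentric coordinates $(\theta_1,\theta_2,\theta_3)$ are exactly the paper's $(\lambda,\mu,\gamma)$, and your two-step Riesz--Thorin is equivalent to the paper's single three-endpoint interpolation. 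Your remark that the case $q=1$ of the dual endpoint requires a weak-$*$ approximation argument is a point the paper passes over with the phrase ``by duality''.
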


\begin{proof}
Since $\rho \in \cS$ is radial, the convolution with $\rho$ maps $G$-invariant functions to $G$-invariant functions. Moreover, by assumption we have 
$$
\int_{\R^N} v Q \bigl[\rho *  (Qu)\bigr] dx= \int_{\R^N} u Q\bigl[\rho *  (Qv)\bigr]dx
\le \|u\|_{2} \bigl\|Q \bigl[\rho *  (Qv)\bigr]\bigr\|_{2} \le C D \|u\|_{2} \left\|v \right\|_{q'}
$$
for all $u,v \in \cS_{G}$. By duality, we therefore have
\begin{equation}
  \label{interpol-first-dual}
  \left\|Q \bigl[\rho *  (Q u)\bigr] \right\|_{q} \leq C D \left\|u \right\|_{2}\qquad \text{for all $u \in \cS_{G}$.}
\end{equation}
 Note that, by (\ref{eq:complex-interpolation-non-selfdual-assumption}), the point $(\frac{1}{p},\frac{1}{r})$ is contained in the closed symmetric triangle in $\R^2$ spanned by the points $(\frac{1}{q},\frac{1}{2})$, $(\frac{1}{2},\frac{1}{q})$ and $(0,0)$. Hence we can write
\begin{equation}
  \label{eq:convex-combination}
\frac{1}{p} = \frac{\lambda}{q} + \frac{\mu}{2}, \quad \frac{1}{r} = \frac{\lambda}{2} + \frac{\mu}{q} \quad \qquad \text{with} \quad \qquad \lambda,\mu \ge 0,\quad  \lambda+ \mu \le 1.
\end{equation}
With $\gamma:= 1- (\lambda + \mu)$, we thus have
\begin{equation*}
%  \label{eq:convex-combination-dual}
\frac{1}{p'}= \frac{\lambda}{q'}+ \frac{\mu}{2}+ \frac{\gamma}{1},\qquad \frac{1}{r}=\frac{\lambda}{2}+ \frac{\mu}{q} +\frac{\gamma}{\infty},
\end{equation*}
so complex interpolation of the inequalities (\ref{interpol-first}), (\ref{interpol-second}) and (\ref{interpol-first-dual}) gives 
\begin{equation}
\label{non-selfdual-almost-final}  
\left\| Q \bigl[\rho * (Q u)\bigr] \right\|_{r} \leq C D^{\lambda + \mu - \gamma(N-1)} \left\| u \right\|_{p'}= C D^{1 - \gamma N} \left\| u \right\|_{p'} \qquad  \text{for all $u \in \SR_{G}$.}
\end{equation}
Solving (\ref{eq:convex-combination}) yields 
$\lambda + \mu = \frac{2 q }{q+ 2}\Bigl(\frac{1}{r}+\frac{1}{p}\Bigr)$ and therefore $\gamma = 1-  \frac{2 q }{q+ 2}\Bigl(\frac{1}{r}+\frac{1}{p}\Bigr).$ Hence (\ref{eq:lem-intpol-claim}) follows from (\ref{non-selfdual-almost-final}).
\end{proof}

Next we decompose the fundamental solution $\Phi$ as in \cite{Gutierrez2004} and \cite{Evequoz2015}. For this we fix $\psi \in \SR(\RN)$ such that $\wdh{\psi} \in \ceinftyc$ is radial, $0\leq \wdh{\psi} \leq 1$ and $\wdh{\psi} = 1$ for $\left||\xi|-1\right| \leq \frac16$, $\wdh{\psi}(\xi) = 0$ for $\left||\xi|- 1 \right| \geq \frac14.$ We then write 
$$ 
\Phi = \Phi_{1} + \Phi_{2}\qquad \text{with}\quad  \Phi _{1} = \psi \ast \Phi, \quad \Phi_{2} = \Phi -\Phi_{1}. 
$$
Accordingly, we write 
$$
\cR_\sQ = \cR^1_\sQ + \cR^2_\sQ\qquad \text{with}\quad \cR^i_\sQ(f):= Q [\Phi_i \ast (Qf)], \quad i = 1,2.
$$
As shown in \cite[Section 3]{Evequoz2015}, we have
\begin{equation}
 \label{eq:Phi-1-estimate}
|\Phi_1(x)| \le C (1+|x|)^{\frac{1-N}{2}} \qquad \text{for $x \in \R^N$}
\end{equation}
and 
\begin{equation}
 \label{eq:Phi-2-estimate}
|\Phi_2(x)| \le C \min \{|x|^{2-N}, |x|^{-N}\} \qquad 
\text{for $x \in \R^N \setminus \{0\}.$}  
\end{equation}
In particular, by the Hardy Littlewood Sobolev inequality, we have the following. 

\begin{lemma}
\label{R-2-estimate}
For every pair of numbers $p,r \in (1,\infty)$ satisfying
$$
\frac{1}{p}+\frac{1}{r}\ge \frac{N-2}{N},
$$
the convolution operator $f \mapsto \Phi_2 * f$ defines a bounded linear map from $L^{p'}(\R^N) \to L^r(\R^N)$.

Consequently, the operator $\cR^{2}_{\sQ}$ also defines a bounded linear map from $L^{p'}(\R^N) \to L^r(\R^N)$ in this case.
\end{lemma}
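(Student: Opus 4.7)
The proof I envision for Lemma~\ref{R-2-estimate} rests on the pointwise bound (\ref{eq:Phi-2-estimate}) together with the Hardy-Littlewood-Sobolev (HLS) inequality and Young's convolution inequality. The key preliminary fact is that (\ref{eq:Phi-2-estimate}) implies $\Phi_2 \in L^s(\R^N)$ for every $s \in (1, \frac{N}{N-2})$: the singularity $|x|^{2-N}$ near the origin is locally $L^s$-integrable iff $s < \frac{N}{N-2}$, while the decay $|x|^{-N}$ at infinity is integrable iff $s > 1$.

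For the critical endpoint $\frac{1}{p}+\frac{1}{r}=\frac{N-2}{N}$, I would use the pointwise domination
$$
|\Phi_2 \ast f(x)| \le C \int_{\R^N} |x-y|^{2-N} |f(y)|\,dy = C\,(I_2 |f|)(x),
$$
where $I_2$ denotes the Riesz potential of order $2$, and apply the HLS inequality to conclude $\|I_2 |f|\|_{L^r} \le C\|f\|_{L^{p'}}$, since $\frac{1}{p'}-\frac{1}{r}=\frac{2}{N}$ is equivalent to the critical condition. For the remaining range $\frac{1}{p}+\frac{1}{r}>\frac{N-2}{N}$ with $\frac{1}{p}+\frac{1}{r}<1$ (which covers all parameter choices actually arising in the applications of this lemma; cf.\ Remark~\ref{non-self-dual}(iii)), I would pick $s$ via $\frac{1}{s}=\frac{1}{p}+\frac{1}{r}\in(\frac{N-2}{N},1)$, note that $\Phi_2 \in L^s(\R^N)$ by the observation above, and invoke Young's inequality to obtain
$$
\|\Phi_2 \ast f\|_{L^r} \le \|\Phi_2\|_{L^s} \|f\|_{L^{p'}}.
$$

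The statement for $\cR^2_\sQ$ then follows immediately: since $Q \in L^\infty(\R^N)$, multiplication by $Q$ is bounded on every Lebesgue space, and therefore $f \mapsto Q\,[\Phi_2 \ast (Qf)]$ inherits the $L^{p'} \to L^r$-boundedness with norm at most $\|Q\|_\infty^2$ times that of convolution with $\Phi_2$. I do not anticipate any substantial obstacle here; the argument is a textbook application of HLS and Young's inequalities, the only subtle point being to correctly match the integrability range of $\Phi_2$ to the hypothesis on $\frac{1}{p}+\frac{1}{r}$.
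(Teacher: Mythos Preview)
Your proposal is correct and matches the paper's approach: the paper's own proof is the single sentence ``In particular, by the Hardy--Littlewood--Sobolev inequality, we have the following'' preceding the lemma, so your argument via HLS at the endpoint $\frac{1}{p}+\frac{1}{r}=\frac{N-2}{N}$ together with Young's inequality (using $\Phi_2\in L^s$ for $s\in(1,\tfrac{N}{N-2})$) in the range $\frac{N-2}{N}<\frac{1}{p}+\frac{1}{r}<1$ is precisely the intended justification, spelled out in more detail than the paper gives. Your observation that the case $\frac{1}{p}+\frac{1}{r}\ge 1$ is not covered by this argument but also not needed in the applications is accurate and an honest improvement over the paper's presentation.
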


Next we turn to the operator $\cR_\sQ^1$.

\begin{proposition}\label{prop:resolventestu3}
  Let $q \in \bigl[1,\infty)$ be given such that 
$(G,q,Q)$ is an admissible extension triple. Moreover, let $p,r \in (1,\infty)$ satisfy 
\begin{equation}
\label{prop:resolventestu3-assumption}  
\frac{d_{q,1}}{p}  \le \frac{1}{r} \le  \frac{d_{q,2}}{p}  \qquad \text{and}\qquad \frac{1}{p}+\frac{1}{r} <  \frac{q+2}{2q}\,\frac{N-1}{N}
\end{equation}
with $d_{q,1}, d_{q,2}$ defined in (\ref{eq:def-d-q-i}). Then there exists  $C>0$ such that 
	$$
\left \|\cR_\sQ^1 f \right\|_{r} \leq C \left\|f\right\|_{p'} \qquad \text{for all functions $f \in \cS_{G}(\RN)$.}
$$
\end{proposition}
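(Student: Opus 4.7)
The strategy is to invoke Lemma~\ref{lem:intpol} with $\rho=\Phi_1$, after verifying the two endpoint hypotheses (\ref{interpol-first}) and (\ref{interpol-second}) with a common parameter $D$. Since $\Phi_1\notin\cS$, this requires a mild regularization: one can replace $\Phi_1$ by $\rho_n:=\chi_n\Phi_1$ for suitable Schwartz cutoffs $\chi_n\to 1$ and pass to the limit, using uniform constants and the pointwise decay in (\ref{eq:Phi-1-estimate}) together with dominated convergence. Once the two endpoint estimates are in place, the hypothesis (\ref{prop:resolventestu3-assumption}) matches (\ref{eq:complex-interpolation-non-selfdual-assumption}), and the strict inequality $\tfrac{1}{p}+\tfrac{1}{r}<\tfrac{q+2}{2q}\tfrac{N-1}{N}$ amounts to $A_{r,p,q}<0$, so that the interpolation constant $D^{A_{r,p,q}}$ is bounded.

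The estimate (\ref{interpol-second}) is immediate: from (\ref{eq:Phi-1-estimate}) we have $\|\Phi_1\|_\infty<\infty$, so Young's inequality gives
\[
\bigl\|Q[\Phi_1*(Qu)]\bigr\|_\infty \;\le\; \|Q\|_\infty^2\,\|\Phi_1\|_\infty\,\|u\|_1 \;\le\; C\|u\|_1,
\]
which has the form (\ref{interpol-second}) with $D^{1-N}$ of order $1$ (choosing the free constants appropriately).

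The main work, and the principal obstacle, is the $L^{q'}\to L^2$ bound (\ref{interpol-first}) for $f\mapsto Q[\Phi_1*(Qf)]$. Here one exploits the Fourier-localization of $\Phi_1$ near $\SPN$. From (\ref{eqn:Fourier_Phi}), decompose $\Phi_1=\Phi_1^{\mathrm{R}}+i\Phi_1^{\mathrm{I}}$ with $\widehat{\Phi_1^{\mathrm{I}}}=c_N\wdh{\psi}\big|_{\SPN}\,d\sigma$ (the on-shell singular part) and $\widehat{\Phi_1^{\mathrm{R}}}=\wdh{\psi}\cdot\mathrm{PV}\bigl(c_N'/(|\xi|^2-1)\bigr)$, a classical principal-value multiplier that is bounded on every $L^r$, $r\in(1,\infty)$. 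For the singular part, Parseval and the dual restriction estimate of Lemma~\ref{dual-restriction-est} yield
\[
\bigl|\langle \Phi_1^{\mathrm{I}}*(Qu),\,Qv\rangle\bigr| = c\Bigl|\int_{\SPN}\wdh{\psi}(\omega)\,\wdh{Qu}(\omega)\,\overline{\wdh{Qv}(\omega)}\,d\sigma(\omega)\Bigr| \le C\|u\|_{q'}\|v\|_{q'},
\]
i.e.\ a $TT^*$-type $L^{q'}\to L^q$ bound. Reconciling this with the target space $L^2$ --- which is not a natural endpoint of the $TT^*$ dualization --- requires interpolating the $TT^*$ bound with the trivial $L^2\to L^2$ multiplier bound (available from the Mihlin-type reasoning on $\widehat{\Phi_1^{\mathrm{R}}}$ and Plancherel/truncation on the singular part), and combining both contributions; this is the step where the admissibility hypothesis $q\in[1,\tfrac{2(N+1)}{N-1}]$ really enters and which mirrors the Bochner-Riesz-type arguments of \cite{kenig1987,Gutierrez2004,Evequoz2015}.

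With both endpoint estimates secured with common constant $D$, Lemma~\ref{lem:intpol} applied to $\rho=\Phi_1$ delivers the desired bound $\|\cR_\sQ^1 f\|_r\le C\|f\|_{p'}$ for all $(p,r)$ satisfying (\ref{prop:resolventestu3-assumption}). The delicate point throughout is keeping careful track of the $D$-dependence when splitting $\Phi_1$ into its singular and regular parts, so that the combined estimate genuinely feeds into Lemma~\ref{lem:intpol} with a single $D$ (rather than two different ones), which is what the complex interpolation argument in that lemma requires.
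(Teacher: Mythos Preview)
Your plan has a genuine gap: the endpoint estimate (\ref{interpol-first}) simply fails for $\rho=\Phi_1$, so Lemma~\ref{lem:intpol} cannot be applied to $\Phi_1$ with any finite $D$. Indeed, by Plancherel, $\|\Phi_1*(Qu)\|_2 = \|\wdh{\Phi_1}\,\wdh{Qu}\|_2$, and $\wdh{\Phi_1}(\xi)$ has a non-square-integrable singularity of order $(|\xi|^2-1)^{-1}$ on $\SPN$; multiplying by $Q$ on the outside does not cure this. Your proposed split into $\Phi_1^{\mathrm R}+i\Phi_1^{\mathrm I}$ does not help: the ``imaginary'' part (essentially $\wdc{d\sigma}$) is \emph{not} $L^2\to L^2$ bounded (its Fourier multiplier is the singular measure $d\sigma$), and the ``real'' part still carries a $\mathrm{PV}\,(|\xi|^2-1)^{-1}$ singularity which is not a bounded multiplier either. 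The $TT^*$ argument you sketch gives only $L^{q'}\to L^q$ for the singular part, and interpolating that with $L^1\to L^\infty$ never lands on the point $(L^{q'},L^2)$. There is no ``single $D$'' to keep track of; the two endpoint bounds are incompatible for $\Phi_1$ itself.

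The paper circumvents this by a \emph{dyadic decomposition in physical space}: one writes $\Phi_1=\sum_{j\ge 0}\Phi_1^j$ with $\Phi_1^j$ supported in $\{|x|\sim 2^j\}$, and applies Lemma~\ref{lem:intpol} to each piece $\Phi_1^j*\varphi$ separately. The crucial point is that the two endpoint constants move in opposite directions with $j$: from (\ref{eq:Phi-1-estimate}) one gets $\|\Phi_1^j\|_\infty\lesssim 2^{-j(N-1)/2}$ (so (\ref{interpol-second}) holds with $D^{1-N}\sim 2^{-j(N-1)/2}$), while Plancherel combined with Lemma~\ref{dual-restriction-est} gives $\|\Phi_1^j*\varphi*(Qf)\|_2\lesssim \|\Phi_1^j\|_2\|f\|_{q'}\lesssim 2^{j/2}\|f\|_{q'}$. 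Thus $D=2^{j/2}$ works for both endpoints, Lemma~\ref{lem:intpol} yields a bound $C\,2^{jA_{r,p,q}/2}$ for the $j$-th piece, and the hypothesis $A_{r,p,q}<0$ is precisely what makes the sum over $j$ converge. This balancing of growth and decay across scales is the missing idea in your sketch. (Also note that the proposition only assumes $q\in[1,\infty)$; the extra restriction $q\le\tfrac{2(N+1)}{N-1}$ you invoke is neither available nor needed.)
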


\begin{proof}
Let $\varphi \in \SR(\RN)$ be such that $\wdh{\varphi} \in \ceinftyc(\RN)$ is radial, $0\leq \wdh{\varphi} \leq 1$ with $\wdh{\varphi} \equiv 1$ for $\left||\xi| - 1\right| \leq \frac14$ and $\wdh{\varphi} \equiv 0$ for $\left||\xi| - 1\right| \geq \frac12$. By construction of $\Phi_1$, we then have $\wdh{\Phi_1}=\wdh{\Phi_1}\wdh{\varphi}$, which means that $\Phi_1 = (2\pi)^{-\frac{N}{2}}\Phi_1 * \varphi$ 
and therefore 
$$
\cR_\sQ^1 f = Q [ \Phi_{1} \ast (Qf)]  = (2\pi)^{-\frac{N}{2}} Q [\Phi_1 \ast \varphi \ast Qf]
\qquad \text{for $f \in \cS$.} 
$$
Choose $\eta \in \ceinftyc(\RN)$ radial with $0 \le \eta \le 1$, $\eta(x) = 1$ for $|x| \le 1$, $\eta(x) = 0$ for $|x| \geq 2$. Moreover we define 
$\psi_j \in \ceinftyc(\RN)$ by $\psi_0= \eta$ and $\psi_{j}(x) = \eta(2^{-j}x) - \eta(2^{-(j-1)}x)$ for $j \in \N$, $x \in \R^N$. 
Then we have the dyadic composition 
$$
\Phi_1 = \sum_{j=0}^\infty \Phi_1^j \qquad \text{with}\quad \Phi_1^j=  \psi_j \Phi_1
$$
Using (\ref{eq:Phi-1-estimate}), we find that 
$$ 
\left\|\Phi_1^j \right\|_{\infty} \leq C 2^{-\frac{j(N-1)}{2}}, \quad \text{ for all } j,
 $$
where the constant $C>0$ is independent of $j$. Using that $\wdh{\Phi_1^j}$ is radial, we get, with Plancherel's theorem and Lemma~\ref{dual-restriction-est},
	\begin{align*}
	\left\| \bigl(\Phi_1^{j} \ast \varphi\bigr) \ast (Qf) \right\|^{2}_{2} &= C \int\limits_{\R^N} |\wdh{\Phi_1^j}(\xi)\, \wdh{\varphi}\, \wdh{Qf}(\xi)|^2~d\xi \le C \int\limits^{\frac74}_{\frac14}r^{N-1}|\wdh{\Phi_1^j}(r)|^2 \int\limits_{\SPN}|\wdh{Qf}(r\omega)|^2~d\sigma(\omega)~dr\\
	&\leq C \|\Phi_1^j\|^2_2 \left\|Qf\right\|^{2}_{q'} \leq C 2^{j} \left\|Qf\right\|^{2}_{q'} \label{eqn:stuappl} \qquad \text{for all $f \in \cS_{G}$,}
	\end{align*}
where the constant does not depend on $j$. Consequently, we thus have 
$$
\left\|Q \bigl(\Phi_1^{j} \ast \varphi\bigr) \ast (Qf) \right\|_{2} 
 \le C 2^{\frac{j}{2}} \left\|Qf\right\|_{q'} \qquad \text{for all $f \in \cS_{G}$.}
$$
Moreover, we have 
$$
\|\Phi_1^{j} \ast \varphi\|_\infty \le \|\Phi_1^j\|_\infty \|\varphi\|_1 \le C 2^{-\frac{j(N-1)}{2}}, \quad \text{ for all } j,
$$
which implies that 
$$
\left\|Q \bigl(\Phi_1^{j} \ast \varphi\bigr) \ast (Qf) \right\|_\infty \le C 
 2^{-\frac{j(N-1)}{2}} \|Qf\|_1 \le C 2^{-\frac{j(N-1)}{2}} \|f\|_1 \qquad \text{for all $f \in \cS_{G}$.}
$$
Since the assumption (\ref{prop:resolventestu3-assumption}) implies (\ref{eq:complex-interpolation-non-selfdual-assumption}), we may apply Lemma~\ref{lem:intpol} to the radial kernel $\Phi_1^{j} \ast \varphi \in \cS(\R^N)$ and deduce that 
\begin{equation}
  \label{eq:Prop-3-5-proof-intermediate}
\left\|Q \bigl(\Phi_1^{j} \ast \varphi\bigr) \ast (Qf) \right\|_r \le  
C 2^{\frac{j}{2}A_{r,p,q}} \left\|f\right\|_{p'} \qquad \text{for all $f \in \cS_{G}$}
\end{equation}
with $A_{r,p,q}$ given in (\ref{eq:def-A-q-r-p}). By assumption (\ref{prop:resolventestu3-assumption}), we have $A_{r,p,q}<0$. Since, as remarked above, $\Phi_1 = (2\pi)^{-\frac{N}{2}} \Phi_1 \ast \varphi$,
we deduce that 
\begin{align*}
\left \|\cR_\sQ^1 f \right\|_{r}&= \left \|Q \Phi_{1} \ast (Qf) \right\|_{r} = 
(2\pi)^{-\frac{N}{2}} \left \|Q (\Phi_{1} * \varphi) \ast (Qf) \right\|_{r} \\
&\le
(2\pi)^{-\frac{N}{2}} \sum_{j=0}^\infty
\left \|Q (\Phi_{1}^j * \varphi) \ast (Qf) \right\|_{r}
 \le C_0 \|f\|_{p'}  \qquad \text{for all $f \in \cS_{G}$}
\end{align*}
with $C_0 =  C
(2\pi)^{-\frac{N}{2}} \sum \limits_{j=0}^\infty 2^{\frac{j}{2} A_{r,p,q}} <\infty$. The proof is finished.
\end{proof} 

We may now complete the

\begin{proof}[Proof of Theorem~\ref{resolvent-estimates-non-self-dual}]
Let $M$ denote the set of points $(\frac{1}{p},\frac{1}{r}) \in (0,1) \times (0,1)$ such that (\ref{resolvent-estimates-non-self-dual-conclusion}) holds with some constant $C>0$. By combining Lemma~\ref{R-2-estimate} and Proposition~\ref{prop:resolventestu3}, we see that $(\frac{1}{p},\frac{1}{r}) \in M$ if 
\begin{equation}
\label{prop:resolventestu3-assumption-1}  
\frac{d_{q,1}}{p}  \le \frac{1}{r} \le  \frac{d_{q,2}}{p}  \qquad \text{and}\qquad \frac{N-2}{N} \le  \frac{1}{p}+\frac{1}{r} <  \frac{q+2}{2q}\,\frac{N-1}{N}.
\end{equation}
Hence the closure of $M$ contains the points $(\frac{N-1}{qN},\frac{N-1}{2N})$, $(\frac{N-1}{2N},\frac{N-1}{qN})$ and therefore also the line segment between these points. Moreover, by \cite[Lemma 2.2(b)]{kenig1987}, $M$ also contains the open line segment between the points $(\frac{N-3}{2N},\frac{N-1}{2N})$ and $(\frac{N-1}{2N},\frac{N-3}{2N})$.
Hence, if $q \ge 2$, complex interpolation yields that $M$ contains all points $(\frac{1}{p},\frac{1}{r}) \in (0,1) \times (0,1)$ with the property that $\frac{1}{r}, \frac{1}{p} < \frac{N-1}{2N}$ and that 
\begin{equation}
\label{parallel-cond-proof}  
\frac{N-2}{N} \: \le \: \frac{1}{p}+\frac{1}{r} \: <  \: \frac{q+2}{2q}\,\frac{N-1}{N},
\end{equation}
i.e., all points $(\frac{1}{p},\frac{1}{r}) \in (0,1) \times (0,1)$ satisfying (\ref{eq:p-r-condition-non-self-dual-general}) and \eqref{eq:p-r-condition-non-self-dual-q-greater-2}. Hence the theorem is proved in the case $q \ge 2$.\\
If $q<2$, complex interpolation yields that $M$ contains all points $(\frac{1}{p},\frac{1}{r}) \in (0,1) \times (0,1)$ satisfying (\ref{parallel-cond-proof}) and with the property that $(\frac{1}{p},\frac{1}{q})$ lies above the line through the points $(\frac{N-1}{2N},\frac{N-3}{2N})$, $(\frac{N-1}{qN},\frac{N-1}{2N})$ and below the line through the points $(\frac{N-3}{2N},\frac{N-1}{2N})$, $(\frac{N-1}{2N},\frac{N-1}{qN})$. This is precisely the set of points $(\frac{1}{p},\frac{1}{q}) \in (0,1) \times (0,1)$ satisfying (\ref{eq:p-r-condition-non-self-dual-general}) and (\ref{eq:p-r-condition-non-self-dual-q-smaller-2}). The proof is thus also finished in this case. 
\end{proof}

\section{Nonvanishing for $G$-invariant functions}
\label{sec:nonv-g-invar}

Our next aim is to deduce a nonvanishing theorem for the operator $\cR_\sQ$ and $G$-invariant functions, where again 
$G \subset O(N)$ is a closed subgroup and $Q \in L^\infty_G(\R^N)$ is a given weight function. We restate Theorem~\ref{nonvanishing-intro} for the reader's convenience. 

\begin{theorem}
\label{nonvanishing-section}
Let $N \ge 3$, let $G \subset O(N)$ be a closed subgroup, let $Q \in L^\infty_G(\R^N)$, and let $q \in \bigl[1,\frac{2(N+1)}{N-1}\bigr]$ be
such that $(G,q,Q)$ is an admissible extension triple. Moreover, let $p \in \bigl(  \frac{2N}{N-1} \frac{2 q}{q+ 2}, \frac{2N}{N-2}\bigr]$. 
Then for every bounded sequence $(v_{n})_{n} \subset L^{p'}_G(\RN)$ satisfying $\Bigl| \limsup\limits_{n \to\infty} \int\limits_{\RN}v_{n} \cR_{\sQ}(v_{n})~dx\Bigr|>0$, there exist -- after passing to a subsequence -- numbers $R,\zeta>0$ and a sequence of points $(x_{n})_{n \in \N} \subset \RN$ with
\begin{equation}\label{eqn:nonvanishingsymmetric}
\int\limits_{B_{{R}}(x_{n})} |Q v_{n}(x)|^{p'}~dx \geq\zeta, \quad \text{ for all } n.
\end{equation}
\end{theorem}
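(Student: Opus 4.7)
I would argue by contradiction: assume the conclusion fails. A diagonal extraction over $R=1,2,3,\dots$ then yields a subsequence along which
\[
\lim_{n\to\infty}\,\sup_{y\in\RN}\int_{B_R(y)}|Qv_n|^{p'}\,dy=0 \qquad\text{for every }R>0. \qquad (\star)
\]
It thus suffices to prove $\int v_n\,\cR_\sQ(v_n)\,dx\to 0$ under $(\star)$, whereupon the strategy is to split $\cR_\sQ=\cR_\sQ^1+\cR_\sQ^2$ along the decomposition $\Phi=\Phi_1+\Phi_2$ from Section~\ref{sec:resolv-estim-g} and treat the two contributions by the methods developed there, together with the vanishing hypothesis. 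Set $u_n:=Qv_n$, which is bounded in $L^{p'}(\RN)$.

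For the $\cR_\sQ^1$-contribution, use the dyadic pieces $\Phi_1^j=\psi_j\Phi_1$ from the proof of Proposition~\ref{prop:resolventestu3} and split $\cR_\sQ^1=\cR_\sQ^{1,[J]}+\cR_\sQ^{1,>J}$ with $\Phi_1^{[J]}:=\sum_{j\le J}\Phi_1^j$. The truncated kernel $\Phi_1^{[J]}$ is bounded and supported in $\{|x|\le 2^{J+1}\}$, so for each fixed $x$,
\[
|(\Phi_1^{[J]}\ast u_n)(x)|\le\|\Phi_1^{[J]}\|_p\Bigl(\int_{B_{2^{J+1}}(x)}|u_n|^{p'}\,dy\Bigr)^{1/p'},
\]
which forces $\|\Phi_1^{[J]}\ast u_n\|_\infty\to 0$ by $(\star)$; interpolating with the uniform Young bound $\|\Phi_1^{[J]}\ast u_n\|_{p'}\le C_J$ via $\|f\|_p\le\|f\|_{p'}^{p'/p}\|f\|_\infty^{1-p'/p}$ (legitimate since $p>p'$) gives $\|\Phi_1^{[J]}\ast u_n\|_p\to 0$, hence $|\int v_n\,\cR_\sQ^{1,[J]}v_n\,dx|\to 0$. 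For the tail, the estimate (\ref{eq:Prop-3-5-proof-intermediate}) with $r=p$ reads $\|Q((\Phi_1^j\ast\varphi)\ast(Qf))\|_p\le C\,2^{jA_{p,p,q}/2}\|f\|_{p'}$ with $A_{p,p,q}=\tfrac{4qN}{(q+2)p}-(N-1)<0$ by the hypothesis $p>\tfrac{2N}{N-1}\tfrac{2q}{q+2}$; summing the geometric series yields $|\int v_n\,\cR_\sQ^{1,>J}v_n\,dx|\le C\,2^{JA_{p,p,q}/2}\|v_n\|_{p'}^2$, uniformly small in $n$ for large $J$. First pick $J$, then let $n\to\infty$.

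For the $\cR_\sQ^2$-contribution, use the bound (\ref{eq:Phi-2-estimate}) to split $\Phi_2=\Phi_2^{<\delta}+\Phi_2^{[\delta,R]}+\Phi_2^{>R}$ by size of $|x|$. Since $p\in(2,\tfrac{2N}{N-2}]$ we have $\tfrac{p}{2}\in(1,\tfrac{N}{N-2}]$, so $\Phi_2^{<\delta}, \Phi_2^{>R}\in L^{p/2}$ with norms tending to $0$ as $\delta\to 0$ and $R\to\infty$ respectively; Young's inequality with exponents $(1+1/p=2/p+1/p')$ bounds the corresponding bilinear terms uniformly in $n$ and makes them small once $\delta$ is small and $R$ is large. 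The middle piece $\Phi_2^{[\delta,R]}$ is bounded and compactly supported, so the argument used above for $\Phi_1^{[J]}$ carries over verbatim and shows that its contribution tends to zero as $n\to\infty$. Combining the three pieces, $|\int v_n\,\cR_\sQ^2 v_n\,dx|\to 0$. At the endpoint $p=\tfrac{2N}{N-2}$, the near-origin piece is handled instead by the Hardy--Littlewood--Sobolev inequality applied to the Riesz-potential singularity $|x|^{2-N}$, combined with a truncation.

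The main obstacle, as emphasized in the introduction, is that multiplication by $Q\in L^\infty$ does not preserve $\cS$, so several Fourier-side identities used in \cite[Theorem~3.1]{Evequoz2015} are not directly available for $Qv_n$. This is overcome by approximating the $v_n$ by $G$-invariant Schwartz functions in $L^{p'}_G(\RN)$ and passing to the limit in each of the estimates above, which are purely Lebesgue-norm estimates to which Lemma~\ref{dual-restriction-est} and Theorem~\ref{resolvent-estimates-non-self-dual} extend by density. A further delicate point is that the quadratic form $\int v_n\,\cR_\sQ(v_n)\,dx$ is complex-valued; however, since both $\cR_\sQ^1$ and $\cR_\sQ^2$ are treated in the $L^{p'}$-$L^p$ duality and the above bounds control both real and imaginary parts simultaneously, this creates no additional difficulty.
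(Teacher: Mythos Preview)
Your overall plan matches the paper's proof: contradict, obtain the vanishing condition $(\star)$, split $\Phi=\Phi_1+\Phi_2$, handle the $\Phi_1$-part by a near/far dyadic decomposition with a geometric tail, handle $\Phi_2$ by the pointwise decay bounds, and pass from $L^{p'}_G$ to $\cS_G$ by density. Your $L^\infty$-plus-interpolation treatment of the compactly supported near-piece is in fact a pleasant simplification of the paper's Lions-type cube argument in Lemma~\ref{lem:phi1}.

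There is, however, a genuine mismatch in your $\Phi_1$-tail. You split $\cR_\sQ^1 f = Q[\Phi_1^{[J]}\!\ast (Qf)] + Q[\Phi_1^{>J}\!\ast (Qf)]$ and then invoke \eqref{eq:Prop-3-5-proof-intermediate}, but that estimate is for $Q\bigl[(\Phi_1^j\ast\varphi)\ast(Qf)\bigr]$, not for $Q\bigl[\Phi_1^j\ast(Qf)\bigr]$. The extra $\varphi$ is not cosmetic: it is what localizes the Fourier side to the annulus $||\xi|-1|\le\tfrac12$, which is exactly where Lemma~\ref{dual-restriction-est} is applicable in the $L^2$-step. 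Summing \eqref{eq:Prop-3-5-proof-intermediate} over $j>J$ gives a bound for $Q\bigl[((1-\eta_J)\Phi_1)\ast\varphi\ast(Qf)\bigr]$ (this is Proposition~\ref{prop:lambdaestimate}), not for your $\cR_\sQ^{1,>J}$. The fix the paper uses is to insert $\varphi$ first via $\Phi_1=(2\pi)^{-N/2}\Phi_1\ast\varphi$ and only then split; the near-part then becomes $Q[(\eta_J\Phi_1)\ast w_n]$ with $w_n=\varphi\ast(Qv_n)$. Your interpolation trick still works for this piece once you observe that $w_n$ inherits the vanishing property from $u_n=Qv_n$ (split the convolution against the Schwartz tail of $\varphi$), which is the only extra work required. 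A minor additional point: your $\Phi_2$ argument tacitly assumes $p>2$ (for $\Phi_2^{>R}\in L^{p/2}$ and for the interpolation $\|f\|_p\le\|f\|_{p'}^{p'/p}\|f\|_\infty^{1-p'/p}$); the paper's Lemma~\ref{lem:phi2} makes the same assumption, and your endpoint remark for $p=\tfrac{2N}{N-2}$ is too vague --- the paper simply defers this case to \cite[Theorem~2.5]{Evequoz2019}.
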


The remainder of this section is devoted to the proof of this theorem. For this we fix $N \ge 3$ and $q \in \bigl[1,\frac{2(N+1)}{N-1}\bigr]$ such that $(G,q,Q)$ is an admissible extension triple. Moreover, we 
keep using the notation of the previous section, so we write $\Phi = \Phi_1 + \Phi_2$ and $\cR_\sQ = \cR_\sQ^1 +\cR_\sQ^2$. 
We need to analyse the operators $\cR_\sQ^1$ and $\cR_\sQ^2$ separately.  We start by proving the following variant of Proposition~\ref{prop:resolventestu3} for the operator $\cR_\sQ^1$.

\begin{proposition}\label{prop:lambdaestimate}
Let $\varphi \in \SR(\RN)$ be such that $\wdh{\varphi} \in \ceinftyc(\RN)$ is radial, $0\leq \wdh{\varphi} \leq 1$ with $\wdh{\varphi} \equiv 1$ for $\left||\xi| - 1\right| \leq \frac14$ and $\wdh{\varphi} \equiv 0$ for $\left||\xi| - 1\right| \geq \frac12$.\\
Moreover, let $\eta \in \ceinftyc(\RN)$ be radial with $0 \le \eta \le 1$, $\eta(x) = 1$ for $|x| \le 1$, $\eta(x) = 0$ for $|x| \geq 2$, and let $\eta_k(x) = \eta(2^{-k}x)$ for $k \in \N$.\\
Finally, let $p \in \bigl( \frac{2N}{N-1}\frac{2q}{q + 2},\infty)$  and $A_{p,q} := \frac{N}{p}\frac{4q}{q +2} +1-N<0$. 
	Then there exists  $C>0$ such that for $k\geq 1$ we have
	$$ 
\left\|Q \Bigl([(1-\eta_k) \Phi_1]\ast  \varphi \ast (Qf)\Bigr) \right\|_{p} \leq C \frac{2^{\frac{k+1}{2} A_{p,q}}}{1-2^{\frac{A_{p,q}}{2}}}\left\|f\right\|_{p'}
$$   
	for all functions $f \in \cS_{G}$.
\end{proposition}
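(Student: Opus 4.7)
\textit{Proof plan.} The idea is to refine the dyadic argument in the proof of Proposition~\ref{prop:resolventestu3}, summing only over the tail of the decomposition starting at index $j=k+1$. With $\psi_0:=\eta$ and $\psi_j(x):=\eta_j(x)-\eta_{j-1}(x)$ for $j\ge 1$, one has the telescoping identity $\eta_k=\sum_{j=0}^{k}\psi_j$, so that
\begin{equation*}
(1-\eta_k)\Phi_1 \;=\; \sum_{j\ge k+1}\Phi_1^j, \qquad \Phi_1^j:=\psi_j\,\Phi_1,
\end{equation*}
with each $\Phi_1^j$ supported in the annulus $\{2^{j-1}\le|x|\le 2^{j+1}\}$. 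From \eqref{eq:Phi-1-estimate} I then have $\|\Phi_1^j\|_\infty\le C\,2^{-j(N-1)/2}$, and combining this with the volume of the support yields $\|\Phi_1^j\|_2\le C\,2^{j/2}$ uniformly in $j$.

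For each piece $\Phi_1^j*\varphi$ the two endpoint bounds from the proof of Proposition~\ref{prop:resolventestu3} can be reused verbatim. By Plancherel, the radial symmetry of $\wdh{\Phi_1^j}$, and Lemma~\ref{dual-restriction-est} (applied uniformly on spheres of radius $r\in[1/4,7/4]$ via rescaling of the extension estimate), one gets
\begin{equation*}
\bigl\|Q\bigl(\Phi_1^j*\varphi*(Qu)\bigr)\bigr\|_{2}\;\le\;C\,2^{j/2}\,\|u\|_{q'},
\end{equation*}
while $\|\Phi_1^j*\varphi\|_\infty\le\|\Phi_1^j\|_\infty\|\varphi\|_1\le C\,2^{-j(N-1)/2}$ together with Young's inequality gives
\begin{equation*}
\bigl\|Q\bigl(\Phi_1^j*\varphi*(Qu)\bigr)\bigr\|_{\infty}\;\le\;C\,2^{-j(N-1)/2}\,\|u\|_{1},
\end{equation*}
both valid for $u\in\cS_G$.

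Applying Lemma~\ref{lem:intpol} to the radial kernel $\Phi_1^j*\varphi$ with the diagonal choice $r=p$, the interpolation hypothesis \eqref{eq:complex-interpolation-non-selfdual-assumption} reduces to $d_{q,1}\le 1\le d_{q,2}$ (automatic) together with $p\ge\frac{4q}{q+2}$, which is implied by our standing assumption $p>\frac{2N}{N-1}\frac{2q}{q+2}$; the latter also forces $A_{p,q}<0$, since $A_{p,q}$ is precisely the exponent $A_{r,p,q}$ in \eqref{eq:def-A-q-r-p} specialised to $r=p$. This yields
\begin{equation*}
\bigl\|Q\bigl(\Phi_1^j*\varphi*(Qu)\bigr)\bigr\|_{p}\;\le\;C\,2^{jA_{p,q}/2}\,\|u\|_{p'}.
\end{equation*}
Using the triangle inequality and summing the geometric series over $j\ge k+1$ with common ratio $2^{A_{p,q}/2}<1$ produces
\begin{equation*}
\sum_{j\ge k+1}2^{jA_{p,q}/2}\;=\;\frac{2^{(k+1)A_{p,q}/2}}{1-2^{A_{p,q}/2}},
\end{equation*}
which is exactly the constant appearing in the statement. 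The only non-routine point—essentially administrative—is verifying that the rescaled version of the Fourier extension estimate on spheres of radius $r\in[1/4,7/4]$ holds with a $j$-independent constant; this follows from Lemma~\ref{dual-restriction-est} and the compactness of that radius interval. I do not anticipate any serious obstacle beyond this bookkeeping, since the gain over Proposition~\ref{prop:resolventestu3} comes for free from truncating the dyadic sum.
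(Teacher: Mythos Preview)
Your proposal is correct and follows essentially the same approach as the paper: the paper's proof likewise invokes the dyadic estimate \eqref{eq:Prop-3-5-proof-intermediate} from the proof of Proposition~\ref{prop:resolventestu3} (specialised to $r=p$), observes that $(1-\eta_k)\Phi_1=\sum_{j\ge k+1}\Phi_1^j$, and sums the resulting geometric tail. Your write-up is slightly more explicit in re-deriving the endpoint bounds and in checking the interpolation hypothesis \eqref{eq:complex-interpolation-non-selfdual-assumption} for $r=p$, but the argument is otherwise identical.
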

\begin{proof}
Let $k \in \N$. Using the given function $\eta$, we let $\Phi_1^j$, $j \in \N \cup \{0\}$ be defined as in the proof of Proposition~\ref{prop:resolventestu3}. The proof of this proposition yields, in particular, inequality (\ref{eq:Prop-3-5-proof-intermediate}) with $r=p$, which writes as  
$$
\left\|Q \bigl[\bigl(\Phi_1^{j} \ast \varphi\bigr) \ast (Qf)\bigr] \right\|_p \le C 2^{\frac{j}{2}A_{p,q}} \left\|f\right\|_{p'} \qquad \text{for $j \in \N$ and all functions $f \in \cS_{G}$}
$$
with $A_{p,q} := A{p,p,q}= \frac{N}{p}\frac{4q}{q +2} +1-N$, cf. (\ref{eq:def-A-q-r-p}). Moreover, by construction, we have the dyadic decomposition 
$$
(1-\eta_k) \Phi_1 = \sum\limits^{\infty}_{j = k+1}\Phi_1^j
$$ 
and therefore  
\begin{align*}
\left\|Q \Bigl([(1-\eta_k) \Phi_1]\ast  \varphi \ast (Qf)\Bigr) \right\|_{p} \le \sum\limits^{\infty}_{j = k+1} \left\|Q \bigl[\bigl(\Phi_1^{j} \ast \varphi\bigr) \ast (Qf)\bigr] \right\|_p &\le C \left\|f\right\|_{p'} \sum\limits^{\infty}_{j = k+1} 2^{\frac{j}{2}A_{p,q}}\\
&= C \frac{2^{\frac{k+1}{2} A_{p,q}}}{1-2^{\frac{A_{p,q}}{2}}}\left\|f\right\|_{p'},   
\end{align*}
for all functions $f \in \cS_G$, as claimed. 
\end{proof}

\begin{lemma}\label{lem:phi1}
Let $p > \frac{2N}{N-1}\frac{2q}{q +2}$ and suppose that $(v_{n})_{n} \subset \SR_G$ is an $L^{p'}$-bounded sequence with 
$$ 
\lim\limits_{n \to \infty}\: \sup\limits_{y \in \RN} \int\limits_{B_{\rho}(y)}|Q v_{n}|^{p'}~dx  = 0, \quad \text{ for all } \rho > 0. 
$$
Then
$$ \int\limits_{\RN} Q v_{n} [\Phi_{1} \ast (Q v_{n})] ~dx \to 0, \qquad \text{as } n \to \infty. $$
\end{lemma}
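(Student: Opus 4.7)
The plan is to split $\Phi_1$ into a spatial near-field and far-field via the dyadic cutoffs $\eta_k$ from Proposition~\ref{prop:lambdaestimate}, control the far-field by that proposition, and exploit the vanishing hypothesis for the near-field. Note that the assumption $p > \frac{2N}{N-1}\frac{2q}{q+2}$ is exactly equivalent to $A_{p,q}<0$, which will drive the far-field decay in $k$. Using the Fourier identity $\Phi_1 = (2\pi)^{-N/2}\Phi_1 * \varphi$ established in the proof of Proposition~\ref{prop:resolventestu3}, I would rewrite
\begin{equation*}
(2\pi)^{N/2}\!\int\limits_{\RN}\! Qv_n\,[\Phi_1 * (Qv_n)]\,dx = \!\int\limits_{\RN}\! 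Qv_n\,[\eta_k \Phi_1 * \varphi * (Qv_n)]\,dx + \!\int\limits_{\RN}\! Qv_n\,[(1-\eta_k)\Phi_1 * \varphi * (Qv_n)]\,dx
\end{equation*}
and treat the two pieces separately.

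For the far-field piece, duality and Hölder's inequality combined with Proposition~\ref{prop:lambdaestimate} yield
\begin{equation*}
\Bigl|\int_{\RN}\! Qv_n\,[(1-\eta_k)\Phi_1 * \varphi * (Qv_n)]\,dx\Bigr| \leq \|v_n\|_{p'}\bigl\|Q[(1-\eta_k)\Phi_1 * \varphi * (Qv_n)]\bigr\|_{p} \leq C\,\frac{2^{(k+1)A_{p,q}/2}}{1-2^{A_{p,q}/2}}\|v_n\|_{p'}^{2}.
\end{equation*}
Since $A_{p,q}<0$ and $(v_n)$ is $L^{p'}$-bounded, this term can be made smaller than any prescribed $\delta>0$ by choosing $k$ sufficiently large, uniformly in $n$.

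The main work lies in the near-field piece, where Proposition~\ref{prop:lambdaestimate} is unavailable. Set $K_k:=\eta_k\Phi_1 * \varphi$. By \eqref{eq:Phi-1-estimate}, $\eta_k\Phi_1$ is a bounded function with compact support in $B_{2^{k+1}}$, so $K_k \in L^{s}(\RN)$ for every $s\in[1,\infty]$ and $K_k$ decays rapidly at infinity. The plan is to truncate $K_k = K_k\mathds{1}_{B_R}+K_k\mathds{1}_{\RN\setminus B_R}$ and show: (i) the tail contributes arbitrarily little as $R\to\infty$, uniformly in $n$, via Young's convolution inequality and the rapid decay of $K_k$; and (ii) for fixed $k$ and $R$ the compactly supported piece produces a contribution that vanishes as $n\to\infty$. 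For (ii), the pointwise bound
\begin{equation*}
|K_k\mathds{1}_{B_R} * (Qv_n)(x)| \leq \|K_k\|_{L^{p}(B_R)}\,\|Qv_n\|_{L^{p'}(B_R(x))},
\end{equation*}
combined with the interpolation estimate $\|h\|_{p/p'}^{p/p'} \leq \|h\|_\infty^{(p-p')/p'}\|h\|_1$ applied to $h(x):=|Qv_n|^{p'} * \mathds{1}_{B_R}(x)$ (whose $L^{\infty}$-norm is controlled by the vanishing hypothesis and whose $L^{1}$-norm is controlled by $\|v_n\|_{p'}$), produces a bound of the form
\begin{equation*}
\Bigl|\int_{\RN} Qv_n\,[K_k\mathds{1}_{B_R} * (Qv_n)]\,dx\Bigr| \leq C(k,R,\|Q\|_\infty)\,\|v_n\|_{p'}^{1+p'/p}\Bigl(\sup_{y\in\RN}\int_{B_R(y)}|Qv_n|^{p'}\,dx\Bigr)^{(p-p')/(pp')}\!,
\end{equation*}
which tends to zero as $n\to\infty$ by the vanishing hypothesis (using also $|Qv_n|^{p'}\leq \|Q\|_\infty^{p'}|v_n|^{p'}$). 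Combining these estimates by first choosing $k$ large, then $R$ large, then $n$ large, yields the claim. The delicate step is the near-field estimate: since no smoothness of $(v_n)$ beyond $L^{p'}$-boundedness is assumed, a Rellich/Lions-type compactness argument is unavailable, and smallness must be extracted directly from the local $L^{p'}$-vanishing of $Qv_n$ using the integrability and decay of the kernel $K_k$.
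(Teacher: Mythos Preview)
Your argument is correct and follows the paper's strategy: the same far/near split via $\eta_k$, the same use of Proposition~\ref{prop:lambdaestimate} for the far-field, and the same interpolation idea (pull out a positive power of $\sup_y\int_{B_\rho(y)}|Qv_n|^{p'}$) for the near-field. The one technical difference is how the near-field convolution is grouped: the paper writes $(\eta_k\Phi_1)\ast w_n$ with $w_n:=\varphi\ast(Qv_n)$, so the kernel $\eta_k\Phi_1$ is genuinely compactly supported and a direct cube-decomposition argument applies without any further truncation; you instead work with $K_k=(\eta_k\Phi_1)\ast\varphi$ acting on $Qv_n$, which forces the extra split $K_k=K_k\mathds{1}_{B_R}+K_k\mathds{1}_{\RN\setminus B_R}$ and the Young-inequality tail estimate. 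Both routes give the same final bound, but the paper's grouping is slightly cleaner since it eliminates the Schwartz tail from the outset. Note also that, as in the paper, your interpolation step (and the Young step with exponent $p/2$) tacitly uses $p\ge 2$, which is consistent with how the lemma is applied later.
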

\begin{proof}
Let, as in the assumptions of Proposition~\ref{prop:lambdaestimate}, $\varphi \in \SR(\RN)$ be such that $\wdh{\varphi} \in \ceinftyc(\RN)$ is radial, $0\leq \wdh{\varphi} \leq 1$ with $\wdh{\varphi} \equiv 1$ for $\left||\xi| - 1\right| \leq \frac14$ and $\wdh{\varphi} \equiv 0$ for $\left||\xi| - 1\right| \geq \frac12$. Moreover, let $w_{n} = \varphi \ast (Q v_{n})$. Then we have 
\begin{equation}
  \label{eq:young-inequality-varphi-w-n}
\|w_n\|_{p'} \le \|\varphi\|_1 \|Q v_n\|_{p'} \le \|\varphi\|_1 \|Q\|_\infty \|v_n\|_{p'}
\end{equation}
for all $n \in \N$ by Young's inequality, so $(w_n)_n$ is also a bounded sequence in $L^{p'}(\R^N)$ by assumption. Since $\wdh{\Phi_1}= \wdh{\Phi_1}\, \wdh{\varphi}$, we have $(2\pi)^{\frac{N}{2}} \Phi_1 =  \Phi_1 \ast \varphi$. Therefore, with $\eta_k$ defined as in Proposition~\ref{prop:lambdaestimate},  we can write
\begin{align*}
(2\pi)^{\frac{N}{2}} \int\limits_{\RN} Q v_{n} [\Phi_{1} \ast (Qv_{n})]~dx &=  \int\limits_{\RN} Q v_{n} [\Phi_{1} \ast \varphi \ast (Qv_{n})]~dx\\
&= \int\limits_{\RN} Q v_{n} [ \eta_k \Phi_{1} \ast \varphi \ast (Qv_{n})]~dx+
 \int\limits_{\RN} Q v_{n} [ (1-\eta_k) \Phi_{1} \ast \varphi \ast (Qv_{n})]~dx,
\end{align*}
for every $n,k \in \N$, where 
$$
\Bigl| \int\limits_{\RN} Q v_{n} [ (1-\eta_k) \Phi_{1} \ast \varphi \ast (Qv_{n})]~dx\Bigr| \le \|v_n\|_{p'} \left\|Q \Bigl([(1-\eta_k) \Phi_1]\ast  \varphi \ast (Qv_n)\Bigr) \right\|_{p} \leq C \frac{2^{\frac{k+1}{2} A_{p,q}}}{1-2^{\frac{A_{p,q}}{2}}}\left\|v_n\right\|_{p'}^2
$$
by Hölder's inequality and Proposition~\ref{prop:lambdaestimate}. Since $A_{p,q}<0$, it follows that 
\begin{equation}
  \label{prop-lambda-est-consequence}
 \sup\limits_{ n\in \N}  \Bigl| \int\limits_{\RN} Q v_{n} [ (1-\eta_k) \Phi_{1} \ast \varphi \ast (Qv_{n})]~dx \Bigr| \to 0, \qquad \text{as } k \to \infty.
\end{equation}
For fixed $k \in \N$, we now choose $R= 2^{k+1}$, which implies that $\eta_k \equiv 0$ on $\R^N \setminus B_R$. 
Decomposing $\RN$ into disjoint $N$-cubes $\{Z_{l}\}_{l \in \N}$ of side 
length $R$, and considering for each $l$ the $N-$ cube $Z'_{l}$ with the same center as $Z_{l}$ but with side length $3R$, we find
\begin{align*}
&\Bigl| \int\limits_{\RN} Q v_{n} [ \bigl(\eta_k \Phi_{1}\bigr) \ast \varphi \ast (Qv_{n})]~dx \Bigr|
= \Bigl| \int\limits_{\RN} Q v_{n} [ \bigl(\eta_k \Phi_{1}\bigr) \ast w_n]~dx \Bigr|\\
&\leq  \sum\limits^{\infty}_{l=1} \int\limits_{Z_{l}} \Bigl( \int\limits_{|x-y|<R}|\Phi_{1}(x-y)|~|Q v_{n}(x)|~|w_{n}(y)|~dx \Bigr)~dy\\
&\le 
\|\Phi_{1}\|_{L^{\infty}} \sum\limits^{\infty}_{l=1} \int\limits_{Z'_{l}} |Q v_{n}| ~dx\int\limits_{Z'_{l}}|w_{n}|~dx \\
&\leq 
\|\Phi_{1}\|_{L^{\infty}}  \Bigl[\sum\limits^{\infty}_{l=1}\Bigl(\int\limits_{Z'_{l}}|w_{n}|~dx\Bigr)^{p'}\Bigr]^{\frac{1}{p'}}\Bigl[\sum\limits^{\infty}_{l=1}\Bigl( \int\limits_{Z'_{l}}|Q v_{n}|~dx \Bigr)^{p}\Bigr]^{\frac{1}{p}} \\
&\leq
\|\Phi_{1}\|_{L^{\infty}}  (3R)^{\frac{2N}{p}} \Bigl[\sum\limits^{\infty}_{l=1}\int\limits_{Z'_{l}}|w_{n}|^{p'}~dx\Bigr]^{\frac{1}{p'}}\Bigl[\sum\limits^{\infty}_{l=1}\Bigl(\int\limits_{Z'_{l}}|Q v_{n}|^{p'}~dx\Bigr)^{\frac{p}{p'}}\Bigr]^{\frac{1}{p}}\\
&\leq
\|\Phi_{1}\|_{L^{\infty}}   (3R)^{\frac{2N}{p}} 3^{\frac{N}{p'}} \|w_{n}\|_{L^{p'}} \Bigl[\sup\limits_{l \in \N}\int\limits_{Z'_{l}}|Q v_{n}|^{p'}~dx\Bigr]^{\frac{p}{p'}-1} \Bigl[ \sum\limits^{\infty}_{l=1}\int\limits_{Z'_{l}}|Q v_{n}|^{p'}~dx \Bigr]^{\frac{1}{p}}\\
&\leq \|\Phi_{1}\|_{L^{\infty}}   (3R)^{\frac{2N}{p}} 3^{N}\|w_{n}\|_{L^{p'}}
 \Bigl[\sup\limits_{y \in \RN}\int\limits_{B_{3R\sqrt{N}}(y)}|Qv_{n}|^{p'}~dx\Bigr]^{\frac{p}{p'}-1} \|Q v_{n}\|^{\frac{p'}{p}}_{p'} \\
&\leq \|\Phi_{1}\|_{L^{\infty}} \|Q\|_\infty^{1+\frac{p'}{p}} \|\varphi\|_1  (3R)^{\frac{2N}{p}} 3^{N}
\|v_{n} \|^{1+\frac{p'}{p}}_{p'}\Bigl[\:\sup\limits_{y \in \RN}\int\limits_{B_{3R\sqrt{N}}(y)}|Q v_{n}|^{p'}~dx\Bigr]^{\frac{p}{p'}-1}, 
\end{align*}
where we used (\ref{eq:young-inequality-varphi-w-n}) in the last step. By assumption, it now follows that 
\begin{equation}
  \label{R-tiling-estimate}
\int\limits_{\RN} Q v_{n} [ \bigl(\eta_k \Phi_{1}\bigr) \ast \varphi \ast (Qv_{n})]~dx \to 0 \qquad \text{as $n \to \infty$ for every $k \in \N$.}
\end{equation}
The claim now follows by combining (\ref{prop-lambda-est-consequence}) and (\ref{R-tiling-estimate}). 
\end{proof}	
\noindent

Regarding $\Phi_{2}$ we make use of the following variant of \cite[Theorem 2.5]{Evequoz2019}. 
\begin{lemma}\label{lem:phi2} 
Let $2 < p \leq \frac{2N}{N-2}$ and suppose that $(v_n)_n$ is a bounded sequence in $L^{p'}(\RN)$ such that
$$ \lim\limits_{n \to \infty} \sup\limits_{y \in \RN}\int\limits_{B_{\rho}(y)}|v_{n}|^{p'}~dx = 0, \qquad \text{ for all } \rho > 0. $$
Then 
$$ \int\limits_{\RN}v_{n} [\Phi_{2}\ast v_{n}] ~dx \to 0 \quad \text{ as } n \to \infty.$$
\end{lemma}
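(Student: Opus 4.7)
The approach parallels that of Lemma~\ref{lem:phi1}: decompose the kernel and exploit the nonvanishing hypothesis via a cube-tiling argument. The key difference is that $\Phi_{2}$ is unbounded near the origin (behaving like $|x|^{2-N}$) but decays faster at infinity than $\Phi_{1}$ (like $|x|^{-N}$), which trades a pointwise $L^{\infty}$ bound for additional integrability.

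Fix parameters $0 < \delta < M$ to be chosen later and decompose $\Phi_{2} = K^{in} + K^{mid} + K^{out}$, with $K^{in} = \Phi_{2} \mathds{1}_{|x| \leq \delta}$, $K^{mid} = \Phi_{2} \mathds{1}_{\delta < |x| \leq M}$ and $K^{out} = \Phi_{2} \mathds{1}_{|x| > M}$. For the outer piece, observe that $\|K^{out}\|_{s} \to 0$ as $M \to \infty$ for every $s > 1$; choosing $s = p'/(2-p') > 1$ (which is valid since $p'<2$), Young's inequality together with H\"older yields
\[
  \Bigl|\int_{\RN} v_{n}(K^{out}\ast v_{n})\,dx\Bigr| \leq \|K^{out}\|_{s}\,\|v_{n}\|_{p'}^{2} = o_{M}(1)\,\|v_{n}\|_{p'}^{2}
\]
uniformly in $n$. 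For the inner piece, I combine $\|K^{in}\|_{1} \leq C\delta^{2}$ (which via Young gives smallness of $\|K^{in}\ast v_{n}\|_{p'}$) with the Hardy--Littlewood--Sobolev estimate $\|K^{in}\ast v_{n}\|_{(p')^{*}} \leq C\|v_{n}\|_{p'}$, where $(p')^{*} = Np'/(N-2p')$ is the Sobolev conjugate of $p'$; interpolation between $L^{p'}$ and $L^{(p')^{*}}$ then produces $\|K^{in}\ast v_{n}\|_{p} = o_{\delta}(1)\,\|v_{n}\|_{p'}$ whenever $p < (p')^{*}$.

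For the middle piece $K^{mid}$, which is pointwise bounded by $C\delta^{2-N}$ and supported in $B_{M}$, the tiling argument from the proof of Lemma~\ref{lem:phi1} applies with only minor adaptation: decompose $\RN$ into disjoint cubes $\{Z_{l}\}$ of side length $M$ with enlarged cubes $Z_{l}'$ of side $3M$, so that $(K^{mid}\ast v_{n})(y)$ for $y \in Z_{l}$ only involves the values of $v_{n}$ on $Z_{l}'$. A double application of H\"older's inequality (on each pair of cubes and on the discrete sum over $l$) combined with the $L^{\infty}$-bound on $K^{mid}$ and the hypothesis $\sup_{y}\int_{B_{\rho}(y)}|v_{n}|^{p'}\,dx \to 0$ then yields $\int_{\RN} v_{n}(K^{mid}\ast v_{n})\,dx \to 0$ as $n\to\infty$ for each fixed pair $(\delta, M)$. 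Given $\eta > 0$, the proof is completed by first choosing $\delta$ small and $M$ large so that the inner and outer contributions are bounded by $\eta/2$ uniformly in $n$, and then taking $n$ large to make the middle contribution bounded by $\eta/2$.

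The main technical obstacle is the critical endpoint $p = \frac{2N}{N-2}$, at which $p = (p')^{*}$ and the interpolation used for $K^{in}$ degenerates to the plain HLS bound without any $\delta$-smallness. This case requires a separate treatment, for instance by exploiting the boundedness of the bilinear form $(v,w) \mapsto \int_{\RN} v (\Phi_{2}\ast w)\,dx$ on $L^{p'}\times L^{p'}$ provided by HLS and passing to the critical exponent via an approximation argument from strictly subcritical $p$. A related subtlety arises in low dimensions (notably $N=3$, $p\in(2,3]$) where $p' \geq N/2$ so HLS is no longer directly available for $K^{in}$; in this regime one instead absorbs the singular near-zero region into the tiling argument, using a local H\"older estimate on each cube in place of the global $L^{\infty}$-bound.
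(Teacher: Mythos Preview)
Your approach is genuinely different from the paper's. The paper does not argue directly: it invokes \cite[Theorem~2.5]{Evequoz2019} for the case $v_n\in\cS$, and then passes to general $v_n\in L^{p'}(\RN)$ by density together with the boundedness of $f\mapsto\Phi_2\ast f$ from Lemma~\ref{R-2-estimate}. Your three-piece decomposition $K^{in}+K^{mid}+K^{out}$ with a cube-tiling argument for the bounded, compactly supported piece is a self-contained alternative that avoids the external reference. The outline is sound for the subcritical range, but there are a few points to tighten.

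First, a minor slip: in the $K^{out}$ step the Young exponent should be $s=p/2$ (equivalently $p'/(2p'-2)$), not $p'/(2-p')$; either way $s>1$ since $p>2$, so the conclusion survives.

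Second, your treatment of $K^{in}$ via HLS plus interpolation is more delicate than necessary and creates the artificial low-dimension obstacle you flag. A direct Young estimate works uniformly: for every $2<p<\tfrac{2N}{N-2}$ one has $p/2<\tfrac{N}{N-2}$, hence $\|K^{in}\|_{p/2}\le C\,\delta^{\,2-N+2N/p}$ with strictly positive exponent, and therefore $\|K^{in}\ast v_n\|_p\le C\,\delta^{\,2-N+2N/p}\|v_n\|_{p'}$. This covers $N=3$, $p\in(2,3]$ without any separate argument.

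The genuine gap is the critical endpoint $p=\tfrac{2N}{N-2}$. Your first suggestion---``approximation from strictly subcritical $p$''---is not valid: the sequence $(v_n)$ lives in a fixed $L^{p'}$, and there is no mechanism to pass to a subcritical exponent while retaining the vanishing hypothesis. Your second suggestion (absorbing $K^{in}$ into the tiling argument) \emph{is} the right idea, and you should make it the primary argument: since $K^{in}$ is supported in $B_\delta$ with $\delta<M$, on each cube $Z_l$ the convolution $K^{in}\ast v_n$ depends only on $v_n|_{Z_l'}$, and HLS gives $\bigl|\int_{Z_l}v_n\,(K^{in}\ast v_n)\,dx\bigr|\le C\,\|v_n\|_{L^{p'}(Z_l)}\|v_n\|_{L^{p'}(Z_l')}$. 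Summing over $l$ via H\"older (discrete $\ell^{p'}$--$\ell^{p}$) and extracting one factor $\bigl(\sup_l\|v_n\|_{L^{p'}(Z_l')}\bigr)^{(p-p')/p}$ then gives convergence to zero from the vanishing hypothesis. With this in place your argument becomes complete across the full range $2<p\le\tfrac{2N}{N-2}$.
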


\begin{proof}
The claim follows from \cite[Theorem 2.5]{Evequoz2019} in the case where $v_{n} \in \SR$ for every $n \in \N$.
If $(v_n)_n$ is an arbitrary bounded sequence in $L^{p'}(\R^N)$, we first recall that, by Lemma~\ref{R-2-estimate}, there exists a constant $C>0$ with 
$$
\|\Phi_2 \ast v \|_p \le C \|v\|_{p'} \qquad \text{for every $v \in L^{p'}(\R^N)$.}
$$
Moreover we choose, by density, $\tilde v_n \in \cS$ with 
$\|v_n-\tilde v_n\|_{p'} \le \frac{1}{n}$ for every $n \in
\N$. The assumption then implies that also 
$$ 
\lim\limits_{n \to \infty} \sup\limits_{y \in \RN}\int\limits_{B_{\rho}(y)}|\tilde v_{n}|^{p'}~dx  = 0, \qquad \text{ for all } \rho > 0 
$$
and therefore 
$$ 
\int\limits_{\RN} \tilde v_{n} [\Phi_{2}\ast \tilde v_{n}] ~dx \to 0, \quad \text{ as } n \to \infty
$$
by \cite[Theorem 2.5]{Evequoz2019}. Moreover,
\begin{align*}
&\Bigl|\int_{\R^N}[v_n (\Phi_{2}\ast v_{n})  -\tilde v_n (\Phi_{2}\ast \tilde v_{n}) ]\,
  dx\Bigr | = \Bigl|\int_{\R^N} (v_n-\tilde v_n) \Phi_2 \ast (v_n+\tilde
v_n)\,dx \Bigr|\\
&\le C \|v_n-\tilde v_n\|_{p'} \|v_n+\tilde
v_n\|_{p'}
\le \frac{C(1+\frac{1}{n})\|v_n\|_{p'}}{n} 
 \to 0 \qquad \text{as $n \to \infty$}
\end{align*}
and thus also 
$$ 
\int\limits_{\RN}v_{n} [\Phi_{2}\ast v_{n}] ~dx \to 0 \quad \text{ as } n \to \infty,
$$
as claimed. 
\end{proof}

We are now in position to finish the proof of Theorem~\ref{nonvanishing-section}:

\begin{proof}[Proof of Theorem~\ref{nonvanishing-section}]
Let $(v_{n})_{n} \subset L^{p'}_G(\RN)$ be a bounded sequence, and suppose by contradiction that \eqref{eqn:nonvanishingsymmetric} does not hold. Then we have
	$$
\lim\limits_{n\to \infty} \sup\limits_{y \in \RN} \int\limits_{B_{\rho}(y)} |Q v_{n}|^{p'}~dx  = 0, \qquad \text{ for all } \rho > 0.
$$
By density, we may choose $\tilde v_n \in \cS_G$ with 
$\|v_n-\tilde v_n\|_{p'} \le \frac{1}{n}$ for every $n \in
\N$, which implies that $\|Qv_n-Q\tilde v_n\|_{p'} \le \frac{\|Q\|_\infty}{n}$ for all $n$ and therefore also  
$$
\lim\limits_{n\to \infty} \sup\limits_{y \in \RN} \int\limits_{B_{\rho}(y)} |Q \tilde v_{n}|^{p'}~dx  = 0, \qquad \text{ for all } \rho > 0.
$$
Combining Lemma \ref{lem:phi1} (applied to $\tilde v_n$) and Lemma \ref{lem:phi2} (applied to $Q \tilde v_n$), we then deduce that 
	$$ 
\int\limits_{\RN} \tilde v_{n} \cR_\sQ \tilde v_{n} dx = \int\limits_{\RN} Q \tilde v_{n} [\Phi_{1} \ast (Q \tilde v_{n})]~dx + \int\limits_{\RN} Q \tilde v_{n}[\Phi_{2}\ast (Q \tilde v_{n})]~dx \rightarrow 0 \qquad \text{ as } n \to \infty. 
$$ 
Moreover, by Theorem~\ref{resolvent-estimates-intro} we have 
\begin{align*}
&\Bigl|\int_{\R^N}[v_n \cR_\sQ v_n  -\tilde v_n \cR_\sQ \tilde v_{n} ]\,
  dx\Bigr | = \Bigl|\int_{\R^N} (v_n-\tilde v_n) \cR_\sQ (v_n+\tilde
v_n)\,dx \Bigr|\\
&\le \|v_n-\tilde v_n\|_{p'} \|\cR_\sQ(v_n+\tilde
v_n)\|_{p} \le C \|v_n-\tilde v_n\|_{p'} \|v_n+\tilde
v_n\|_{p}\\
&\to 0 \qquad \text{as $n \to \infty$.}
\end{align*}
Consequently, we also have that $\int\limits_{\RN} v_{n} \cR_\sQ v_{n} dx \to 0$ as $n \to \infty$, contrary to the assumption. The claim thus follows.
\end{proof}

\section{Dual variational framework and $G-$invariant solutions}
\label{sec:dual-vari-fram}

Let $G \subset O(N)$ be a fixed closed subgroup, and let $Q \in L^\infty_G(\R^N)$ be a nonnegative fixed weight function with $Q \not \equiv 0$. We now focus our attention to the equation
\begin{equation}\label{eq}
-\Delta u - u = Q(x)|u|^{p-2}u, \qquad u \in L^{p}(\RN).
\end{equation}
To prove the existence of nontrivial real-valued solutions of (\ref{eq}), we will  use the dual variational approach introduced in \cite{Evequoz2015} and consider the operator $K_\sQ$ formally defined as $K_\sQ f = Q^{\frac{1}{p}} R (Q^{\frac{1}{p}}f)$, where 
$R$ denotes the real part of the Helmholtz resolvent operator $\cR$, i.e., $R g = \bigl({\rm Re\,} \Phi\bigr)\ast g$ with the fundamental solution $\Phi$ defined in (\ref{fundsol}).

To analyze the mapping properties of $K_\sQ$ and to set up a variational framework, we assume, as in Theorem~\ref{dual-ground-state-intro}, that $q \in \bigl[1,\frac{2(N+1)}{N-1}\bigr]$ and $p \in \bigl(\max\{\frac{2N}{N-1} \frac{2 q}{q+ 2},2\}, \frac{2N}{N-2}\bigr)$ are fixed such that $(G,q,Q^{\frac{1}{p}})$ is an admissible extension triple. 
The following is an immediate consequence of Theorem~\ref{resolvent-estimates-non-self-dual}. 
\begin{lemma}
\label{non-selfdual-0}
Let $\tilde p,r \in (1,\infty)$ satisfy (\ref{eq:p-r-condition-non-self-dual-general}) with $\tilde p$ in place of $p$, and suppose moreover that (\ref{eq:p-r-condition-non-self-dual-q-greater-2}) holds with $\tilde p$ in place of $p$ if $q \ge 2$, and that (\ref{eq:p-r-condition-non-self-dual-q-smaller-2}) holds with $\tilde p$ in place of $p$ if $q<2$.

Then the operator $K_\sQ$ is bounded as a map $L_G^{\tilde p'}(\R^N) \to L_G^r(\R^N)$.
\end{lemma}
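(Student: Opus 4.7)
The plan is to derive the boundedness of $K_\sQ$ directly from Theorem~\ref{resolvent-estimates-non-self-dual} applied with $Q^{1/p}$ in place of the weight $Q$. The first step will be to observe that $Q^{1/p} \in L^\infty_G(\R^N)$, since $Q \in L^\infty_G(\R^N)$ is nonnegative and $p>0$, and that by the standing assumption recorded just before the lemma, the triple $(G, q, Q^{1/p})$ is admissible. The conditions imposed on $(\tilde p, r)$ in the lemma are precisely those required by Theorem~\ref{resolvent-estimates-non-self-dual} (with $\tilde p$ in place of $p$), so that theorem will yield a constant $C>0$ with
$$
\|Q^{1/p} \cR(Q^{1/p} f)\|_r \,\le\, C \|f\|_{\tilde p'} \qquad \text{for every $f \in \cS_G$.}
$$

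Next I would replace $\cR$ by its real part $R$. Since the kernel of $R$ is $\mathrm{Re}\,\Phi$, for real-valued $f \in \cS_G$ one has $R(Q^{1/p} f) = \mathrm{Re}\,\bigl(\cR(Q^{1/p} f)\bigr)$ pointwise a.e., and hence the pointwise domination $|K_\sQ f| \le |Q^{1/p}\cR(Q^{1/p} f)|$, which combined with the previous display gives $\|K_\sQ f\|_r \le C \|f\|_{\tilde p'}$. For complex-valued $f$, the same conclusion (with a harmless factor of $2$) follows by decomposing $f = f_1 + \mathrm{i} f_2$ into real and imaginary parts and using linearity.

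The final step will be to confirm $G$-invariance and extend by density. Since $Q^{1/p}$ is $G$-invariant and convolution against the radial kernel $\mathrm{Re}\,\Phi$ preserves $G$-invariance, $K_\sQ$ maps $\cS_G$ into the $G$-invariant subspace of $L^r(\R^N)$; the bound then extends to all of $L^{\tilde p'}_G(\R^N)$ by density of $\cS_G$ in $L^{\tilde p'}_G(\R^N)$. No substantive obstacle is expected here: the lemma is essentially a corollary of Theorem~\ref{resolvent-estimates-non-self-dual}, and the argument hinges only on the pointwise domination $|Rg| \le |\cR g|$ and on the admissibility of the triple $(G, q, Q^{1/p})$ granted by hypothesis.
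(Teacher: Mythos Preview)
Your proposal is correct and follows exactly the route the paper intends: the paper states that the lemma is ``an immediate consequence of Theorem~\ref{resolvent-estimates-non-self-dual},'' and you have simply spelled out the details of that deduction (applying the theorem with $Q^{1/p}$ as the weight, passing from $\cR$ to its real part via the pointwise bound $|Rg|\le|\cR g|$ for real $g$, and extending by density).
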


We note that Lemma~\ref{non-selfdual-0} applies in particular in the case $r=\tilde p=p$, so
$$
\text{$K_\sQ$ is a bounded operator $L_G^{p'}(\R^N) \to L_G^p(\R^N)$.}
$$
We also note the following immediate corollary of Lemma~\ref{non-selfdual-0}.

\begin{corr}
\label{non-selfdual}
There exist $\sigma_1 < p<\sigma_2$ with the property that $K_\sQ$ is bounded as a map $L_G^{\sigma_i'}(\R^N) \to L_G^p(\R^N)$ and as a map 
$L_G^{p'}(\R^N) \to L_G^{\sigma_i}(\R^N)$ for $i=1,2$. 
\end{corr}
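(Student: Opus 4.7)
The plan is to deduce the corollary directly from Lemma~\ref{non-selfdual-0} by perturbing the self-dual point $(1/p,1/p)$ slightly in each of the four axis-parallel directions. The key preliminary observation I would establish is that, under the hypotheses of Theorem~\ref{dual-ground-state-intro}, the admissible region of pairs $(1/\tilde p, 1/r) \in (0,1)^2$ described by (\ref{eq:p-r-condition-non-self-dual-general})--(\ref{eq:p-r-condition-non-self-dual-q-smaller-2}) is open (all the inequalities involving $1/\tilde p$ and $1/r$ are strict, except the lower bound $\frac{N-2}{N} \le \frac{1}{\tilde p}+\frac{1}{r}$ in (\ref{eq:p-r-condition-non-self-dual-general}), which at the point of interest holds with strict inequality anyway, as explained below), and that the self-dual point $(1/p,1/p)$ lies in its interior.

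The second step is the verification of strict interiority. From $p \in \bigl(\max\{\frac{2N}{N-1}\frac{2q}{q+2},2\}, \frac{2N}{N-2}\bigr)$, condition (\ref{eq:p-r-condition-non-self-dual-general}) at $\tilde p=r=p$ reduces to $\frac{N-2}{N} < \frac{2}{p} < \frac{q+2}{2q}\,\frac{N-1}{N}$, strictly on both sides (the left inequality is strict because $p<\frac{2N}{N-2}$). For $q\ge 2$, condition (\ref{eq:p-r-condition-non-self-dual-q-greater-2}) reduces to $\frac{1}{p}<\frac{N-1}{2N}$, and this is strict since $p>\frac{2N}{N-1}\frac{2q}{q+2}\ge \frac{2N}{N-1}$. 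For $q<2$, the pair (\ref{eq:p-r-condition-non-self-dual-q-smaller-2}) at $r=p$ is a pair of strict inequalities that, after rearrangement, is equivalent to the condition that $(1/p,1/p)$ lies strictly inside the trapezoidal region of Remark~\ref{non-self-dual}(iii); this trapezoid is symmetric about the diagonal, and its intersection with the diagonal is precisely the open segment corresponding to $\frac{2N}{N-1}\frac{2q}{q+2}<p<\frac{2N}{N-2}$, in which our $p$ lies.

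The third step is to invoke openness. Since the admissible region is open and contains $(1/p,1/p)$, there exists $\varepsilon>0$ such that every point within $\varepsilon$ of $(1/p,1/p)$ in $(0,1)^2$ still satisfies all the required inequalities. I would then choose $\sigma_1<p<\sigma_2$ with $|\frac{1}{\sigma_i}-\frac{1}{p}|<\varepsilon$ for $i=1,2$; this makes each of the four points $(1/\sigma_i,1/p)$ and $(1/p,1/\sigma_i)$ admissible.

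Finally, applying Lemma~\ref{non-selfdual-0} with the pair $(\tilde p,r)=(\sigma_i,p)$ yields that $K_\sQ$ is bounded $L^{\sigma_i'}_G(\R^N)\to L^p_G(\R^N)$, and applying it with $(\tilde p,r)=(p,\sigma_i)$ yields boundedness $L^{p'}_G(\R^N)\to L^{\sigma_i}_G(\R^N)$, for $i=1,2$, as desired. I expect no serious obstacle; the only subtle step is the verification of strict interiority in case $q<2$, where the appeal to the geometric description in Remark~\ref{non-self-dual}(iii) replaces a direct but tedious manipulation of (\ref{eq:p-r-condition-non-self-dual-q-smaller-2}).
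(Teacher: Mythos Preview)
Your proposal is correct and matches the paper's intended reasoning: the paper states the corollary as an immediate consequence of Lemma~\ref{non-selfdual-0} without further proof, and your argument supplies exactly the natural details, namely that the admissible region of pairs $(1/\tilde p,1/r)$ contains an open neighborhood of the self-dual point $(1/p,1/p)$ (cf.\ Remark~\ref{non-self-dual}(i),(iii)), so one may perturb to obtain the required $\sigma_1<p<\sigma_2$.
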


Next we note the following variant of \cite[Lemma 4.1]{Evequoz2015}.

\begin{lemma}
\label{K-locally-compact}
The operator $K_{\sQ}: L_G^{p'}(\RN) \to L_G^{p}(\RN)$ is locally compact, i.e., the operators 
$$
K_{\sQ}\mathds{1}_{B} : L_G^{p'}(\RN) \to L_G^{p}(\RN) \qquad \text{and}\qquad \mathds{1}_{B}K_{\sQ} : L_G^{p'}(\RN) \to L_G^{p}(\RN)
$$
are compact for every bounded and measurable set $B \subset \RN$. 
\end{lemma}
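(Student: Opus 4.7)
Since $\operatorname{Re}\Phi$ is a real-valued radial function, the operator $R$, and therefore $K_\sQ$, is self-adjoint with respect to the bilinear pairing $\langle u,v\rangle = \int_{\R^N} u v\,dx$. It follows that $(\mathds{1}_B K_\sQ)^* = K_\sQ \mathds{1}_B$, and by Schauder's theorem it is enough to prove that $\mathds{1}_B K_\sQ \colon L^{p'}_G(\R^N) \to L^p_G(\R^N)$ is compact. To this end I would split $K_\sQ = K_\sQ^1 + K_\sQ^2$ according to the decomposition $\Phi = \Phi_1 + \Phi_2$ of Section~\ref{sec:resolv-estim-g}, writing $K_\sQ^i f = Q^{1/p}[(\operatorname{Re}\Phi_i)\ast(Q^{1/p} f)]$.

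For $\mathds{1}_B K_\sQ^2$, the pointwise bound \eqref{eq:Phi-2-estimate} combined with $p \in (2,\tfrac{2N}{N-2})$ gives $\operatorname{Re}\Phi_2 \in L^{p/2}(\R^N)$, so Young's inequality shows that convolution with $\operatorname{Re}\Phi_2$ is bounded $L^{p'}\to L^p$ with norm at most $\|\operatorname{Re}\Phi_2\|_{p/2}$. I would then approximate $\operatorname{Re}\Phi_2$ in $L^{p/2}$-norm by compactly supported continuous kernels $\rho_j$. For each $\rho_j$, the function $\rho_j\ast(Q^{1/p}f)$ is uniformly bounded and equicontinuous on $\overline B$ when $f$ ranges over a bounded set in $L^{p'}$ (since translations of $\rho_j$ are $L^p$-continuous), so by Arzelà--Ascoli and bounded multiplication by $\mathds{1}_B Q^{1/p}$ into $L^p(B)$, the auxiliary operator $f \mapsto \mathds{1}_B Q^{1/p}[\rho_j\ast(Q^{1/p}f)]$ is compact. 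Letting $\rho_j \to \operatorname{Re}\Phi_2$ gives $\mathds{1}_B K_\sQ^2$ as a uniform limit of compact operators.

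For $\mathds{1}_B K_\sQ^1$ the slow decay $|\Phi_1(x)|\leq C(1+|x|)^{(1-N)/2}$ means that $\Phi_1 \notin L^s(\R^N)$ for $s \leq \tfrac{2N}{N-1}$, so Young's inequality is no longer available in the relevant range. I would instead mimic the proof of Proposition~\ref{prop:resolventestu3}: use $\Phi_1 = (2\pi)^{-N/2}\Phi_1\ast\varphi$ with $\varphi$ as in Proposition~\ref{prop:lambdaestimate} and the dyadic partition $\Phi_1 = \sum_{j\ge 0}\psi_j\Phi_1$ to decompose
\begin{equation*}
K_\sQ^1 = (2\pi)^{-N/2}\sum_{j\geq 0} S_j, \qquad S_j f := Q^{1/p}\bigl[(\psi_j\operatorname{Re}\Phi_1)\ast\varphi\ast(Q^{1/p}f)\bigr].
\end{equation*}
The estimates of Proposition~\ref{prop:resolventestu3} applied with $Q^{1/p}$ in place of $Q$ (which is legitimate because $(G,q,Q^{1/p})$ is the admissible extension triple assumed here) give $\|S_j\|_{L^{p'}\to L^p}\leq C 2^{jA_{p,q}/2}$ with $A_{p,q}<0$. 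Each kernel $(\psi_j\operatorname{Re}\Phi_1)\ast\varphi$ belongs to $\cS(\R^N)$ (convolution of a bounded compactly supported function with a Schwartz function), so the same Arzelà--Ascoli reasoning as above shows $\mathds{1}_B S_j$ is compact for every $j$. Summing the resulting geometric series in operator norm yields compactness of $\mathds{1}_B K_\sQ^1$.

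The principal obstacle is Step~3, the $\Phi_1$-piece: because $\Phi_1$ fails to be in $L^{p/2}$ in the full range of $p$ allowed here, neither a straightforward Young approximation nor an elliptic regularity argument based on $(-\Delta-1)Rg = g$ (which would require global $L^p$-control of $v_n = R(Q^{1/p}f_n)$ that is not available when $Q$ vanishes on $B$) suffices. The decisive input is the Fourier-restriction machinery of Sections~\ref{sec:fourier-estimates-g}--\ref{sec:resolv-estim-g}: the geometric decay of the dyadic norms $\|S_j\|$ is precisely what the admissible-extension-triple hypothesis buys us through Lemma~\ref{dual-restriction-est} and the intermediate $L^2$-estimate in the proof of Proposition~\ref{prop:resolventestu3}.
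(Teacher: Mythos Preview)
Your argument is correct and takes a genuinely different route from the paper's. The paper does not re-open the kernel $\Phi=\Phi_1+\Phi_2$ at this point; instead it fixes an exponent $s\in[\tfrac{2(N+1)}{N-1},\tfrac{2N}{N-2})$ with $s\ge p$, quotes the already known compactness of $K_\sQ\mathds{1}_B:L^{s'}\to L^{s}$ from \cite[Lemma~4.1]{Evequoz2015}, and then upgrades to the target $L^p$ by interpolating that strong convergence with the non-selfdual bound of Corollary~\ref{non-selfdual}, which gives uniform control of $K_\sQ\mathds{1}_B v_n$ in some $L^{\sigma_1}$ with $\sigma_1<p$. So the paper's proof is essentially an interpolation argument that leverages an external compactness result together with the non-selfdual resolvent estimates.

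Your approach is more self-contained: you never use \cite{Evequoz2015} nor Corollary~\ref{non-selfdual}. The $\Phi_2$ part goes through cleanly because the standing assumption $p\in(2,\tfrac{2N}{N-2})$ makes $\operatorname{Re}\Phi_2\in L^{p/2}$, and the $\Phi_1$ part is exactly where the admissible-extension-triple hypothesis on $Q^{1/p}$ is spent, via the geometric decay $\|S_j\|\le C\,2^{jA_{p,q}/2}$ extracted from the proof of Proposition~\ref{prop:resolventestu3}. Each $\mathds{1}_B S_j$ is compact by the Arzel\`a--Ascoli reasoning since the kernels $(\psi_j\operatorname{Re}\Phi_1)\ast\varphi$ are Schwartz, and operator-norm summability finishes the job. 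The trade-off: the paper's proof is shorter given the machinery already in place (and recycles Corollary~\ref{non-selfdual}, which it needs elsewhere anyway), while yours makes the role of the restriction estimate for $\Phi_1$ completely explicit and avoids the external reference.
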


\begin{proof}
Let $B \subset \RN$ be bounded and measurable, and fix $s \in \bigl[\frac{2(N+1)}{N-1}, \frac{2N}{N-2}\bigr)$ with $s \ge p$, i.e., $s' \le p'$.  By \cite[Lemma 4.1]{Evequoz2015}, the operator $\mathds{1}_{B}K_{\sQ} : L^{s'}(\RN) \to L^{s}(\RN)$ is compact. By duality, the operator 
$K_{\sQ}\mathds{1}_{B} : L^{s'}(\RN) \to L^{s}(\RN)$ is therefore also compact.

Next, let $(v_n)_n \subset L_G^{p'}(\RN)$ be a sequence with $v_n \rightharpoonup 0$ in $L_G^{p'}(\RN)$. Then we have 
$w_n := \mathds{1}_B v_n \rightharpoonup 0$ in $L_G^{p'}(\R^N)$, and thus also in $L_G^{s'}(\R^N)$, since $B$ has finite measure. 
By the compactness property mentioned above, it follows that $K_{\sQ}\mathds{1}_{B}v_n = K_{\sQ}\mathds{1}_{B} w_n \to 0$ strongly in $L_G^{s}(\RN)$. Moreover, it follows from Corollary~\ref{non-selfdual} that the sequence of functions $K_{\sQ}\mathds{1}_{B}v_n = K_\sQ w_n$, $n \in \N$ is bounded in $L_G^{\sigma_{1}}(\R^N)$ for some $\sigma_1<p$. Since $\sigma_1<p \le s$, it thus follows by interpolation that there exists $\theta \in (0,1]$ with 
$$
\|K_{\sQ}\mathds{1}_{B}v_n\|_p \le \|K_{\sQ}\mathds{1}_{B}v_n\|^{1-\theta}_{\sigma_{1}} \|K_{\sQ}\mathds{1}_{B}v_n\|_s^\theta \to 0 
\qquad \text{as $n \to \infty$.} 
$$
Hence the operator $K_{\sQ}\mathds{1}_{B} : L_G^{p'}(\RN) \to L_G^{p}(\RN)$ is compact, and by duality it follows that also 
$\mathds{1}_{B}K_{\sQ} : L_G^{p'}(\RN) \to L_G^{p}(\RN)$ is a compact operator.
\end{proof}

As in \cite{Evequoz2015}, we now introduce the (dual) energy functional
$$
J : L_G^{p'}(\RN) \to \R, \qquad J(v) = \frac{1}{p'}\int\limits_{\RN}|v|^{p'}~dx - \frac{1}{2}\int\limits_{\RN}v[K_{\sQ}v](x)~dx. 
$$
Then $J$ is of class $\cC^1$ with 
$$ 
J'(v)w = \int\limits_{\RN}(|v|^{p'-2}v -K_{\sQ}v)w~dx \qquad \text{for $v,w \in L_G^{p'}(\R^N)$.} 
$$
Moreover, we have 
\begin{lemma}
\label{crit-point-solution}
If $v \in L_G^{p'}(\R^N)$ is a critical point of $J$, then $u = R Q^{\frac{1}{p}}v$ is a real-valued solution of 
(\ref{eq}) of class $W^{2,q}(\RN) \cap \cC^{1,s}(\RN)$ for $q \ge p$, $s \in (0,1)$.
\end{lemma}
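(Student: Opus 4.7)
The plan is to read off the Euler--Lagrange identity satisfied by $v$, invert it algebraically to identify $u := R(Q^{\frac{1}{p}}v)$ as a solution of the integral equation $u = R(Q|u|^{p-2}u)$, and then run a regularity bootstrap based on the Helmholtz resolvent estimates established earlier in the paper.

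\textbf{Step 1 (Euler--Lagrange identity and integral equation).} From $J'(v) = 0$ and the formula for $J'$ displayed just before the lemma, I would first infer
\begin{equation*}
|v|^{p'-2}v \;=\; K_{\sQ} v \;=\; Q^{\frac{1}{p}} R\bigl(Q^{\frac{1}{p}} v\bigr) \qquad \text{a.e.\ in } \RN.
\end{equation*}
With the definition $u := R\bigl(Q^{\frac{1}{p}} v\bigr)$ this reads $|v|^{p'-2}v = Q^{\frac{1}{p}} u$ a.e. Since the map $t \mapsto |t|^{p'-2}t$ on $\R$ is inverted by $s \mapsto |s|^{p-2}s$ (because $(p-1)(p'-1)=1$) and $Q \ge 0$, I obtain $v = Q^{\frac{1}{p'}}|u|^{p-2}u$ pointwise a.e.; here I use that $v$ can be taken real-valued, as $J$ is real on the real subspace of $L^{p'}_G(\RN)$ and $R$ preserves real-valuedness. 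Consequently $Q^{\frac{1}{p}} v = Q\,|u|^{p-2}u$, and the definition of $u$ gives the integral equation $u = R\bigl(Q|u|^{p-2}u\bigr)$. Since $R = (\operatorname{Re}\Phi)\ast(\cdot)$ and $\Phi$ is the fundamental solution in~(\ref{fundsol-section}), this identity implies that $u$ solves $-\Delta u - u = Q|u|^{p-2}u$ in the sense of distributions.

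\textbf{Step 2 (Initial global integrability).} The relation $|v|^{p'} = Q|u|^p$ immediately yields $Q|u|^p \in L^1(\RN)$, so that $Q|u|^{p-2}u \in L^{p'}(\RN)$ with $\|Q|u|^{p-2}u\|_{p'}^{p'} = \int_{\RN} Q^{p'}|u|^p\,dx \le \|Q\|_\infty^{p'-1}\int_{\RN} Q|u|^p\,dx < \infty$. To upgrade this to $u \in L^p(\RN)$, I would apply the self-dual resolvent bound of Theorem~\ref{resolvent-estimates-intro} together with the Kenig--Ruiz--Sogge estimate $\cR\colon L^{p'}\to L^p$ on the range $p\in[\tfrac{2(N+1)}{N-1},\tfrac{2N}{N-2}]$; below the Stein--Tomas threshold the admissibility of $(G,q,Q^{1/p})$ and the non-selfdual estimates of Theorem~\ref{resolvent-estimates-non-self-dual} supply the missing mapping property for $G$-invariant data.

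\textbf{Step 3 (Bootstrap and PDE regularity).} From $u\in L^p(\RN)$ I would run a standard iterative bootstrap: at each stage $|u|^{p-2}u\in L^{r/(p-1)}$ once $u\in L^r$, and applying $R$ with off-diagonal $(L^a,L^b)$-estimates taken from the trapezoid in Remark~\ref{non-self-dual} strictly enlarges the Lebesgue exponent. A finite number of iterations gives $u\in L^q(\RN)$ for every $q\in[p,\infty)$. Interior Calder\'on--Zygmund theory for $-\Delta u = u + Q|u|^{p-2}u \in L^q(\RN)$, combined with $u\in L^q(\RN)$, then yields $u\in W^{2,q}(\RN)$ for every $q\ge p$. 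Finally, choosing $q>N$ and invoking the Sobolev embedding $W^{2,q}(\RN)\hookrightarrow C^{1,\,1-N/q}(\RN)$ and letting $q\to\infty$ gives $u\in C^{1,s}(\RN)$ for every $s\in(0,1)$.

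\textbf{Main obstacle.} The algebraic Step~1 and the concluding PDE regularity in Step~3 are routine once the integrability is in place. The delicate point is the bootstrap itself: the self-dual bound $\cR\colon L^{p'}\to L^p$ does not by itself improve integrability, so one must exploit the non-selfdual range of Theorem~\ref{resolvent-estimates-non-self-dual} through an explicit, finitely many, carefully chosen exponent pairs $(\tilde p_n,r_n)$ with $r_n$ strictly increasing to $\infty$. This parallels \cite[Proposition~4.2]{Evequoz2015} but has to be carried out in the present $G$-invariant, weighted setting, using $Q\in L^\infty_G(\RN)$ to preserve the applicability of the $G$-invariant estimates at every iteration.
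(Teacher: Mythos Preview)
Your approach is essentially correct and parallels the paper's, with two points worth flagging.

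First, in Step~1 you pass directly from $J'(v)=0$ to the pointwise identity $|v|^{p'-2}v = K_{\sQ}v$. But $J$ is defined only on $L^{p'}_G(\RN)$, so criticality gives $J'(v)w=0$ only for $G$-invariant test functions $w$. The paper handles this explicitly by a Haar-averaging (symmetric criticality) argument: for arbitrary $w\in L^{p'}(\RN)$ one sets $w_G = \int_G w\circ A\,d\mu(A)$, uses the $G$-invariance of $|v|^{p'-2}v - K_{\sQ}v$ to write $\int(|v|^{p'-2}v - K_{\sQ}v)\,w\,dx = \int(|v|^{p'-2}v - K_{\sQ}v)\,w_G\,dx = J'(v)w_G = 0$, and concludes. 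Your shortcut can also be made rigorous --- test the $G$-invariant function $f := |v|^{p'-2}v - K_{\sQ}v \in L^p_G(\RN)$ against $|f|^{p-2}f \in L^{p'}_G(\RN)$ --- but you should say so.

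Second, for the regularity the paper simply invokes \cite[Lemma~4.3]{Evequoz2015}, whereas you spell out the bootstrap; that is the same argument expanded. One correction: in Steps~2--3 you appeal to Theorem~\ref{resolvent-estimates-non-self-dual} and Remark~\ref{non-self-dual} for off-diagonal mapping properties of $R$, but those results concern the \emph{weighted} operator $\cR_{Q^{1/p}}$, not $R$ itself, and so only deliver $Q^{1/p}u \in L^r$ rather than $u\in L^r$. The bootstrap you need runs on the \emph{unweighted} Guti\'errez and Kenig--Ruiz--Sogge estimates for $\cR$ (recalled just before Theorem~\ref{resolvent-estimates-non-self-dual}), which are applicable because $Q\in L^\infty(\RN)$ keeps $Q|u|^{p-2}u$ in the same Lebesgue class as $|u|^{p-2}u$; this is precisely what \cite[Lemma~4.3]{Evequoz2015} does.
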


\begin{proof}
Let $w \in L^{p'}(\RN)$, and let $w_G \in L^{p'}(\R^N)$ be defined by 
$$ 
w_{G}= \int\limits_{G} w \circ A d\mu(A),\qquad \text{i.e.,}\qquad  w_{G}(x) = \int\limits_{G} w(Ax) d\mu(A) \quad \text{for $x \in \R^N$,}
$$
where $\mu$ is the Haar-measure of $G$. Since $v$ is $G$-invariant, it follows that 
$$ 
\int\limits_{\RN}(|v|^{p'-2}v -K_{\sQ}v)w~dx = \int\limits_{\RN}(|v|^{p'-2}v -K_{\sQ}v)[w \circ A]~dx \qquad \text{for all $A \in G$}
$$
and therefore 
$$
\int\limits_{\RN}(|v|^{p'-2}v -K_{\sQ}v)w~dx = \int\limits_{\RN}(|v|^{p'-2}v -K_{\sQ}v)w_G~dx = J'(v)w_G = 0
$$
Consequently, we have $|v|^{p'-2}v = K_{\sQ}v$ in $L^{p}(\R^N)$, which implies that $u = R Q^{\frac{1}{p}}v$ satisfies the equation
$$
u = R Q|u|^{p-2}u \qquad \text{in $L^p(\R^N)$.}
$$ 
The claim now follows by \cite[Lemma 4.3]{Evequoz2015}.
\end{proof}

Next we note that the functional $J$ has a mountain pass geometry. More precisely, we have:

\begin{lemma}\label{lem:MP_geom2}
\begin{itemize}
	\item[(i)] There exists $\delta>0$ and $0<\rho<1$ such that $J(v)\geq\delta>0$ for all $v\in L^{p'}_G(\R^N)$ with $\|v\|_{p'}=\rho$.
	\item[(ii)] There is $v_0\in L^{p'}_G(\R^N)$ such that $\|v_0\|_{p'}>1$ and $J(v_0)<0$.
	\item[(iii)] Every Palais-Smale sequence for $J$ is bounded in $L^{p'}_G(\R^N)$.
\item[(iv)] There exists a Palais-Smale sequence for $J$ at the mountain pass level 
  \begin{equation}
    \label{eq:def-mountain-pass-level}
d:= \inf_{\gamma \in \Gamma}\max_{t \in [0,1]}J(\gamma(t)) \:>\:0,
  \end{equation}
where $\Gamma= \{\gamma \in C([0,1],L^{p'}_G(\R^N))\::\: \gamma(0)=0,\, \|\gamma(1)\|_{p'}>\rho,\, J(\gamma(1))<0\}$.
\end{itemize}
\end{lemma}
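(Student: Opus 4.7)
The plan is to verify the four items in order, exploiting throughout that the standing assumption $p>2$ gives $p'\in(1,2)$, so that the subquadratic term $\tfrac{1}{p'}\|v\|_{p'}^{p'}$ dominates the quadratic form $\tfrac{1}{2}\int v K_\sQ v\,dx$ near the origin while the reverse holds in the large.

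For (i), the diagonal case $r=p$ of Theorem~\ref{resolvent-estimates-non-self-dual} (equivalently Theorem~\ref{resolvent-estimates-intro}) provides $C>0$ with $\|K_\sQ v\|_p\le C\|v\|_{p'}$ for every $v\in L^{p'}_G(\R^N)$. H\"older's inequality then gives $J(v)\ge \tfrac{1}{p'}\|v\|_{p'}^{p'}-\tfrac{C}{2}\|v\|_{p'}^{2}$, and since $2-p'>0$ this is bounded below by some $\delta>0$ on any sphere $\|v\|_{p'}=\rho$ with $\rho\in(0,1)$ sufficiently small.

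For (ii), it suffices to produce a single $\bar v\in L^{p'}_G(\R^N)$ with $\alpha:=\int\bar v\,K_\sQ\bar v\,dx>0$; then $J(t\bar v)=\tfrac{t^{p'}}{p'}\|\bar v\|_{p'}^{p'}-\tfrac{\alpha}{2}t^{2}\to-\infty$ as $t\to\infty$ because $p'<2$, so taking $v_0=t\bar v$ for large $t$ delivers the required element. To construct such a $\bar v$, I would mimic the argument in \cite[Section~4]{Evequoz2015}: starting from the distributional identity (\ref{eqn:Fourier_Phi}), a Parseval computation yields, for real-valued $g$ in a suitable class,
$$
\int_{\R^N} g\,R g\,dx \;=\; \mathrm{PV}\!\int_{\R^N}\frac{|\hat g(\xi)|^{2}}{|\xi|^{2}-1}\,d\xi,
$$
which is strictly positive whenever $\hat g$ is concentrated on a compact $G$-invariant subset of $\{|\xi|>1\}$. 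Using $Q\not\equiv 0$, one then selects a $G$-invariant $\bar v$ so that $g=Q^{1/p}\bar v$ satisfies such a Fourier localisation after suitable high-frequency multiplier modifications. I expect this step to be the main technical obstacle, because $Q^{1/p}\in L^{\infty}_G$ may degenerate and one cannot freely prescribe the Fourier support of $Q^{1/p}\bar v$ from that of $\bar v$.

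For (iii), let $(v_n)\subset L^{p'}_G$ be a Palais--Smale sequence for $J$, so $J(v_n)\to c\in\R$ and $J'(v_n)\to 0$ in $(L^{p'}_G)^{*}$. Using $J'(v)v=\|v\|_{p'}^{p'}-\int v K_\sQ v\,dx$ one obtains the standard identity
$$
\Bigl(\frac{1}{p'}-\frac{1}{2}\Bigr)\|v_n\|_{p'}^{p'}\;=\;J(v_n)-\tfrac{1}{2} J'(v_n)v_n\;\le\;|J(v_n)|+\tfrac{1}{2}\|J'(v_n)\|_{(L^{p'}_G)^{*}}\|v_n\|_{p'}\;\le\;C+o(1)\|v_n\|_{p'}.
$$
Since $\tfrac{1}{p'}-\tfrac{1}{2}>0$ and $p'>1$, boundedness of $(\|v_n\|_{p'})$ follows. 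Finally, (iv) is a direct application of the mountain-pass theorem of Ambrosetti--Rabinowitz on the Banach space $L^{p'}_G(\R^N)$: by (i) the sphere $\|v\|_{p'}=\rho$ is a barrier on which $J\ge\delta$, by (ii) the class $\Gamma$ is nonempty (take for instance $\gamma(t)=tv_0$) and every path in $\Gamma$ must cross that sphere, so $d\ge\delta>0$; Ekeland's variational principle combined with a deformation argument then produces the desired Palais--Smale sequence at level $d$.
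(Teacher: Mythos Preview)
Your treatments of (i), (iii), and (iv) are correct and follow the standard line; the paper itself gives no independent argument for any of the four parts and simply cites \cite[Lemma~4.2 and Lemma~6.1]{Evequoz2015}, asserting that those proofs carry over unchanged to the $G$-invariant setting.

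For (ii), the paper again just invokes \cite[Lemma~4.2]{Evequoz2015}, so your sketch is already more detailed than what the paper provides. You are right that this is where the only real work lies, but the specific route you outline---forcing $\widehat{Q^{1/p}\bar v}$ to lie in a compact $G$-invariant subset of $\{|\xi|>1\}$---demands more than is needed, and it is precisely this overreach that creates the obstacle you flag: multiplication by a rough $Q^{1/p}$ does destroy Fourier localisation. The construction in \cite{Evequoz2015} sidesteps this by using only \emph{soft} high-frequency concentration: one fixes a nontrivial $G$-invariant $w$ with $Q^{1/p}w\not\equiv 0$ and then exploits either a dilation $w_t(x)=w(x/t)$ together with the limiting behaviour of the rescaled resolvent towards $(-\Delta)^{-1}$ (whose kernel $c_N|x|^{2-N}$ is pointwise positive), or equivalently the asymptotic $\mathrm{Re}\,\Phi(x)\sim c_N|x|^{2-N}$ as $|x|\to 0$, to make the principal-value integral positive without ever prescribing the support of $\widehat{Q^{1/p}\bar v}$. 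Your own phrase ``high-frequency multiplier modifications'' is already pointing in this direction; the gap is only that you stopped short of executing it and instead aimed for an unnecessarily rigid Fourier-support condition.
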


\begin{proof}
Since $p>2$, the parts (i)-(iii) are proved in \cite[Lemma 4.2]{Evequoz2015} for $G=\{id\}$, and the proof remains the same for general closed subgroups $G \subset O(N)$. 
Moreover, the positivity of the mountain pass level $c$ defined in (\ref{eq:def-mountain-pass-level}) is a direct consequence of (i) and (ii), which also shows that the set $\Gamma$ is nonempty. Finally, the proof of the existence of a Palais-Smale sequence for $J$ at level $d$ is exactly the same as the proof of \cite[Lemma 6.1]{Evequoz2015}. Here we note that periodicity of $Q$ was assumed in \cite[Section 6]{Evequoz2015}, but this property is not used in Lemma 6.1.  
\end{proof}

\begin{proposition}
\label{nontrivial-weak-limit-PS}
Let $(v_n)_n \subset L^{p'}_G(\R^N)$ be a Palais-Smale sequence of $J$ with $c:= \lim \limits_{n \to \infty}J(v_n) >0$. Moreover, suppose that {\em one} of the following conditions hold: 
\begin{itemize}
\item[(A1)] For some $R>0$, we have $\lim \limits_{|x| \to \infty} \|Q\|_{L^1(B_R(x))}= 0$.
\item[(A2)] For every $R>0$ we have $\lim \limits_{|x| \to \infty} N_G(x,R)= \infty$, where, for $R>0$ and $x \in \R^N \setminus \{0\}$, $N_G(x,R)$ denotes the maximal number of elements of a subset $H \subset G$ with $B_{R}(Ax) \cap B_{R}(A'x)= \varnothing$ for $A, A' \in H$.
\end{itemize}
Then, after passing to a subsequence, we have 
$$
v_n \rightharpoonup v  \qquad \text{in $L^{p'}(\R^N)$,}  
$$
where $v \in L^{p'}_G(\R^N) \setminus \{0\}$ is a critical point of $J$. 
\end{proposition}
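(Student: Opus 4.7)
My plan is to combine weak compactness, the local compactness of $K_{\sQ}$ furnished by Lemma~\ref{K-locally-compact}, and the nonvanishing theorem (Theorem~\ref{nonvanishing-section}) applied to the admissible triple $(G,q,Q^{1/p})$.

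First I would use Lemma~\ref{lem:MP_geom2}(iii) to bound $(v_n)$ in $L^{p'}_G(\RN)$ and extract a subsequence with $v_n \rightharpoonup v$ weakly in $L^{p'}_G(\RN)$. To show $J'(v)=0$, I rewrite the Palais--Smale condition as $|v_n|^{p'-2}v_n - K_\sQ v_n = g_n$ with $\|g_n\|_p\to 0$. For any bounded measurable $B\subset\RN$, Lemma~\ref{K-locally-compact} yields $\mathds{1}_B K_\sQ v_n \to \mathds{1}_B K_\sQ v$ strongly in $L^p$, and since $\|\mathds{1}_B g_n\|_p\to 0$, also $\mathds{1}_B|v_n|^{p'-2}v_n \to \mathds{1}_B K_\sQ v$ in $L^p$. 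Inverting the continuous bijection $t\mapsto|t|^{p'-2}t$ via $s\mapsto|s|^{p-2}s$ gives $v_n\to|K_\sQ v|^{p-2}K_\sQ v$ strongly in $L^{p'}(B)$, which by uniqueness of weak limits must coincide with $v$. Hence $|v|^{p'-2}v = K_\sQ v$, i.e.\ $J'(v)=0$, and in particular $v_n\to v$ strongly in $L^{p'}_{\mathrm{loc}}(\RN)$.

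To prove $v\neq 0$, I would argue by contradiction, assuming $v=0$. The identity
$J(v_n)-\tfrac12 J'(v_n)v_n = (\tfrac1{p'}-\tfrac12)\|v_n\|^{p'}_{p'}$
together with $J(v_n)\to c>0$ and $J'(v_n)v_n\to 0$ gives $\|v_n\|^{p'}_{p'}\to L>0$ and $\int v_n K_\sQ v_n\,dx\to L$. Since $K_\sQ=Q^{1/p}R(Q^{1/p}\cdot)$ with $R=\mathrm{Re}\,\cR$ and $v_n$ is real, this last integral equals $\mathrm{Re}\int v_n\,\cR_{Q^{1/p}}v_n\,dx$, which verifies the hypothesis of Theorem~\ref{nonvanishing-section} for the triple $(G,q,Q^{1/p})$. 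Passing to a further subsequence, I obtain $R,\zeta>0$ and $(x_n)\subset\RN$ with $\int_{B_R(x_n)}|Q^{1/p}v_n|^{p'}\,dx\geq\zeta$. The strong local convergence $v_n\to 0$ in $L^{p'}_{\mathrm{loc}}$ prevents $(x_n)$ from having a bounded subsequence, so $|x_n|\to\infty$.

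It then remains to close the argument under (A1) or (A2). \emph{Under (A2)}, the $G$-invariance of $v_n$ and $Q$ combined with the change of variables $y=A^{-1}x$ gives $\int_{B_R(Ax_n)}|Q^{1/p}v_n|^{p'}\,dx\geq\zeta$ for every $A\in G$, and picking $N_G(x_n,R)\to\infty$ group elements producing pairwise disjoint balls leads to $\|Q\|_\infty^{p'/p}\|v_n\|^{p'}_{p'}\geq N_G(x_n,R)\zeta\to\infty$, contradicting boundedness of $(v_n)$. \emph{Under (A1)}, I would start from the pointwise bound $|v_n|^{p'}\leq 2^{p-1}(Q|u_n|^p+|g_n|^p)$ with $u_n:=R(Q^{1/p}v_n)$, obtained from the PS equation via $|v_n|=(|v_n|^{p'-1})^{p-1}\leq(Q^{1/p}|u_n|+|g_n|)^{p-1}$ and the identity $(p-1)p'=p$. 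Multiplying by $Q^{p'/p}$ and using $Q|u_n|^p=|K_\sQ v_n|^p$ reduces the matter to showing $\int_{B_R(x_n)}Q^{p'-1}|K_\sQ v_n|^p\,dx\to 0$. By Corollary~\ref{non-selfdual} there exists $\sigma_2>p$ with $K_\sQ\colon L^{p'}_G\to L^{\sigma_2}_G$ bounded; choosing $a>p-1$ with $pa/(a-1)\leq\sigma_2$ (always possible for $\sigma_2>p$) and applying Hölder's inequality estimates this integral by a product of $\|Q\|_{L^1(B_R(x_n))}^{1/a}$, which vanishes by (A1), and $\|K_\sQ v_n\|^p_{L^{pa'}(B_R(x_n))}$, which stays uniformly bounded.

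The principal obstacle is case (A1): the $L^1$-decay of $Q$ on balls is strictly weaker than the pointwise decay typically assumed in the literature, so one cannot close the argument by the simple comparison $\int|Q^{1/p}v_n|^{p'}\leq\|Q\|_\infty^{p'/p}\int|v_n|^{p'}$ that succeeds under (A2). Circumventing this difficulty requires genuine use of the improved $L^{\sigma_2}$-integrability of $K_\sQ v_n$ provided by the non-selfdual resolvent estimates.
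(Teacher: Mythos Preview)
Your proof is correct and uses the same toolbox as the paper (boundedness of PS sequences, the nonvanishing Theorem~\ref{nonvanishing-section}, local compactness of $K_\sQ$, and the non-selfdual estimates of Corollary~\ref{non-selfdual}), but the organization and the treatment of case (A1) are genuinely different.

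\textbf{Order of the argument.} You first establish $|v|^{p'-2}v=K_\sQ v$ on every bounded set via the local compactness of $K_\sQ$, obtaining strong local convergence $v_n\to v$ in $L^{p'}_{\mathrm{loc}}$, and only then argue by contradiction that $v\neq 0$. The paper proceeds in the reverse order: it applies the nonvanishing theorem immediately, shows directly that the concentration centers $(x_n)$ must be bounded (without assuming $v=0$), and deduces $v\neq 0$ and criticality afterwards. Your ordering has the advantage that the exclusion of bounded $(x_n)$ under $v=0$ is a one-line consequence of strong local convergence.

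\textbf{Case (A1).} Here the two proofs are dual to each other. The paper tests the PS relation against $\varphi_n=Q^{p-1}v_n\mathds{1}_{B_R(x_n)}$ and then uses the estimate $K_\sQ:L^{\sigma'}_G\to L^{p}_G$ with $\sigma'<p'$ from Corollary~\ref{non-selfdual} to show $\|K_\sQ\varphi_n\|_p\le C\|\varphi_n\|_{\sigma'}\to 0$ via H\"older and $\|Q\|_{L^1(B_R(x_n))}\to 0$. You instead manipulate the PS relation pointwise to reach $|Q^{1/p}v_n|^{p'}\le 2^{p-1}(Q^{p'-1}|K_\sQ v_n|^p+Q^{p'-1}|g_n|^p)$ and then invoke the companion estimate $K_\sQ:L^{p'}_G\to L^{\sigma_2}_G$ with $\sigma_2>p$, again closing with H\"older and the $L^1$-decay of $Q$. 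Both arguments hinge on exactly the same improved integrability provided by Corollary~\ref{non-selfdual}, just exploited on opposite sides of the duality; your version makes it transparent that the crucial input is $K_\sQ v_n\in L^{\sigma_2}$ for some $\sigma_2>p$. One minor remark: assumption (A1) is stated for \emph{some} $R>0$, while you use it for the $R$ produced by the nonvanishing theorem; the paper notes (and you should too) that an elementary covering argument extends (A1) to all $R>0$.
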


\begin{proof}
We first note that $(v_n)_n$ is bounded by Lemma~\ref{lem:MP_geom2}. Consequently, since $L^{p'}_G(\R^N)$ is reflexive, there exists $v \in L^{p'}_G(\R^N)$ such that 
\begin{equation}
  \label{eq:v-n-weak-conv}
v_n \rightharpoonup v  \qquad \text{in $L^{p'}(\R^N)$.}  
\end{equation}
Moreover, 
$$ 
\lim\limits_{n\to\infty}\int\limits_{\RN} v_{n}K_\sQ v_{n}\,dx = \frac{2p'}{2-p'} \lim\limits_{n\to\infty}\left( J(v_{n}) - \frac{1}{p'}J'(v_{n})v_{n}\right) = \frac{2p'}{2-p'} c>0 
$$
by assumption, which implies that 
$$ 
\lim\limits_{n\to\infty} \Bigl|\int\limits_{\RN} v_{n} \cR_{\text{\tiny $Q^{\frac{1}{p}}$}} v_{n}~dx\Bigr| >0. 
$$
Since moreover $(G,q,Q^{\frac{1}{p}})$ is an admissible extension triple by assumption, Theorem~\ref{nonvanishing-intro} applies and yields $\delta, R >0$ and a sequence of points $(x_{n})_{n} \subset \RN$ such that, after passing to a subsequence,
\begin{equation}\label{nonvanish}
\int\limits_{B_{R}(x_{n})}|Q^{\frac{1}{p}}v_{n}|^{p'}~dx \geq  \delta > 0, \quad \text{for all } n \in \N.
\end{equation} 
We claim that $(x_{n})_{n}$ has to be bounded. To see this, we argue by contradiction and assume that, after passing to a subsequence again, $|x_n| \to \infty$.
We distinguish two cases.\\
{\bf Case 1: (A1) holds.}\\
In this case we put $\varphi_n := Q^{p-1}v_n \mathds{1}_{B_R(x_n)}$, and we note that $(\varphi_n)_n$ is a bounded sequence in $L^{p'}(\R^N)$. Moreover, we have 
\begin{align}
  \label{eq:nonvanish-contradiction}
\int\limits_{B_{R}(x_{n})}|Q^{\frac{1}{p}}v_{n}|^{p'}~dx&= \int\limits_{\RN}|v_{n}|^{p'-2}v_{n}\varphi_ndx
= J'(v_{n})\varphi_{n}  + \int\limits_{\RN}v_{n}K_\sQ \varphi_{n}dx\\ 
&\leq o(1)\left\|\varphi_n\right\|_{p} + \Bigl| \int\limits_{\RN} v_{n}K_\sQ \varphi_{n} ~dx\Bigl| = o(1) + \|v_{n}\|_{p'} \|K_\sQ \varphi_{n}\|_{p}
\nonumber
\end{align}
as $n \to \infty$. By Corollary~\ref{non-selfdual}, there exists $\sigma>p'$ and $C>0$ with the property that 
$$
 \|K_\sQ \varphi_{n}\|_{p}\leq C  \|\varphi_{n}\|_{\sigma'}\qquad \text{for $n \in \N$,} 
$$
whereas, since $\sigma' < p'$ and by Hölder's inequality,
\begin{align*}
\|\varphi_{n}\|_{\sigma'} &= \|Q^{p-1}v_n\|_{L^{\sigma'}(B_R(x_n))}
\le  \Bigl(\int_{B_R(x_n)} |Q|^{\frac{p' \sigma'(p-1)}{p'-\sigma'}}dx \Bigr)^{\frac{p'-\sigma'}{p' \sigma'}} \|v_{n}\|_{p'}\\
&\le \Bigl(\|Q\|_{L^1(B_R(x_n))}\Bigr)^{\frac{p'-\sigma'}{p' \sigma'}} 
\|Q\|_\infty^{\bigl(\frac{p' \sigma'(p-1)}{p'-\sigma'}-1\bigr)\frac{p'-\sigma'}{p' \sigma'}} \|v_{n}\|_{p'}
\end{align*}
Since $\|Q\|_{L^1(B_R(x_n))} \to 0$ by (\ref{eq:asymptotic-Q-condition}), it thus follows that $\|\varphi_n\|_{\sigma'} \to  0$ as $n \to \infty$.
Here we note that, by an easy covering argument, (\ref{eq:asymptotic-Q-condition}) holds for every $R>0$ if it holds for one $R>0$. Going back to (\ref{eq:nonvanish-contradiction}), we thus deduce that 
$$
\int\limits_{B_{R}(x_{n})}|Q^{\frac{1}{p}}v_{n}|^{p'}~dx \to 0 \qquad \text{as $n \to \infty$,}
$$
which contradicts (\ref{nonvanish}).\\ 
{\bf Case 2: (A2) holds.}\\
In this case it follows from (\ref{nonvanish}) and the fact that $v_n$ and $Q$ are $G$-invariant that 
$$
\|Q^{\frac{1}{p}} v_n\|_{p'}^{p'} \ge N_G(x_n,R)  \int\limits_{B_{R}(x_{n})}|Q^{\frac{1}{p}}v_{n}|^{p'}~dx \geq N_G(x_n,R) \delta \to \infty
$$
as $n \to \infty$, which contradicts the boundedness of the sequence $(v_n)_n$
in $L^{p'}(\R^N)$.\\
Since in both cases we have reached a contradiction, we conclude that $(x_{n})_{n}$ is bounded. Therefore, making $R$ larger if necessary, we can assume that \eqref{nonvanish} holds with $x_{n} = 0$ for all $n \in \N$.  
Now for any  fixed $G$-invariant function $\varphi \in \ceinftyc(\RN)$, any $r> 0$ and $n,m \in \N$ we have
\begin{align*}
	\left| \int\limits_{\RN} \left(|v_{n}|^{p'-2}v_{n} - |v_{m}|^{p'-2}v_{m}\right)\varphi~  dx \right| &=\left|J'(v_{n})\varphi - J'(v_{m})\varphi + \int\limits_{B_{r}}\varphi K_\sQ (v_{n}- v)~dx \right|\\
&\leq
\left\|J'(v_{n}) - J'(v_{m})\right\|\left\|\varphi\right\|_{p'} + \left\|\mathds{1}_{B_{r}}K_\sQ(v_{n} - v_{m})\right\|_{p} \left\|\varphi\right\|_{p'}.
\end{align*}
So by assumption and the local compactness of $K_\sQ$, as stated in Lemma~\ref{K-locally-compact}, we get that $(|v_{n}|^{p'-2}v_{n})_{n \in \N}$ is a Cauchy sequence in $L^{p}(B_{R})$. Consequently, $|v_{n}|^{p'-2}v_{n} \to \tilde{v}$ strongly in $L^{p}(B_{R})$ for some $\tilde{v} \in L^{p}(B_{R})$, and passing to a subsequence also pointwisely almost everywhere on $B_{R}$. This clearly implies that $v_{n} \to |\tilde{v}|^{p-2}\tilde{v}$ almost everywhere on $B_r$. Now (\ref{eq:v-n-weak-conv}) and the uniqueness of the weak limit gives $\tilde{v} = |v|^{p'-2}v$ and 
$$ 
0<\delta \leq \int\limits_{B_{R}}|Q^{\frac{1}{p}}v_{n}(x)|^{p'}~dx \to \int\limits_{B_{R}}|Q^{\frac{1}{p}}v|^{p'}~dx, \quad \text{as } n \to \infty 
$$
which implies that $v \neq 0$.\\
For every $G$-invariant function $\varphi \in\ceinftyc$, we now have
\begin{align*}
J'(v)\varphi &= \int\limits_{\RN} |v|^{p'-2}v \varphi ~dx - \int\limits_{\RN} \varphi K_\sQ(v)~dx \\
&= \lim\limits_{n\to \infty} \left[\int\limits_{\RN} |v_{n}|^{p'-2}v_{n} \varphi ~dx - \int\limits_{\RN} \varphi K_\sQ(v_{n})~dx\right] \\
&= \lim\limits_{n\to\infty}J'(v_{n})\varphi  = 0
\end{align*}
using the local strong convergence of $|v_{n}|^{p'-2}v_{n}$ and the continuity of linear operator $K_\sQ: L^{p'}_G(\R^N) \to L^{p}_G(\R^N)$. By density, it now follows that $J'(v)w= 0$ for every $w \in L^{p'}_G(\R^N)$, i.e., $v \in L^{p'}_G(\RN)\setminus\{0\}$ is a critical point of $J$.
\end{proof}

We now have all the tools to complete the proofs of our main existence results for nontrivial $G$-invariant dual ground state solutions as stated in the introduction. 

\begin{proof}[Proof of Theorem~\ref{dual-ground-state-intro} (completed)]
By Lemma~\ref{lem:MP_geom2}(iv), there exists a Palais-Smale sequence $(v_n)_n$ in $L^{p'}_G(\R^N)$ for $J$ at the mountain pass level $d>0$. By Proposition~\ref{nontrivial-weak-limit-PS}, we have $v_n \rightharpoonup v$ in $L^{p'}_G(\R^N)$ after passing to a subsequence, where $v \in L^{p'}_G(\R^N)$ is a nontrivial critical point of $J$. Here we note that assumption (A1) of Proposition~\ref{nontrivial-weak-limit-PS} is satisfied by (\ref{eq:asymptotic-Q-condition}).   
The proof is finished by Lemma~\ref{crit-point-solution}.
\end{proof}

\begin{proof}[Proof of Corollary~\ref{dual-ground-state-intro-cor-2}]
Since $Q^{\frac{1}{p}} \in L^\infty(\R^N)$, it follows by the classical Stein-Tomas estimate that $(G,q,Q^{\frac{1}{p}})$ is an admissible extension triple for $q= 
\frac{2(N+1)}{N-1}$. Since 
$$
p \in \bigl(\frac{2(N+1)}{N-1},\frac{2N}{N-2}\bigr) = \bigl(\frac{2N}{N-1} \frac{2 q}{q+ 2}, \frac{2N}{N-2}\bigr),
$$
the assumptions of Theorem~\ref{dual-ground-state-intro} are satisfied and yield the existence of a nontrivial solution $v \in L^{p'}_G(\R^N)$ of (\ref{eqn:intro_helmholtz}). 
\end{proof}

\begin{proof}[Proof of Theorem~\ref{dual-ground-state-intro-cor-3}]
As above, it follows by the classical Stein-Tomas estimate that $(G_k,q,Q^{\frac{1}{p}})$ is an admissible extension triple for $q= 
\frac{2(N+1)}{N-1}$, whereas   
$$
p \in \bigl(\frac{2(N+1)}{N-1},\frac{2N}{N-2}\bigr) = \bigl(\frac{2N}{N-1} \frac{2 q}{q+ 2}, \frac{2N}{N-2}\bigr).
$$
Moreover, since $2 \le k \le N-2$, we have
\begin{equation}
  \label{eq:N_G-infty-application}
\lim \limits_{|x| \to \infty} N_G(x,R)= \infty \qquad \text{for every $R>0$,}  
\end{equation}
where $N_G(x,R)$ is defined as in Proposition~\ref{nontrivial-weak-limit-PS}. This fact is noted without proof in \cite[Proof of Corollary 1.25]{Willem:1996}, and we give the short proof here for the reader's convenience. In fact, (\ref{eq:N_G-infty-application}) follows already from the fact that
the minimal orbit dimension of $G_k$ is $\min \{k-1,N-k-1\}$ and therefore greater than or equal to one by assumption. In particular, for every $n \in \N$ and $\theta \in \SPN$, there exists $\eps>0$ and a subset $H_\theta \subset G$ with $B_{\eps}(A \theta) \cap B_{\eps}(A'\theta)= \varnothing$ for every $A, A' \in H_\theta$. Moreover, by a straightforward compactness argument, $\eps>0$ can be chosen to depend only on $n$ and not on $\theta \in \SPN$. Hence, if $R>0$ is given, $x \in \R^N$ satisfies $|x| \ge \frac{R}{\eps}$ and $\theta$ equals $\frac{x}{|x|}$, we have $B_{R}(A \theta) \cap B_{R}(A'\theta)= \varnothing$ for every $A, A' \in H_\theta$ and therefore $N_G(x,R) \ge n$. This shows (\ref{eq:N_G-infty-application}).

Hence assumption (A2) of Proposition~\ref{nontrivial-weak-limit-PS} is satisfied, and thus the proof is completed as the proof of Theorem~\ref{dual-ground-state-intro} above.
\end{proof}

\begin{proof}[Proof of Corollary~\ref{dual-ground-state-intro-cor-1}]
  We first note that $Q$ satisfies the asymptotic condition (\ref{eq:asymptotic-Q-condition}). Indeed, since $0 \le Q \le c \mathds{1}_{L_\alpha}$ for some $c>0$ by assumption, it suffices to show that 
\begin{equation}
\label{eq:asymptotic-Q-condition-special-case}
|L_\alpha \cap B_{R}(x) | \to 0 \qquad \text{as $|x| \to \infty$ for every $R>0$.}  
\end{equation}
To see the latter, it suffices to consider a sequence $(x_{n})_{n} = (x^{(N-k)}_{n},x^{(k)}_{n}) \subset \R^{N-k} \times \R^{k}$ with $x^{(N-k)}_{n} = 0$ for all $n \in \N$ and $r_n:= |x_n| = |x^{(k)}_{n}|  \to \infty$ as $n \to \infty$. In this case we have $|x^{(k)} - x^{(k)}_{n} | < R$ for $x \in B_{R}(x_{n})$
and therefore 
\begin{align*}
|L_\alpha \cap B_{R}(x_{n})| &\le \int\limits_{\bigl \{|x^{(k)} - x^{(k)}_{n} | < R\bigr\}} \:\int\limits_{\bigl\{|x^{(N-k)}| \leq a|x^{(k)}|^{-\alpha}\bigr\}} ~dx^{(N-k)}~dx^{(k)} \leq C \int\limits_{\{|x^{(k)} - x^{(k)}_{n} | < R\}} |x^{(k)}|^{-(N-k)\alpha}~dx^{(k)}\\
&= C \int\limits_{|z^{(k)}| < R} |z^{(k)} +x^{(k)}_{n}|^{-(N-k)\alpha} ~dx^{(k)} 
\leq 
C \int\limits_{|z^{(k)}| < R} \left(|x^{(k)}_{n}|  - R \right)^{-(N-k)\alpha}~dx^{(k)} \\
&=C(r_n-R)^{-(N-k)\alpha} \: \to \: 0 \qquad \text{as $n \to \infty$}
\end{align*}  
with constants $C>0$. Hence (\ref{eq:asymptotic-Q-condition-special-case}) holds. 

Next, we first consider the case $k=1$. By Theorem \ref{theo:intro1-alpha-equals-beta} additionally the condition $\alpha > \frac{1}{N-1}$ is required and we set $\lambda = \frac{2(N-1) - \frac{2}{\alpha}}{N-2}$. By case distinction we see  that $\mu_{N,1,\alpha} = \max\left\lbrace \frac{2N}{N-1}\frac{2\lambda}{\lambda+2},2 \right\rbrace$ for  $\alpha > \frac{1}{N-1}$. Thus by Theorem \ref{theo:intro1-alpha-equals-beta} we may fix any $q \in \left(\mu_{N,1,\alpha},p\right)$ with $p \in \left(\mu_{N,1,\alpha}, \frac{2N}{N-2}\right)$ such that $(G_{1},q,\mathds{1}_{L_{\alpha}})$ is an admissible extension triple. Since $0 \leq Q^{\frac{1}{p}} \leq c^{\frac{1}{p}}\mathds{1}_{L_{\alpha}}$, it follows that also $(G_{1},q,Q^{\frac{1}{p}})$ is an admissible extension triple. Thus Theorem \ref{dual-ground-state-intro} applies and yields that \eqref{eqn:intro_helmholtz} admits a dual bound state solution. \\
The case $k=N-1$ now follows similarly: Consider additionally $\alpha < N-1$, set $\lambda = \frac{2(N-1) - 2\alpha}{N-2}$ and observe that for $\alpha < N-1$ the expression $\mu_{N,N-1,\alpha}$ is chosen such that $\mu_{N,N-1,\alpha} = \max\left\lbrace \frac{2N}{N-1}\frac{2\lambda}{\lambda+2},2 \right\rbrace$. Then, for $q,p$ as above with $\mu_{N,N-1,\alpha}$ instead of $\mu_{N,1,\alpha}$ we conclude that $(G_{N-1},q,Q^{\frac{1}{p}})$ is admissible extension triple and Theorem \ref{dual-ground-state-intro} again yields the existence of a dual bound state solution of \eqref{eqn:intro_helmholtz}. \\
If $2 \leq k \leq N-2$ and $p \in \left(\mu_{N,k,\alpha},\frac{2N}{N-2}\right)$  again a case distinction shows that $\mu_{N,k,\alpha} = \max \{ \frac{2N}{N-1} \, \frac{2 \lambda}{\lambda+2}, 2\}$, where $\lambda:=\lambda_{N,k,\alpha}$ is given in Theorem~\ref{theo:intro1-alpha-equals-beta}.  Consequently, by Theorem~\ref{theo:intro1-alpha-equals-beta}, we may fix $q \in (\mu_{N,k,\alpha},p)$ with $\max\{\frac{2N}{N-1} \, \frac{2 q}{q+2}, 2\}< p< \frac{2N}{N-2}$ and 
the property that $(G_k,q,\mathds{1}_{L_\alpha})$ is an admissible extension triple. As above, it follows that also $(G_k,q,Q^{\frac{1}{p}})$ is an admissible extension triple. Again, Theorem~\ref{dual-ground-state-intro} applies and yields that (\ref{eqn:intro_helmholtz}) admits a nontrivial dual bound state solution. Thus the claim holds in this case as well.
\end{proof}

\begin{remark}
We note that Corollary \ref{dual-ground-state-intro-cor-1} extends to the case where $L_\alpha$ is replaced by the more general class of sets $L_{\alpha,\beta}$ considered in Theorem~\ref{theo:alpha-diff-beta-section}. For this, one has to additionally assume $\beta> \frac{1}{N-1}$ if $k=1$.  Then the statement of Corollary \ref{dual-ground-state-intro-cor-1} holds with $\mu_{N,1,\beta}$. If $k = N-1$ the statement holds with the same value $\mu_{N,N-1,\alpha}$. For $2 \leq k \leq N-2$ the value $\mu_{N,k,\alpha}$ needs to be replaced by $\max\left\lbrace\frac{2N}{N-1}\frac{2\lambda}{\lambda + 2},2\right\rbrace$, where now $\lambda = \lambda_{N,k,\alpha,\beta}$ is given in (\ref{eq:def-lambda-N-k-alpha-beta})
\end{remark}

\newpage

\end{document}